\theoremstyle{plain}
\newtheorem{theorem}{Theorem}[section]
\newtheorem{proposition}[theorem]{Proposition}
\newtheorem{lemma}[theorem]{Lemma}
\theoremstyle{definition}
\newtheorem{definition}[theorem]{Definition}
\theoremstyle{remark}
\newtheorem{remark}[theorem]{Remark}
\newcommand{\R}{\mathbb{R}}
\newcommand{\E}{\mathcal{E}}
\newcommand{\C}{\mathbb{C}}
\newcommand{\e}{\mathrm{E}}
\newcommand{\G}{\mathcal{G}}
\newcommand{\T}{\mathcal{T}}
\newcommand{\di}{\mathrm{d}}
\newcommand{\A}{\mathcal{A}}
\newcommand{\HH}{\mathcal{H}}
\newcommand{\Om}{\Omega}
\newcommand{\F}{\mathcal{F}}
\newcommand{\J}{\mathcal{J}}
\newcommand{\p}{\mathrm{P}}
\newcommand{\D}{\mathcal{D}}
\newcommand{\TOL}{\mathit{Tol}}
\newcommand{\N}{\mathbb{N}}
\newcommand{\coloneq }{:=}
\newcommand{\abs}[1]{\lvert #1 \rvert}
\newcommand{\bigabs}[1]{\bigl\lvert #1 \bigr\rvert}
\newcommand{\norm}[1]{\lVert #1 \rVert}
\newcommand{\bignorm}[1]{\bigl\lVert #1 \bigr\rVert}
\newcommand{\dd}{\mathrm{d}}
\newcommand{\tforall}{\quad \text{for all }}
\DeclareMathOperator*{\argmin}{arg\,min}
\numberwithin{equation}{section}
\begin{document}
		
\title[Consistent Approximation of Phase-Field Models]{Consistent Finite-Dimensional Approximation\\of Phase-Field Models of Fracture}

\author[S. Almi]{Stefano Almi}
\address{Fakult\"at für Mathematik, TUM\\Boltzmannstr. 3\\85748 Garching bei M\"unchen, Germany} \email{stefano.almi@ma.tum.de}

\author[S. Belz]{Sandro Belz}
\address{Fakult\"at für Mathematik, TUM\\Boltzmannstr. 3\\85748 Garching bei M\"unchen, Germany} \email{sandro.belz@ma.tum.de}

\keywords{Ambrosio-Tortorelli functional; phase field damage; quasi-static evolution; critical points; numerical consistency.}

\subjclass[2000]{49M25,  	
	49J40}  	
	
\begin{abstract}
	In this paper we focus on the finite-dimensional approximation of quasi-static evolutions of critical points of the phase-field model of brittle fracture. In a space discretized setting, we first discuss an alternating minimization scheme which, together with the usual time-discretization procedure, allows us to construct such finite-dimensional evolutions. Then, passing to the limit as the space discretization becomes finer and finer, we prove that any limit of a sequence of finite-dimensional evolutions is itself a quasi-static evolution of the phase-field model of fracture. In particular, our proof shows for the first time the consistency of numerical schemes related to the study of fracture mechanics and image processing.
\end{abstract}

\maketitle

\section{Introduction}\label{Introduction}

In this paper we are interested in the study of convergence of numerical schemes for quasi-static evolution of brittle fractures in elastic bodies. We focus on the phase-field (or damage) approximation of fracture studied by Bourdin, Francfort, and Marigo in~\cite{Bourdin2000,Bourdin2007a,Bourdin2008}, and first introduced by Ambrosio and Tortorelli in~\cite{Ambrosio1990a,Ambrosio1992} in the framework of image processing. 

In a planar setting, given an open bounded subset~$\Om$ of~$\R^{2}$ with Lipschitz boundary~$\partial\Om$, we deal with an energy functional of the form
\begin{equation}\label{AmbTort}
	\J_\varepsilon (u,v) \coloneq\frac{1}{2}\int_{\Omega} (v^2 + \eta_\varepsilon) \abs{\nabla u}^2 \, \dd x + \kappa \int_{\Omega} \varepsilon \abs{\nabla v}^2 \, \dd x+ \kappa\int_{\Om} \frac{(1-v)^2}{4\varepsilon} \, \dd x\,,
\end{equation}
where $\varepsilon$ and $\eta_{\varepsilon}$ are two small positive parameters, $u\in H^{1}(\Om)$ stands for the displacement field, $v\in H^{1}(\Om;[0,1])$ denotes the damage variable, and the positive constant~$\kappa$ may be interpreted as the toughness of the material, which we assume to be equal to one for the following discussion. From a physical point of view, the variable~$v$ in~\eqref{AmbTort} takes into account how damaged the elastic body is, so that, for $x\in\Om$, $v(x)=0$ means that the damage is complete (fracture) at~$x$, while $v(x)=1$ means that the material is perfectly intact at~$x$.

In~\cite{Ambrosio1990a,Ambrosio1992} it has been shown that choosing $0 < \eta_\varepsilon \ll \varepsilon$ and letting $\varepsilon\to0$, the functional~$\J_{\varepsilon}$ $\Gamma$-converges to
\begin{equation} \label{FrancfortFunc}
	\G(u)\coloneq\frac{1}{2}\int_{\Om} \abs{\nabla u}^2 \,\di x + \HH^{1} (S_{u}) \,,
\end{equation}
defined on~$GSBV(\Om)$, the space of generalized special function of bounded variation (for the theory of such spaces see, for instance,~\cite{Ambrosio2000}). In~\eqref{FrancfortFunc},~$\HH^{1}$ denotes the $1$-dimensional Hausdorff measure and~$S_{u}$ stands for the approximate discontinuity set of~$u$. In the mathematical model of fracture (see, e.g.,~\cite{Bourdin2008}), the functional~\eqref{FrancfortFunc} represents the energy of an elastic body~$\Om$ subject to an antiplanar displacement~$u$ and with a crack~$S_{u}$.

In view of such a convergence result, the phase-field functional~\eqref{AmbTort} has been widely and successfully used in numerical simulations of crack growth processes (see, for instance~\cite{Artina2015,Bourdin2000,Bourdin2007a,Bourdin2008,Burke2010,Burke2011}).

In the framework of numerical approaches to fracture mechanics, our interest is in the proof of consistency (or analysis of convergence) of some numerical schemes used in the study of the crack growth process. In particular, our goal is to prove the existence of quasi-static evolutions for the phase-field model~\eqref{AmbTort} as limits of evolutions obtained in a space-discretized setting. It is indeed clear that any numerical simulation based on~\eqref{AmbTort} gives as an outcome only \emph{finite-dimensional} approximations of an evolution of the phase-field variable (see, e.g.,~\cite{Bourdin2000,Bourdin2007a,Bourdin2008}). This is due to the fact that, in order to implement some kind of algorithm, a discretization of the functional space~$H^{1}(\Om)$ is needed. Having this in mind, our contribution is, roughly speaking, the following: we show that we can construct a quasi-static evolution of critical points of the phase-field model of fracture~\eqref{AmbTort} as a limit of \emph{finite-dimensional quasi-static evolutions} obtained in a discretized~$H^{1}$-framework. Clearly, the limit process is performed as the function space discretization becomes finer and finer. 

Up to our knowledge, this paper provides the first proof of consistency of numerical methods for such evolutions, going beyond the empirical consistency checks in~\cite{Artina2015b}. Moreover, we needed to fuse classical methods of PDE discretization, such as FEM (Finite Element Method) and their typical quasi-interpolating estimates, together with some variational techniques, going far beyond the usual linear setting where, e.g., FEM are employed.

In contrast to~\cite{Giacomini2005}, or even to the more general framework of~\cite{Mielke2008}, where the study of the limit of evolutions along global minimizers is performed, essentially, with $\Gamma$\nobreakdash-convergence techniques, and similarly to the works of Braides and coauthors in~\cite{Braides2011,Braides2008}, here we develop results of consistency for evolutions along critical points, which are both more realistic and technical novel.  

We anticipate here that all the results we are going to discuss are still valid in the vectorial case, i.e., when considering the functional
\begin{displaymath}
\mathcal{I}_{\varepsilon}(u,v)\coloneq\frac{1}{2}\int_{\Om}(v^{2}+\eta_{\varepsilon})\C\e u{\,\cdot\,}\e u\,\di x+\int_{\Omega}\varepsilon\abs{\nabla v}^2\,\dd x + \int_{\Om}\frac{(1-v)^2}{4\varepsilon} \, \dd x\,,
\end{displaymath}
where $u\in H^{1}(\Om;\R^{2})$, $v\in H^{1}(\Om)$, and~$\C$ is the usual elasticity tensor. For the sake of simplicity, we decided to present here in details only the scalar setting~\eqref{AmbTort}.

In order to be more precise in the discussion of our result, let us briefly present the quasi-static evolution problem we want to tackle in this work. For notational convenience, let us fix the parameters~$\varepsilon=\tfrac{1}{2}$ and $\eta_{\varepsilon}=\eta>0$ and let us drop the subscript~$\varepsilon$ in~\eqref{AmbTort}, so that we consider the functional
\begin{equation}\label{AmbTort2}
    \J(u,v) \coloneq\frac{1}{2} \int_{\Omega} (v^2 + \eta) \abs{\nabla u}^2 \, \dd x + \frac{1}{2}\int_{\Omega}\big( \abs{\nabla v}^2 +(1-v)^2\big) \, \dd x
\end{equation}
for $u,v\in H^{1}(\Om)$.
 Given $T>0$, we assume that the evolution of the elastic body~$\Om$ is driven by the energy functional~\eqref{AmbTort2} and by a time-dependent Dirichlet boundary datum $w\in W^{1,2}([0,T];H^{1}(\Om))$. In this context, a quasi-static evolution is described by the pair of functions $(u,v)\colon[0,T]\to H^{1}(\Om)\times H^{1}(\Om)$ standing for displacement and damage, respectively, and satisfying the following conditions (we refer to Definition~\ref{quasi-staticcontinuous} for a precise statement):
\begin{enumerate}
	\item \label{irreversibility} \emph{Irreversibility}: $0\leq v(t)\leq v(s)\leq 1$ a.e. in~$\Om$ for every $0\leq s\leq t\leq T$;
	
	\item \label{stabilCond} \emph{Stability}: for every $t\in[0,T]$, the pair $(u(t),v(t))$ is a ``critical point'' of the energy functional~$\J$ in the class of pairs $(u,v)\in H^{1}(\Om)\times H^{1}(\Om)$ such that $u=w(t)$ on~$\partial\Om$ and $v\leq v(t)$ a.e. in~$\Om$;
	
	\item \label{energyInequ}\emph{Energy-dissipation inequality}: for every $t\in[0,T]$
	\begin{equation*}
		\J(u(t),v(t))\leq\J(u(0),v(0))+\int_{0}^{t}\int_{\Om}(v^{2}(s)+\eta)\nabla{u(s)}\cdot\nabla{\dot{w}(s)}\,\di x\,\di s\,,
	\end{equation*}
	where $\dot{w}$ denotes the time derivative of $w$.
\end{enumerate}
We mention that this notion of evolution is often referred to as \emph{local energetic evolution}. See, e.g.,~\cite{Mielke2005} for further discussions on the topic.

The irreversibility property~\eqref{irreversibility} means that the damage process is unidirectional, in the sense that once the elastic body~$\Om$ is damaged, i.e.,~$v<1$ in a subset of~$\Om$, it can not be repaired, not even partially. We notice that this is the natural counterpart of the irreversibility of brittle fracture, which states that once a crack is created, it can not be closed anymore during the evolution process.

The stability condition~\eqref{stabilCond}, discussed in details in Section~\ref{Discrete}, can be mathematically rephrased, roughly speaking, as
\begin{equation}\label{Intro1}
	\partial_{(u,v)}\J(u(t),v(t))\leq0\,,
\end{equation}
where~$\partial_{(u,v)}$ denotes the partial derivative with respect to the pair of variables~$(u,v)$, and the inequality is due to the irreversibility constraint discussed above. As we will see in Section~\ref{Discrete}, inequality~\eqref{Intro1} can be splitted in
\begin{equation}\label{Intro2}
	\partial_{u}\J(u(t),v(t))=0\qquad\text{and}\qquad\partial_{v}\J(u(t),v(t))\leq0\,,
\end{equation}
or, which is equivalent because of the separate convexity of~$\J$ with respect to~$u$ and~$v$,
\begin{alignat}{2}
	\J(u(t),v(t)) &\leq\J(u,v(t))\qquad &&\text{for every $u\in H^{1}(\Om)$ with $u=w(t)$ on $\partial\Om$},\label{Intro3}\\
	\J(u(t),v(t)) &\leq \J(u(t),v)\qquad &&\text{for every $v\in H^{1}(\Om)$ with $v\leq v(t)$ a.e. in $\Om$}.\label{Intro4}
\end{alignat}
We notice that conditions~\eqref{Intro3}\nobreakdash--\eqref{Intro4} are not equivalent to the global stability property
\begin{equation*}
	\J(u(t),v(t))\leq\J(u,v)
\end{equation*}
for every pair $(u,v)\in H^{1}(\Om) \times H^1 (\Omega; [0,1])$ such that $u=w(t)$ on~$\partial\Om$ and $v\leq v(t)$ a.e.~in~$\Om$. For this reason,~\eqref{Intro1}\nobreakdash--\eqref{Intro4} could be referred to as \emph{local stability} properties, since they involve the local behavior of the energy functional~$\J$ close to the pair~$(u(t),v(t))$.

Finally, the energy-dissipation inequality~\eqref{energyInequ} is due to the lack of $1$-homogeneous term in the original Francfort-Marigo model. In~\cite{Knees2015}, the authors have been able to recover an energy balance for the phase-field model described by~\eqref{AmbTort2} thanks to a time reparametrization technique (see also~\cite{Mielke2012}). However, it resulted to be difficult to adapt such a trick to our finite-dimensional to continuum limit.

Following the main steps of numerical schemes, in order to construct a quasi-static evolution satisfying~\eqref{irreversibility}\nobreakdash--\eqref{energyInequ} we first discretize the function space~$H^{1}(\Om)$ and define the discrete counterpart of the functional~$\J$. More precisely, for every value of the mesh parameter $h\in\mathbb{N}\setminus\{0\}$ we construct a uniform triangulation~$\T_{h}$ of~$\Om$ (see~\eqref{K+}--\eqref{triangulation} for more details), we define the finite-dimensional space
\begin{equation}\label{Intro5.1}
	\F_{h}\coloneq\{u\in H^{1}(\Om):\,\text{$u$ is affine on $K$ for every $K\in\T_{h}$}\}\,,
\end{equation}
and we set, for every $u,v\in\F_{h}$,
\begin{equation}\label{Intro6}
	\J_{h}(u,v)\coloneq\frac{1}{2}\int_{\Om} \bigl( \p_{h}(v^{2})+\eta \bigr) \abs{\nabla u}^{2} \,\di x+\frac{1}{2}\int_{\Om}|\nabla{v}|^{2}\,\di x+\frac{1}{2}\int_{\Om}\p_{h}\bigl( (1-v)^{2} \bigr) \,\di x\,,
\end{equation}
where $\p_{h}\colon C(\overline{\Om})\to\F_{h}$ is the Lagrangian interpolation operator. In particular, the choice of the uniform triangulation~$\T_{h}$ is done here to simplify the exposition. We mention that we could also consider more general triangulations, taking into account the standard requirements arising from interpolation estimates (for more details see, e.g.,~\cite{Quarteroni1994}).

In the finite-dimensional framework described above, for every~$h\in\mathbb{N}\setminus\{0\}$ we construct a \emph{finite-dimensional quasi-static evolution} driven by the energy functional~$\J_h$ in~\eqref{Intro6} and satisfying the discrete counterpart of conditions~\eqref{irreversibility}\nobreakdash--\eqref{energyInequ} (see Definition~\ref{quasi-static} and Theorem~\ref{thm1}). The algorithm used for this purpose is a fusion of the one developed in~\cite{Artina2016} together with the alternating minimization of~\cite{Bourdin2000}. In particular, besides the usual time-discretization procedure, typical in the study of many rate-independent processes (see, for instance,~\cite{Mielke2005}), at each time step $t^{k}_{i}\coloneq\tfrac{iT}{k}$, $k\in\mathbb{N}\setminus\{0\}$, $i\in\{1,\ldots,k\}$, we construct a critical point of the energy~$\J_{h}$ at time~$t^{k}_{i}$ by solving the incremental minimum problems
\begin{align}
	&\min\, \{ \J_h (u, v_{j-1}) : u\in \F_h, u=w(t^{k}_{i}) \text{ on } \partial \Omega\}\,,\label{Intro7} \\
	&\min \, \{ \J_h (u_{j}, v) : v\in \F_h, v \leq v_{j-1} \text{ in } \Om \}\,,\label{Intro8}
\end{align}
where we have set, as initial conditions, $u_{0} \coloneq u^{h}_{h}(t^{k}_{i-1})$ and $v_{0}\coloneq v^{k}_{h}(t^{k}_{i-1})$. Denoting by~$u_{j}$ and~$v_{j}$ the solutions to~\eqref{Intro7} and~\eqref{Intro8}, respectively, we show in Proposition~\ref{prop2} that the pair~$(u_{j},v_{j})$ converges in $\F_{h}\times\F_{h}$ to a critical point of~$\J_{h}$, which we denote by~$(u^{k}_{h}(t^{k}_{i}),v^{k}_{h}(t^{k}_{i}))$.

In the proof of Theorem~\ref{thm1} we exploit the above algorithm in order to obtain a finite-dimensional quasi-static evolution $(u_{h},v_{h})\colon[0,T]\to \F_{h}\times\F_{h}$ in the sense of Definition~\ref{quasi-static}.

The last step of our construction is then the passage to the limit as the triangulation~$\T_{h}$ becomes finer and finer, that is, as the mesh parameter~$h$ tends to~$+\infty$. This is indeed the subject of the proof of Theorem~\ref{mainthm}, where we show that any limit of a sequence of finite-dimensional quasi-static evolutions is a quasi-static evolution in the sense of~\eqref{irreversibility}\nobreakdash--\eqref{energyInequ} above (see also Definition~\ref{quasi-staticcontinuous}). From a numerical viewpoint, this shows that the numerical results, obtained through a sort of finite-dimensional implementation of the damage model~\eqref{AmbTort} and~\eqref{AmbTort2} and described here by the time-continuous finite-dimensional evolutions~$(u_{h}(t),v_{h}(t))$, are actually close to the ``theoretical'' quasi-static evolutions~$(u(t),v(t))$ given by~\eqref{irreversibility}\nobreakdash--\eqref{energyInequ}. Moreover, we notice that the method we exploit to prove Theorems~\ref{mainthm} and~\ref{thm1} is also suitable for applications and numerical simulations, which, in particular, will be performed in Section~\ref{Numerics}.

In conclusion, we stress once again that the problem of existence of a quasi-static evolution for the phase-field model~\eqref{AmbTort} has been already tackled in various papers (see, for instance,~\cite{Giacomini2005,Knees2015,Negri2014}). In particular, in~\cite{Knees2015} an existence result of quasi-static evolution for the damage model via critical points of the energy functional~\eqref{AmbTort} has been achieved using, in a space-continuous setting, an alternate minimization scheme similar to the one described above. In~\cite{Negri2014}, instead, the convergence scheme is based on a local minimization procedure with respect to the damage variable~$v$. In~\cite{Giacomini2005} the evolution problem has been addressed in the setting of global minimizers, giving particular emphasis to the connection between the notions of quasi-static evolution in the phase-field model and in the variational ``sharp interface'' model of fracture (see, e.g.,~\cite{Francfort1998}). In view of these previous works, what we claim is new in our paper is not the existence result itself, but rather the technique used to construct an evolution, which is based on the algorithm given by~\eqref{Intro7} and~\eqref{Intro8} and which has been frequently used in numerical implementations (see~\cite{Artina2015,Bourdin2000,Bourdin2007,Bourdin2007a}). 

\subsection*{Plan of the paper}
The paper is organized as follows: in Section~\ref{Discrete} we present the evolution problem in full details, giving the definition of quasi-static evolution for the phase-field model (see Definition~\ref{quasi-staticcontinuous}) and stating the main result (Theorem~\ref{mainthm}). Then, we start discussing our discretization algorithm and, eventually, in Section~\ref{auxpb} we discuss the alternate minimization scheme which is at the core of our approximation. In Sections~\ref{qsfindim} and~\ref{sectionevolution} we prove Theorems~\ref{thm1} and~\ref{mainthm}, respectively. Finally, in Section~\ref{Numerics} we present some numerical simulations which exploit the alternate minimization algorithm discussed in this paper.

\section{Setting of the problem}\label{Discrete}
In this section, we describe the problem setting and introduce the main notation of the paper. We first start with the space-continuous notion, and in the second part of the section we discuss the space-discrete setting.

\subsection*{Space-continuous setting}
As already mentioned in the Introduction, we are studying quasi-static evolutions in the framework of phase-field approximation of brittle fractures in elastic bodies (for more details see, e.g.,~\cite{Ambrosio1990a,Ambrosio1992,Fonseca2007,Francfort1998}). Since the aim of this paper is to show a new constructive approach to the evolution problem based on a space discretization procedure, in order to keep the notations as simple as possible we focus here on a two dimensional model. In particular, we consider as a reference configuration the unit square $\Om\coloneq(0,1)^{2}$ in~$\R^{2}$. We believe that this is not a serious restriction and also evolutions in three dimensions can be similarly approached.

Once some~$\eta>0$ is fixed, we define the \emph{phase-field stored elastic energy} as
\begin{equation}\label{elastic}
	\E(u,v) \coloneq \frac{1}{2}\int_{\Om} (v^2+\eta) \abs{\nabla u}^{2}\,\di x \,,
\end{equation}
where $u\in H^{1}(\Om)$ denotes the antiplanar displacement and $v\in H^{1}(\Om)$ stands for the phase-field (or damage) variable. In particular, from~\eqref{elastic} we deduce that the elastic behavior of~$\Om$ depends pointwise on how damaged the body is, and, due to the presence of the positive parameter~$\eta$, the damage is never complete, in the sense that the elastic body~$\Om$ is always able to store a positive amount of elastic energy depending on the displacement~$u$. We also recall that the phase-field~$v$ is usually constrained to take values in the interval~$[0,1]$, where, for $x\in\Om$, $v(x)=0$ means that the elastic body~$\Om$ is experiencing a maximal damage in~$x$, while $v(x)=1$ means that the material is perfectly sound at~$x$. In order to avoid some technical issues related to the discrete setting described in the second part of this section, we simply assume~$v$ to belong to~$H^{1}(\Om)$. We will see how the above constraint can be naturally enforced in the space-discrete approximation of the evolution problem. We refer to Proposition~\ref{Prop1} for more details.

As usual in the phase-field approximation, we add to the stored elastic energy~\eqref{elastic} a dissipative term~$\D(v)$ which depends only on the damage $v\in H^{1}(\Om)$, namely,
\begin{equation}\label{D}
	\D(v) \coloneq \frac{1}{2} \int_{\Om} \bigr(\abs{\nabla v}^{2} + (1-v)^{2}\bigl) \,\di x \,.
\end{equation}
In the sense of $\Gamma$-convergence,  the dissipation functional $\D$ approximates, in the language of fracture mechanics, the energy dissipated by the crack production, as it has been shown in~\cite{Ambrosio1990a,Ambrosio1992}. 
 
We are now in a position to introduce the \emph{total phase-field energy} of the system as the sum of~\eqref{elastic} and~\eqref{D}: for every $u,v\in H^{1}(\Om)$, we simply set
\begin{equation}\label{J}
	\J(u,v)\coloneq \E(u,v) + \D(v) \,.
\end{equation}

As usual, the evolution problem will be driven by a time-dependent forcing term. In this case, given a time horizon $T>0$, we assume that the elastic body $\Om$ is subject to a Dirichlet boundary datum $w\in W^{1,2}([0,T]; H^{1}(\Om))$, so that, for every $t\in[0,T]$, the set of admissible displacement $\A(w(t))$ is defined by
\begin{equation}\label{A}
	\A(w(t)) \coloneq  \{u \in H^1 (\Om) : u=w(t) \text{ on } \partial\Om \} \,,
\end{equation}
where the equality has to be intended in the trace sense. The notation~\eqref{A} will be adopted also for functions $w\in H^{1}(\Om)$ not depending on time. 

In this context, a quasi-static evolution for the damage model is expressed by a pair displacement-damage $(u,v)\colon[0,T]\to H^{1}(\Om)\times H^{1}(\Om; [0,1])$. The first natural condition we want to impose is the so-called \emph{irreversibility} of the phase-field variable. Namely, the function $t\mapsto v(t)$ has to be non-increasing. This means that once the elastic body~$\Om$ is damaged, it can not be repaired, not even partially.

The second property a quasi-static evolution has to satisfy is a \emph{stability} condition. In our case, to be stable at time~$t$ means that the pair~$(u(t),v(t))$ is a critical point of the energy~\eqref{J} in the class of pairs $(u,v) \in H^1(\Omega) \times H^1(\Omega;[0,1])$ with $u\in \A (w(t))$ and $v\leq v(t)$ a.e.~in~$\Omega$. Since~$\J$ is Fréchet differentiable on~$H^1(\Om) \times ( H^1(\Om) \cap L^\infty (\Om) )$ (see~\cite[Proposition~1.1]{Burke2010}) with
\begin{align}
	\partial_{(u,v)}\J(u,v)[\varphi, \psi] = {}& \int_{\Om} ( v^{2} + \eta) \nabla u \cdot \nabla \varphi \,\di x + \int_{\Om} v \psi \abs{\nabla u}^{2} \,\di x \nonumber \\
	& + \int_{\Om} \nabla v \cdot \nabla \psi \,\di x - \int_{\Om} (1 - v) \psi \,\di x\,, \label{FrechetDeriv}
\end{align}
for every $u\in H^1(\Omega)$, $v\in H^1(\Omega) \cap L^\infty(\Omega)$, $\varphi \in C_c^\infty(\Omega)$, $\psi\in C^\infty(\overline{\Om})$, the stability condition can be written as
\begin{equation}\label{Frechet2}
	\partial_{(u,v)}\J\bigl( u(t),v(t) \bigr)[\varphi, \psi]\geq 0
\end{equation}
for every $\varphi\in C^{\infty}_{c}(\Om)$ and every $\psi\in C^{\infty}(\overline{\Om})$ with $\psi\leq0$.

\begin{remark}
	We notice that the inequality in~\eqref{Frechet2} and the restriction to test functions~$\psi \leq 0$ arise from the irreversibility condition of the damage variable~$v(t)$ discussed above.
\end{remark}

By the structure of the derivative of~$\J$~\eqref{FrechetDeriv}, inequality~\eqref{Frechet2} can be simply rephrased in terms of the following inequalities:
\begin{align}
	0 &= \partial_{u}\J \bigl( u(t),v(t) \bigr) [\varphi] = \int_{\Om} \bigl(  v^{2}(t) + \eta \bigr) \nabla  u(t)  \cdot \nabla{\varphi}\,\di x \label{CritPointu} \\
	0 &\leq \partial_{v}\J \bigl( u(t),v(t) \bigr) [\psi] \nonumber\\
	&= \int_{\Om}  v(t) \psi \bigabs{\nabla  u(t)}^{2} \,\di x + \int_{\Om} \nabla  v(t) \cdot \nabla \psi \,\di x - \int_{\Om} \bigl( 1 -  v(t) \bigr) \psi \,\di x \,, \label{CritPointv}
\end{align}
 for every $\varphi \in C^{\infty}_{c}(\Om)$ and $\psi\in C^{\infty}(\overline{\Om})$ with $\psi\leq0$.

\begin{remark}
	The right-hand sides of~\eqref{CritPointu} and~\eqref{CritPointv} represent the Gateaux derivatives in the direction of $u$ and $v$, respectively.

	We also notice that once we know that inequalities~\eqref{CritPointu} and~\eqref{CritPointv} are satisfied for every test functions $\varphi\in C^{\infty}_{c}(\Om)$ and $\psi\in C^{\infty}(\overline{\Om})$ with $\psi\leq0$ in~$\Om$, by density and truncation argument it is easy to see that they hold also for $\varphi\in H^{1}_{0}(\Om)$ and $\psi\in H^{1}(\Om)$, $\psi\leq0$ a.e. in~$\Om$.
\end{remark}

Finally, by the separate convexity of~$\J$ with respect to the variables~$u$ and~$v$, from formulas~\eqref{CritPointu} and~\eqref{CritPointv} we derive the actual stability condition, given in terms of minimum problems: for every~$t\in[0,T]$,~$u(t)$ minimizes~$\J(\cdot,v(t))$ in the class~$\A(w(t))$, while~$v(t)$ minimizes~$\J(u(t),\cdot)$ in the class of functions~$v\in H^{1}(\Om)$ such that $v\leq v(t)$ a.e.~in~$\Om$.

This leads us to the following definition of quasi-static evolution for the phase-field model via critical points of the energy~$\J$ in~\eqref{elastic}--\eqref{J}.
\begin{definition}\label{quasi-staticcontinuous}
	Let $T>0$ and $w\in W^{1,2}([0,T]; H^{1}(\Om))$. We say that a pair $( u, v)\colon [0,T]\to H^{1}(\Om) \times H^{1}(\Om)$ is a \emph{quasi-static evolution (of critical points)} if the following conditions are satisfied:
	\begin{enumerate}
		\item \emph{Irreversibility}: $0\leq v(t)\leq v(s)\leq1$ a.e.~in~$\Om$ for every $0\leq s\leq t\leq T$;
		
		\item \emph{Stability}: for every $t\in[0,T]$
		\begin{alignat}{2}
			\J \bigl( u(t), v(t) \bigr) &\leq \J \bigl( u, v(t) \bigr) \quad && \tforall u\in \A\bigl(w(t) \bigr), \label{contstabu}\\
			\J \bigl( u(t), v(t)\bigr) &\leq \J \bigl( u(t), v \bigr) \quad &&\tforall v\in H^{1}(\Om), v\leq v(t) \text{ a.e. in } \Om\,. \label{contstabv}
		\end{alignat}
		
		\item \emph{Energy-dissipation inequality}: for every $t\in[0,T]$
		\begin{equation}\label{energyineq2}
			\J \bigl(u(t),v(t) \bigr) \leq \J(u(0),v(0)) + \int_{0}^{t} \int_{\Om} (v^{2}(s)+\eta) \nabla u(s) \cdot \nabla \dot{w}(s) \,\di x \,\di s\,.
		\end{equation}
	\end{enumerate}
\end{definition}

We can now state the main existence result of the paper, which will be proved in Section~\ref{sectionevolution}.
\begin{theorem}\label{mainthm}
	Let $T>0$, $w\in W^{1,2}([0,T]; H^{1}(\Om))$, and $u_{0},\,v_{0}\in H^{1}(\Om)$ be such that $u_{0}\in \A (w(0))$ and $0\leq v_{0}\leq 1$ a.e.~in~$\Om$. Assume that the pair $(u_{0},v_{0})$ satisfies the stability conditions at time $t=0$:
	\begin{align}
		\J(u_{0},v_{0}) &\leq \J(u,v_{0}) \quad \tforall u\in \A(w(0)), \label{stabu0}\\
		\J(u_{0},v_{0}) &\leq \J(u_{0},v) \quad \tforall v\in H^{1}(\Om), \text{ such that } v\leq v_{0} \text{ a.e. in } \Om. \label{stabv0}
	\end{align}
	Then, there exists a quasi-static evolution $(u, v) \colon [0,T]\to H^{1}(\Om) \times H^{1}(\Om)$ with $u(0)=u_{0}$ and $v(0)=v_{0}$.
\end{theorem}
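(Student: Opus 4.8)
The plan is to realise a quasi-static evolution as in Definition~\ref{quasi-staticcontinuous} as a limit, along a subsequence $h\to+\infty$, of the finite-dimensional evolutions produced by Theorem~\ref{thm1}. The first preliminary step is to construct, for every $h$, well-prepared discrete initial data $(u_{0}^{h},v_{0}^{h})\in\F_{h}\times\F_{h}$ — discrete stable and converging strongly in $H^{1}(\Om)\times H^{1}(\Om)$ to $(u_{0},v_{0})$ — which is possible precisely because $(u_{0},v_{0})$ is stable at $t=0$ by~\eqref{stabu0}--\eqref{stabv0}; I would combine a nodal (or Scott--Zhang) interpolation of $(u_{0},v_{0})$, which automatically keeps $0\le v_{0}^{h}\le1$, with the discrete minimum problems defining $u_{0}^{h}$ in $\F_{h}\cap\A(w_{h}(0))$ and the alternating scheme of Proposition~\ref{prop2}. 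Here the Dirichlet data are discretised by $w_{h}(t)$, the Lagrange interpolant $\p_{h}w(t)$ of $w(t)$ (or a quasi-interpolant, chosen so that $w_{h}\to w$ in $W^{1,2}([0,T];H^{1}(\Om))$). Theorem~\ref{thm1} then yields finite-dimensional quasi-static evolutions $(u_{h},v_{h})\colon[0,T]\to\F_{h}\times\F_{h}$ with $(u_{h}(0),v_{h}(0))=(u_{0}^{h},v_{0}^{h})$.

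Next come a priori bounds and compactness in time. A Gronwall argument applied to $t\mapsto\J_{h}(u_{h}(t),v_{h}(t))$ in the discrete energy--dissipation inequality, using $w_{h}\in W^{1,2}([0,T];H^{1}(\Om))$, gives $\sup_{t}\J_{h}(u_{h}(t),v_{h}(t))\le C$ uniformly in $h$; the $\eta$-coercivity of the elastic part, the terms $\tfrac12\int_{\Om}\abs{\nabla v}^{2}\,\di x+\tfrac12\int_{\Om}\p_{h}((1-v)^{2})\,\di x$ of the damage part, Poincaré's inequality together with the boundary condition then bound $\sup_{t}(\norm{u_{h}(t)}_{H^{1}}+\norm{v_{h}(t)}_{H^{1}})$, and $0\le v_{h}(t)\le1$ by Proposition~\ref{Prop1}. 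Since $t\mapsto v_{h}(t)$ is non-increasing a.e.\ in $\Om$ by discrete irreversibility, a Helly-type selection theorem for monotone functions valued in the reflexive space $H^{1}(\Om)$ (diagonal extraction on a countable dense set of times, plus the pointwise squeezing coming from monotonicity) produces a subsequence, not relabelled, and a non-increasing $v\colon[0,T]\to H^{1}(\Om)$ with $v_{h}(t)\rightharpoonup v(t)$ in $H^{1}(\Om)$ for every $t$; by Rellich $v_{h}(t)\to v(t)$ in $L^{2}(\Om)$, and since $\norm{\p_{h}(v_{h}^{2}(t))-v_{h}^{2}(t)}_{L^{1}}\le Ch^{2}\norm{\nabla v_{h}(t)}_{L^{2}}^{2}\to0$ together with $0\le\p_{h}(v_{h}^{2}(t))\le1$, one gets $\p_{h}(v_{h}^{2}(t))\to v^{2}(t)$ in every $L^{p}(\Om)$, $p<\infty$. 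Passing to the limit in the discrete elastic Euler--Lagrange equation, tested with $\varphi_{h}=\p_{h}\varphi$, $\varphi\in C_{c}^{\infty}(\Om)$ (and then by density), identifies $u(t)\in\A(w(t))$ solving $\int_{\Om}(v^{2}(t)+\eta)\nabla u(t)\cdot\nabla\varphi\,\di x=0$; testing the discrete and the limit equations with $u_{h}(t)-w_{h}(t)$ and with $u(t)-w(t)$ upgrades this to $u_{h}(t)\to u(t)$ strongly in $H^{1}(\Om)$, for every $t$.

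It then remains to verify the three conditions of Definition~\ref{quasi-staticcontinuous}. Irreversibility passes to the limit at once. For stability at $t>0$: the $u$-minimality~\eqref{contstabu} follows from the limit Euler--Lagrange equation and the convexity of $\J(\cdot,v(t))$. For the $v$-minimality~\eqref{contstabv} I would pass to the limit in the discrete first-order inequality $\partial_{v}\J_{h}(u_{h}(t),v_{h}(t))[\p_{h}\psi]\ge0$ — valid for every $\psi\in C^{\infty}(\overline{\Om})$ with $\psi\le0$, since then $v_{h}(t)+s\,\p_{h}\psi\le v_{h}(t)$ is admissible for the discrete damage problem — using $u_{h}(t)\to u(t)$ in $H^{1}$, $v_{h}(t)\to v(t)$ in $L^{2}$, $\p_{h}\psi\to\psi$ in $H^{1}$, and interpolation estimates such as $\norm{\p_{h}(v_{h}(t)\,\p_{h}\psi)-v_{h}(t)\,\p_{h}\psi}_{L^{1}}\le Ch^{2}\norm{\nabla v_{h}(t)}_{L^{2}}\norm{\nabla\psi}_{L^{2}}\to0$; this gives $\partial_{v}\J(u(t),v(t))[\psi]\ge0$ for all smooth $\psi\le0$, hence by density for all $\psi\in H^{1}(\Om)\cap L^{\infty}(\Om)$ with $\psi\le0$, and since a truncation argument reduces~\eqref{contstabv} to competitors $v$ with $0\le v\le v(t)$ (for which $v-v(t)\in H^{1}(\Om)\cap L^{\infty}(\Om)$), the convexity of $\J(u(t),\cdot)$ yields~\eqref{contstabv}. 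Finally, for the energy--dissipation inequality~\eqref{energyineq2} I would take $\liminf_{h}$ in its discrete counterpart: on the left, weak $H^{1}$-lower semicontinuity of $v\mapsto\tfrac12\int_{\Om}\abs{\nabla v}^{2}\,\di x$, strong $L^{2}$-convergence of $v_{h}(t)$, strong $H^{1}$-convergence of $u_{h}(t)$, and the vanishing interpolation errors give $\liminf_{h}\J_{h}(u_{h}(t),v_{h}(t))\ge\J(u(t),v(t))$; on the right, for a.e.\ $s$ the inner integral $\int_{\Om}(\p_{h}(v_{h}^{2}(s))+\eta)\nabla u_{h}(s)\cdot\nabla\dot w_{h}(s)\,\di x$ converges to $\int_{\Om}(v^{2}(s)+\eta)\nabla u(s)\cdot\nabla\dot w(s)\,\di x$, while $\sup_{s}\norm{\nabla u_{h}(s)}_{L^{2}}\le C$ and $\dot w_{h}\to\dot w$ in $L^{2}([0,T];H^{1}(\Om))$ furnish the domination needed to pass the time integral to the limit. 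Since $u_{h}(0)\to u_{0}$ and $v_{h}(0)\to v_{0}$, the limit satisfies $u(0)=u_{0}$, $v(0)=v_{0}$, and stability at $t=0$ is precisely~\eqref{stabu0}--\eqref{stabv0}.

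The hard part is the passage to the limit through the Lagrangian interpolation operator $\p_{h}$, i.e.\ the control of the consistency error between $\J_{h}$ and $\J$ along the damage variables $v_{h}(t)$, of which one only has weak $H^{1}$-compactness. The decisive observation is that the relevant interpolation errors — such as $\p_{h}(v_{h}^{2})-v_{h}^{2}$ or $\p_{h}((1-v_{h})^{2})-(1-v_{h})^{2}$ — measure second derivatives of piecewise-quadratic functions that, elementwise, are exactly $\abs{\nabla v_{h}}^{2}$-type quantities, and are therefore absorbed by the uniform energy bound $\sup_{t,h}\norm{\nabla v_{h}(t)}_{L^{2}}\le C$: this is the point at which classical FEM quasi-interpolation estimates must be fused with the variational a priori bounds, and it is what makes the whole limiting procedure work despite the nonlinear coupling $v^{2}\abs{\nabla u}^{2}$. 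Two further non-routine ingredients are the Helly-type compactness for $H^{1}(\Om)$-valued, monotone-in-time functions, and the construction of the well-prepared discrete stable initial data, where the stability of $(u_{0},v_{0})$ is indispensable.
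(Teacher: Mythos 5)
Your overall strategy is the paper's: discretize the data, invoke Theorem~\ref{thm1} for each $h$, obtain uniform energy bounds, use a Helly-type selection for the monotone maps $t\mapsto v_h(t)$, extract for each $t$ a weak $H^1$ limit $u(t)$, upgrade to strong convergence of $u_h(t)$, pass the stability and the energy--dissipation inequality to the limit with the interpolation-error control of Lemma~\ref{lemma0} and dominated convergence. The only methodological difference is that you transfer stability through the discrete Euler--Lagrange equation and the first-order inequality $\partial_v\J_h(u_h(t),v_h(t))[\p_h\psi]\geq 0$ plus convexity and truncation, whereas the paper encapsulates this in the minimality-transfer Lemma~\ref{lemma2}, built on recovery competitors $\p_h(\varphi^k)+v_h$; these are equivalent implementations resting on the same ingredients (strong $H^1$ convergence of $u_h(t)$, $\p_h(v_h^2)\to v^2$ in $L^p$, density of smooth nonpositive test functions), so this part is fine.

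The step that does not hold up is your construction of ``well-prepared'' initial data: you claim that, \emph{because} $(u_0,v_0)$ is stable, one can produce discrete-stable pairs $(u_0^h,v_0^h)$ converging strongly in $H^1(\Om)\times H^1(\Om)$ to $(u_0,v_0)$ by running the alternating scheme of Proposition~\ref{prop2} on interpolants. Continuum stability is only a first-order condition and gives no such control: the alternating scheme started from an interpolant of $v_0$ converges to \emph{some} discrete critical point whose damage variable lies below that interpolant, and its limit points as $h\to+\infty$ are continuum critical points with damage $\leq v_0$, but nothing forces them to coincide with $(u_0,v_0)$ --- the scheme may settle on a strictly more damaged state, in which case $v_h(0)\not\to v_0$ and the required initial condition $v(0)=v_0$ is lost. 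This is exactly why Definition~\ref{quasi-static} demands the discrete stability \eqref{stab1}--\eqref{stab2} only on $(0,T]$: the paper takes as discrete initial data mere bound-preserving approximations ($u_{0,h}\in\A_h(w_h(0))$, $0\leq v_{0,h}\leq 1$, strong $H^1$ convergence, via \cite{Quarteroni1994}), with no stability at $t=0$, and recovers the stability of the limit at $t=0$ directly from the hypotheses \eqref{stabu0}--\eqref{stabv0} --- which is in fact what you yourself do in your last sentence. So the detour is both unjustified and unnecessary: drop the discrete-stability requirement on $(u_0^h,v_0^h)$ (and note that nodal interpolation is not even defined for $v_0\in H^1(\Om)$ and Scott--Zhang does not preserve the bounds $0\leq v\leq 1$; a positivity-preserving quasi-interpolant, e.g.\ with nodal averages, is needed), and your argument reduces to the paper's proof.
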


\begin{remark}\label{rmk2}
	We again stress that the study of existence of quasi-static evolution for the phase-field model based on the Ambrosio-Tortorelli functional~\eqref{AmbTort} is not a novelty. For instance, such a problem has been tackled in~\cite{Giacomini2005,Knees2015,Negri2014} using different convergence schemes, always in a continuous-space setting.
	
	What we claim is the main contribution of our paper is the technique used to construct such an evolution, which is based on the algorithm introduced in~\cite{Artina2015}. In particular, the method we exploit here is suitable for applications and numerical simulations, since, as we explain in the second part of this section, we first show the existence of a finite-dimensional quasi-static evolution in a discretized $H^{1}$-space (see~\eqref{discretespace}), and then pass to the limit as the discretization becomes finer and finer. We show that in the limit we recover a quasi-static evolution in the sense of Definition~\ref{quasi-staticcontinuous}. By this argument we deduce that the numerical scheme used to construct approximate solutions for the evolution problem is stable, i.e., it guarantees convergence to a suitable quasi-static evolution.
\end{remark}

\subsection{Space-discrete setting}
Let us now describe the space-discrete counterpart of the above setting. We first want to discretize the domain~$\Om$ following the basic ideas of the finite element method (for more details on the theory see, e.g., \cite{Quarteroni1994}). Therefore, given the mesh parameter $h\in\mathbb{N}{\setminus}\{0\}$, we define the uniform triangulation $\T_{h}$ of $\Om$ as follows: for every $k,j\in\{1,\ldots,h\}$ we set
\begin{align}
	K^{+}_{k,j} &\coloneq \Bigl\{ x = (x_1 , x_2) \in \Om : x_1 \in \Bigl[ \tfrac{k-1}{h}, \tfrac{k}{h} \Bigr], x_2 \in \Bigl[ \tfrac{j-1}{h}, \tfrac{j-1}{h} + \bigl( \tfrac{k}{h} - x_1 \bigr) \Bigr] \Bigr\}\,,\label{K+} \\
	K^{-}_{k,j} &\coloneq \Bigl\{ x = (x_1 , x_2) \in \Om : x_1 \in \Bigl[ \tfrac{k-1}{h}, \tfrac{k}{h} \Bigr], x_2 \in \Bigl[ \tfrac{j}{h} + \bigl( \tfrac{k-1}{h} - x_1 \bigr) ,\tfrac{j}{h} \Bigr] \Bigr\}\,,\label{K-}
\end{align}
so that $\T_{h}$ is the union of $K_{k,j}^{+}$ and $K_{k,j}^{-}$, namely,
\begin{equation}\label{triangulation}
	\T_{h}\coloneq \bigcup_{j,k=1}^{h} (K^{+}_{k,j} \cup K_{k,j}^{-}) \,.
\end{equation} 
Furthermore, we denote by~$\Delta_{h}$ the set of all the vertices of~$\T_{h}$ and we set $N_{h}\coloneq \#\Delta_{h}$. As we have already mentioned in the Introduction, the choice of the uniform triangulation~\eqref{K+}-\eqref{triangulation} is made in order to simplify the exposition. More general triangulation~$\T_{h}$ can be considered, fulfilling the usual bound for interpolation estimates. We refer to~\cite{Quarteroni1994} for more details.

Once we are given the triangulation~$\T_{h}$, we need to discretize the function space~$H^{1}(\Om)$. Therefore, we define the finite-dimensional function space~$\F_{h}$ as the set of continuous functions on~$\overline{\Om}$ that are affine on each triangle $K\in \T_h$. More precisely, we set
\begin{equation}\label{discretespace}
	\F_{h} \coloneq \{u \in C(\overline{\Om}) \cap H^{1}(\Om) : \nabla u \text{ is constant a.e.~on~$K$ for every $K\in\T_{h}$}\}\,.
\end{equation}

Clearly,~$\F_{h}$ can be endowed with the usual~$H^{1}$-norm. Moreover, we fix a basis~$\{\xi_{l}\}_{l=1}^{N_{h}}$ of~$\F_{h}$ in the following natural way: for every~$l=1,\ldots, N_{h}$, the element~$\xi_{l}\in\F_{h}$ is such that
\begin{equation}\label{basis}
	\xi_{l}(x_{m})=\delta_{lm} \qquad\text{for every $x_{m}\in\Delta_{h}$},
\end{equation}
where~$\delta_{lm}$ is the Kronecker delta. We further assume that the basis~$\{\xi_{l}\}_{l=1}^{N_{h}}$ satisfies the stiffness condition
\begin{equation}\label{stiffness}
	\int_{\Om}\nabla \xi_{l} \cdot \nabla \xi_{m} \,\di x \leq 0 \qquad \text{for every $l,m\in\{1,\dotsc,N_{h}\}$, $l\neq m$},
\end{equation}
which is fulfilled, e.g., if the angles of the triangles are smaller or equal to~$\frac{\pi}{2}$ (see~\cite{Ciarlet1973}).

In this framework, we introduce the discrete counterpart of the stored elastic energy~\eqref{elastic} and of the dissipated energy~\eqref{D}: for every $u,v\in\F_{h}$, we set
\begin{align}
	\E_{h}(u,v) &\coloneq \frac{1}{2}\int_{\Om} \bigl( \p_{h}(v^{2})+\eta \bigr) \abs{\nabla u}^{2} \,\di x \,,\label{discreteelastic}\\
	\D_{h}(v) &\coloneq \frac{1}{2}\int_{\Om}|\nabla{v}|^{2}\,\di x+\frac{1}{2}\int_{\Om}\p_{h}\bigl( (1-v)^{2} \bigr) \,\di x\,, \label{discreteD}
\end{align}
where $\p_{h}\colon C(\overline{\Om})\to \F_{h}$ is the Lagrangian interpolant onto the space~$\F_{h}$, i.e., the unique operator defined on $C(\overline{\Om})$ with values in~$\F_{h}$ such that
\begin{equation}\label{ph}
	\p_{h}(\varphi)(x_{l}) = \varphi(x_{l}) \qquad \text{for every $\varphi\in C(\overline{\Om})$ and every $x_{l}\in\Delta_{h}$}.
\end{equation}
As in~\eqref{J}, the \emph{discrete total energy} is the sum of $\E_{h}$ and $\D_{h}$. Hence,
\begin{equation}\label{discretefunctional}
	\J_{h}(u,v) \coloneq \E_{h}(u,v) + \D_{h}(v)\,.
\end{equation}

We note that, thanks to~\cite{Quarteroni1994}, we can also approximate the Dirichlet boundary datum in~$\F_{h}$. More precisely, there exists a sequence $w_{h} \in  W^{1,2} ([0,T]; \F_{h})$ such that $w_{h}\to w$ in~$W^{1,2} ([0,T]; H^{1}(\Om))$ as $h \to +\infty$. In particular, this implies that $w_{h}(t)\to w(t)$ in~$H^{1}(\Om)$ for every $t\in[0,T]$ and $\dot{w}_{h}(t)\to\dot{w}(t)$ in~$H^{1}(\Om)$ for a.e. $t\in[0,T]$. Hence, the quasi-static evolution in the space-discrete setting (see Definition~\ref{quasi-static}) will be driven by the approximate boundary datum $w_{h}$, and, as in~\eqref{A}, for every~$h$ and every $t\in[0,T]$ we restrict the set of admissible displacements to
\begin{equation}\label{discreteA}
	\A_{h}(w_{h}(t)) \coloneq \{u\in\F_{h} :  u=w_{h}(t) \text{ on } \partial\Om \}\,.
\end{equation}

Analogously to Definition~\ref{quasi-staticcontinuous}, the notion of finite-dimensional quasi-static evolution reads as follows:
\begin{definition}\label{quasi-static}
	Let $T>0$ and $h\in\mathbb{N}{\setminus}\{0\}$ be fixed. Let $w_{h}\in W^{1,2}([0,T];\F_{h})$. We say that a pair of functions $(u_{h},v_{h})\colon[0,T]\to\F_{h}{\,\times\,}\F_{h}$ is a \emph{finite-dimensional quasi-static evolution} if it satisfies the following conditions:
	\begin{enumerate}
		\item \label{irreversCond} \emph{Irreversibility}: $0\leq v_{h}(t)\leq v_{h}(s)\leq 1$ in~$\Om$ for every $0\leq s\leq t\leq T$;
		\item \label{stabilityCond}\emph{Stability}: for every $t\in(0,T]$ we have
		\begin{align}
			\J_{h} \bigl( u_{h}(t), v_{h}(t) \bigr) &\leq \J_{h} \bigl( u, v_{h}(t) \bigr) \qquad \text{for every $u\in\A_{h}(w_{h}(t))$} \,, \label{stab1}\\
			\J_{h} \bigl( u_{h}(t), v_{h}(t) \bigr) &\leq \J_{h} \bigl( u_{h}(t), v \bigr) \qquad \text{for every $v\in\F_{h}, v\leq v_{h}(t)$ in~$\Om$} \,; \label{stab2}
		\end{align} 
		\item \emph{Energy-dissipation inequality}: for every $t\in[0,T]$
		\begin{align}
			\J_{h}\bigl( u_{h}(t),v_{h}(t) \bigr) \leq{}& \J_{h} \bigl( u_{h}(0), v_{h}(0) \bigr) \nonumber \\
			&+ \int_{0}^{t} \int_{\Om} \bigl( \p_{h} (v_{h}^{2}(s)) + \eta \bigr) \nabla u_{h}(s) \cdot \nabla \dot{w}_{h}(s) \,\di x \,\di s\,. \label{energyineq}
		\end{align}
	\end{enumerate}
\end{definition}

\begin{remark}
Let us briefly comment on the stability condition~\eqref{stabilityCond} of Definition~\ref{quasi-static}. As in the space-continuous setting, being the functional~$\J_{h}$ separately convex with respect to the variables~$u$ and~$v$, inequalities~\eqref{stab1} and~\eqref{stab2} are equivalent to
\begin{align*}
	0 &=\partial_{u} \J_{h}(u_{h}(t),v_{h}(t)) = \int_{\Om} (\p_{h}(v_{h}^{2}(t)) + \eta) \nabla{u_{h}(t)} \cdot \nabla{\varphi} \,\di x \,, \\
	0&\leq \partial_{v}\J_{h}(u_{h}(t),v_{h}(t))\\
	&= \int_{\Om} \p_{h}(v_{h}(t)\psi) \abs{\nabla{u_{h}(t)}}^{2} \, \di x + \int_{\Om} \nabla{v_{h}(t)} \cdot \nabla{\psi} \,\di x - \int_{\Om} \p_{h} \Bigl( \bigl(1-v_{h}(t) \bigr)\psi\Bigr)\,\di x \,,
\end{align*}
 for every $\varphi\in\F_{h}\cap H^{1}_{0}(\Om)$ and $\psi\in\F_{h}$ with $\psi\leq0$ in~$\Om$.
 
 Moreover, we want the property~\eqref{stabilityCond} to be satisfied only in the interval~$(0,T]$. The motivation of this choice is the following: in Theorem~\ref{mainthm} (see also Section~\ref{sectionevolution}) we aim to construct a quasi-static evolution in the space-continuous setting as limit of finite-dimensional quasi-static evolutions. For this reason, as it will be shown in the proof of Theorem~\ref{mainthm}, we need to find \emph{ad hoc} approximations of the initial conditions $u_{0},\,v_{0}$ in the space~$\F_{h}$. In doing this, we can not guarantee to keep track of the stability properties~\eqref{stabu0}\nobreakdash--\eqref{stabv0} of the pair~$(u_{0},v_{0})$ (see Theorem~\ref{mainthm}). Therefore, at this stage it is enough for us to have stability for strictly positive time, while in the space-continuous limit we will recover it also for $t=0$. 
\end{remark}

In the following theorem, we state the existence of a finite-dimensional quasi-static evolution. Before stating the proof, we need some auxiliary results. Here, we only mention that the construction of a finite-dimensional quasi-static evolution is based on the incremental procedure described in Section~\ref{auxpb} and presented in a different context in~\cite{Artina2016}.

\begin{theorem}\label{thm1}
	Let $h\in\mathbb{N}{\setminus}\{0\}$, $w_{h}\in W^{1,2}([0,T];\F_{h})$, and $u_{0,h},v_{0,h}\in\F_{h}$ be such that $u_{0,h}\in\A_{h}(w_{h}(0))$ and $v_{0,h}\geq0$ in~$\Om$. Then, there exists a finite dimensional quasi-static evolution~$(u_{h},v_{h})\colon[0,T]\to\F_{h}{\,\times\,}\F_{h}$ such that $(u_{h}(0),v_{h}(0))=(u_{0,h},v_{0,h})$.
\end{theorem}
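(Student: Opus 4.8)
The plan is to prove Theorem~\ref{thm1} by the standard time-incremental (implicit Euler) scheme, where at each time node the incremental problem is itself solved by the alternate minimization algorithm~\eqref{Intro7}--\eqref{Intro8}, and then to pass to the limit as the time-step tends to zero. Fix $h$ throughout, so we work entirely in the finite-dimensional space $\F_h$. First I would fix $k\in\N\setminus\{0\}$ and the nodes $t^k_i=iT/k$, set $(u^k_h(0),v^k_h(0))=(u_{0,h},v_{0,h})$, and define $(u^k_h(t^k_i),v^k_h(t^k_i))$ recursively: apply Proposition~\ref{prop2} with initial data $(u_{0},v_{0})=(u^k_h(t^k_{i-1}),v^k_h(t^k_{i-1}))$ and boundary datum $w_h(t^k_i)$ to obtain a critical point of $\J_h$ at time $t^k_i$ in the class $\A_h(w_h(t^k_i))$ with $v\le v^k_h(t^k_{i-1})$. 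This immediately gives discrete irreversibility along the nodes ($v^k_h(t^k_i)\le v^k_h(t^k_{i-1})$, and $0\le v^k_h$ is preserved since each minimization in $v$ only decreases it and cannot go below $0$ — one checks that truncation at $0$ does not increase $\J_h$), and the nodal stability conditions~\eqref{stab1}--\eqref{stab2} hold at each $t^k_i$ by construction. Extend $(u^k_h,v^k_h)$ to all of $[0,T]$ as a piecewise-constant (right-continuous) interpolant, call it $(\bar u^k_h,\bar v^k_h)$, and let $(u^k_h(t),v^k_h(t))$ denote also the choice of discrete time $\tau_k(t):=\max\{t^k_i:t^k_i\le t\}$.

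Next I would derive the discrete energy-dissipation estimate at the nodal level. Comparing $\J_h(u^k_h(t^k_i),v^k_h(t^k_i))$ with $\J_h(u^k_h(t^k_i)+w_h(t^k_i)-w_h(t^k_{i-1}),v^k_h(t^k_{i-1}))$ as a competitor in~\eqref{Intro7} (and using minimality in $v$ for the previous step), expanding the quadratic elastic term and telescoping over $i$, yields
\begin{equation*}
\J_h(u^k_h(t^k_m),v^k_h(t^k_m))\le \J_h(u_{0,h},v_{0,h})+\sum_{i=1}^{m}\int_\Om(\p_h((v^k_h)^2(t^k_{i-1}))+\eta)\nabla u^k_h(t^k_{i-1})\cdot\nabla(w_h(t^k_i)-w_h(t^k_{i-1}))\,\di x + o(1),
\end{equation*}
where the error term collects the quadratic remainders $\tfrac12\int_\Om(\p_h((v^k_h)^2)+\eta)|\nabla(w_h(t^k_i)-w_h(t^k_{i-1}))|^2\,\di x$ and is controlled by $\|w_h\|_{W^{1,2}([0,T];H^1)}$ times $\max_i\|w_h(t^k_i)-w_h(t^k_{i-1})\|_{H^1}\to 0$. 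Rewriting the sum as a time integral of the piecewise-constant interpolants, $\sum_i\int_\Om(\dots)\cdot\nabla(w_h(t^k_i)-w_h(t^k_{i-1}))=\int_0^{t}\int_\Om(\p_h((\bar v^k_h)^2)+\eta)\nabla\bar u^k_h\cdot\nabla\dot w_h\,\di x\,\di s$ up to another vanishing error, and using a uniform (in $k$) a~priori $H^1\times H^1$ bound on $(\bar u^k_h,\bar v^k_h)$ coming from the energy estimate together with $u^k_h-w_h\in H^1_0$ and Poincaré, I get the discrete version of~\eqref{energyineq} along $\tau_k(t)$, with an error vanishing as $k\to\infty$.

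Then I would extract limits as $k\to\infty$. Irreversibility plus the uniform $H^1$-bound on $\bar v^k_h$ allows a Helly-type selection: since $t\mapsto\bar v^k_h(t)$ is monotone nonincreasing (in the pointwise a.e.\ order) and bounded in $\F_h$ — a \emph{finite}-dimensional space — I can pass to a subsequence so that $\bar v^k_h(t)\to v_h(t)$ in $\F_h$ for every $t\in[0,T]$, with $v_h$ nonincreasing and $0\le v_h\le 1$; for $u_h$, note that once $v_h(t)$ is fixed, $u_h(t)$ is the unique minimizer of the strictly convex problem $\min\{\J_h(\cdot,v_h(t)):\cdot\in\A_h(w_h(t))\}$ (strict convexity from the $\eta>0$ term), so I \emph{define} $u_h(t)$ that way and check $u^k_h(\tau_k(t))\to u_h(t)$ using stability~\eqref{stab1} at the discrete times together with $\bar v^k_h\to v_h$ and $w_h(\tau_k(t))\to w_h(t)$. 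Stability~\eqref{stab1} for the limit holds by construction of $u_h(t)$; stability~\eqref{stab2} passes to the limit because the constraint set $\{v\le \bar v^k_h(t)\}$ converges to $\{v\le v_h(t)\}$ and $\J_h$ is continuous on $\F_h$ — here one uses that a competitor $v\le v_h(t)$ can be approximated by $\min\{v,\bar v^k_h(t)\}\to v$ in $\F_h$ (continuity of $\p_h$ and of $w\mapsto w^2$ on $\F_h$, all norms on $\F_h$ being equivalent). Finally, the energy-dissipation inequality passes to the limit by lower semicontinuity of $\J_h$ (indeed continuity on $\F_h$) on the left and by strong convergence $\p_h((\bar v^k_h)^2)\to\p_h(v_h^2)$, $\nabla\bar u^k_h\to\nabla u_h$ (pointwise in $t$, dominated) on the right, together with the fact that the limit map $t\mapsto(u_h(t),v_h(t))$ inherits enough regularity ($v_h$ monotone hence a.e.\ differentiable with values in the finite-dimensional $\F_h$; $u_h$ then $W^{1,2}$ in time by the implicit function theorem applied to the strictly convex elliptic problem) to make $\dot w_h$ pair against $\nabla u_h$ in $L^1$.

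\textbf{Main obstacle.} The delicate point is not the $k\to\infty$ passage per se — finite-dimensionality makes all compactness cheap — but ensuring that the alternate-minimization output of Proposition~\ref{prop2} genuinely yields a point satisfying \emph{both} nodal minimality conditions~\eqref{stab1}--\eqref{stab2} \emph{and} a usable incremental energy inequality, and that the quadratic remainder/interpolation errors in the telescoping estimate are uniform in $k$ (they involve $\p_h$ of products of $\F_h$-functions, so one must keep the mesh $h$ fixed and invoke equivalence of norms on $\F_h$ with $h$-dependent but $k$-independent constants). A secondary technical point is recovering the time-regularity of $t\mapsto u_h(t)$ needed to interpret the work integral in~\eqref{energyineq}: this I would handle by differentiating the Euler--Lagrange equation $\int_\Om(\p_h(v_h^2)+\eta)\nabla u_h\cdot\nabla\varphi=0$ in time, using that $v_h\in W^{1,\infty}$ or at least $BV$ in time into $\F_h$ and that the bilinear form $\varphi\mapsto\int_\Om(\p_h(v_h^2)+\eta)\nabla u_h\cdot\nabla\varphi$ is uniformly coercive on $\F_h\cap H^1_0(\Om)$ thanks to $\eta>0$.
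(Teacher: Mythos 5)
Your strategy -- time discretization, alternate minimization at each node via Proposition~\ref{prop2}, a telescoped nodal energy estimate driven by the boundary increment, Helly selection for the phase field, and stability of the minimum problems plus uniqueness to identify $u_h(t)$ -- is essentially the paper's own route. However, the step producing the discrete energy inequality would fail as written: the pair $(u^k_h(t^k_i),v^k_h(t^k_i))$ is only a critical point, not a minimizer of $\J_h$ over the class $\{u\in\A_h(w_h(t^k_i)),\ v\le v^k_h(t^k_{i-1})\}$, and $\J_h(u,\cdot)$ is not monotone with respect to the damage constraint (the dissipation $\D_h$ may increase when $v$ decreases), so you cannot compare its energy directly with that of a competitor evaluated at the \emph{old} phase field $v^k_h(t^k_{i-1})$ by invoking~\eqref{Intro7}; moreover the competitor you wrote, $u^k_h(t^k_i)+w_h(t^k_i)-w_h(t^k_{i-1})$, does not even have the right trace -- it must be built from the \emph{previous} displacement, $u^k_h(t^k_{i-1})+w_h(t^k_i)-w_h(t^k_{i-1})$. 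The correct argument (the paper's chain~\eqref{4.1}--\eqref{6} leading to~\eqref{14}) performs the comparison at the first inner iterate $j=1$ of the alternate scheme, exploits the monotone decrease of $\J_h$ along the inner iterations, and only then lets $j\to+\infty$, using the convergence of the iterates from Proposition~\ref{prop2} and the continuity of $\J_h$ on $\F_h$.

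A second step that is inadmissible as stated is the passage of the unilateral stability~\eqref{stab2} to the limit via the competitor $\min\{v,\bar v^k_h(t)\}$: the pointwise minimum of two elements of $\F_h$ is in general not piecewise affine on $\T_h$, hence not in $\F_h$. The fix is the translation trick of Lemma~\ref{lemma1}: for $v\le v_h(t)$ test with $\bar v^k_h(t)+v-v_h(t)\in\F_h$, which lies below $\bar v^k_h(t)$; citing Lemma~\ref{lemma1} directly (as the paper does) settles both stability conditions at once. Two smaller points: the bound $0\le v\le 1$ cannot be obtained by truncation inside $\F_h$ (truncation leaves the space -- this is exactly why Proposition~\ref{Prop1} argues through the stiffness condition~\eqref{stiffness}), though you do not need it since Proposition~\ref{prop2} already provides it; and no time regularity of $t\mapsto u_h(t)$ is required to make sense of the work integral -- pointwise-in-$t$ strong convergence, the uniform bounds, and dominated convergence in $s$ suffice, so the implicit-function-theorem argument (which would in any case break down at jump times of $v_h$) should be dropped.
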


In the next section, we describe the core of our convergence algorithm, which will allow us to construct critical points of the energy functional~$\J_{h}$ satisfying proper boundary and irreversibility conditions. Such a scheme will be exploited in the proof of Theorem~\ref{thm1} to find a suitable time-discrete approximation of a finite-dimensional quasi-static evolution.

\section{The alternate minimization scheme}\label{auxpb}
For a given mesh parameter~$h\in\mathbb{N} \setminus \{0\}$, let us fix two functions $v_{0}, w\in\F_{h}$, with~$v_{0}\geq0$ in~$\Om$. In what follows, we show a constructive way to find a critical point $(\bar{u},\bar{v}) \in \F_{h} \times \F_{h}$ of the functional~$\J_{h}$ under the constraints $\bar u = w$ on~$\partial \Omega$ and $\bar v \leq v_0$ in~$\Om$. 

The recursive scheme we adopt here is a modification of the strategy presented in~\cite{Artina2015} in a finite-dimensional setting, and similar to the alternate minimization procedure used in~\cite{Knees2015} for the ``continuum" phase-field model. What we have to take care of in our context is the presence of the operator~$\p_{h}$ of~\eqref{ph} in the definition of the functional~$\J_{h}$ (see~\eqref{discreteelastic}--\eqref{discretefunctional}). 

For every $j\in\mathbb{N}{\setminus}\{0\}$ we define the functions~$u_{j}$ and~$v_{j}$ in~$\F_{h}$ as follows:
\begin{align}
	u_{j} &\coloneq \argmin \,\{\J_{h}(u,v_{j-1}) :\, u\in\A_{h}(w) \} \,, \label{minu}\\
	v_{j} &\coloneq \argmin \,\{\J_{h}(u_{j},v) : v \in \F_{h},\, v\leq v_{j-1} \text{ in~$\Om$} \} \,. \label{minv}
\end{align} 
The existence of minimizers of~\eqref{minu} is standard.  The uniqueness follows by the strict convexity of the functional~$\J_{h}(\cdot, v)$ for~$v\in\F_{h}$.

In the following proposition, we briefly discuss the existence and uniqueness of~$v_{j}$. We also show the usual bound $0\leq v_{j}\leq1$, which does not follow by simple truncation argument because of the nature of the function space $\F_{h}$ and of the presence of the interpolation operator~$\p_{h}\colon C(\overline{\Om})\to\F_{h}$. The proof is contained in the~\ref{ProofProp1}.

\begin{proposition}\label{Prop1}
	The minimum problem~\eqref{minv} admits a unique solution. Moreover, the solution~$v_{j}\in\F_{h}$ satisfies $0\leq v_{j}\leq 1$ for every $j\in\mathbb{N}{\setminus}\{0\}$.
\end{proposition}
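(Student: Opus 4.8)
The plan is to prove existence, uniqueness, and the two-sided bound separately, using the structure of $\J_h$ restricted to the constraint set.

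\emph{Existence and uniqueness.} First I would observe that the admissible set $\{v\in\F_h : v\leq v_{j-1} \text{ in }\Om\}$ is a nonempty (it contains $v_{j-1}$, and also $0$ since $v_{j-1}\geq0$), closed, convex subset of the finite-dimensional space $\F_h$. The functional $v\mapsto\J_h(u_j,v)$ is continuous on $\F_h$. For coercivity, note that $\E_h(u_j,\cdot)\geq0$ and $\D_h(v)\geq\frac12\int_\Om|\nabla v|^2\,\di x$, while the interpolation term $\frac12\int_\Om\p_h((1-v)^2)\,\di x$ is nonnegative; since on the finite-dimensional space $\F_h$ the $\F_h$-quadratic form $v\mapsto\int_\Om\p_h((1-v)^2)\,\di x+\int_\Om|\nabla v|^2\,\di x$ controls $\norm{v}_{H^1}^2$ up to lower-order terms (all norms on $\F_h$ are equivalent, and $\p_h((1-v)^2)$ evaluated at nodes equals $(1-v)^2$ at nodes, giving control of the nodal values), the functional is coercive. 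Hence a minimizer exists by the direct method. Uniqueness follows because $v\mapsto\J_h(u_j,v)$ is strictly convex on $\F_h$: the term $\frac12\int_\Om|\nabla v|^2\,\di x$ is convex, $\E_h(u_j,v)=\frac12\int_\Om(\p_h(v^2)+\eta)|\nabla u_j|^2\,\di x$ is convex in $v$ because $\p_h(v^2)$ is the nodal interpolant of $v^2$ and hence a convex function of the nodal values (each node contributes $v(x_l)^2$ weighted by a nonnegative factor $\frac12\int|\nabla u_j|^2\xi_l\,\di x$, and $\p_h(v^2)=\sum_l v(x_l)^2\xi_l$), and $\frac12\int_\Om\p_h((1-v)^2)\,\di x=\frac12\sum_l(1-v(x_l))^2\int_\Om\xi_l\,\di x$ is strictly convex in the nodal values since $\int_\Om\xi_l\,\di x>0$. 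Restricted to the convex constraint set, strict convexity gives uniqueness.

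\emph{Lower bound $v_j\geq0$.} The key point here, as the paper warns, is that truncation $v_j\mapsto v_j^+$ need not land back in $\F_h$ — but in fact for piecewise-affine functions the positive part is \emph{not} generally in $\F_h$, so instead I would argue via the nodal representation. Let $v_j=\sum_l c_l\xi_l$ with $c_l=v_j(x_l)$, and consider $\tilde v:=\sum_l c_l^+\xi_l\in\F_h$. Since $0\leq c_l^+\leq c_l$ componentwise and $c_l\leq v_{j-1}(x_l)$ (as $v_j\leq v_{j-1}$ at nodes), we have $\tilde v\leq v_{j-1}$ in $\Om$, so $\tilde v$ is admissible. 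Now I compare energies: $\p_h(\tilde v^2)=\sum_l(c_l^+)^2\xi_l\leq\sum_l c_l^2\xi_l=\p_h(v_j^2)$ pointwise, hence $\E_h(u_j,\tilde v)\leq\E_h(u_j,v_j)$; similarly $\frac12\sum_l(1-c_l^+)^2\int\xi_l\,\di x\leq\frac12\sum_l(1-c_l)^2\int\xi_l\,\di x$ whenever $c_l<0$ (and equality when $c_l\geq0$), since $(1-t)^2$ is decreasing for $t<0$ and $c_l^+=0>c_l$ there; so the interpolated dissipation term does not increase. For the Dirichlet term $\frac12\int_\Om|\nabla\tilde v|^2\,\di x$ I would invoke the stiffness condition~\eqref{stiffness}: writing the Dirichlet energy as $\frac12\sum_{l,m}c_lc_m\int\nabla\xi_l\cdot\nabla\xi_m\,\di x$, replacing $c_l$ by $c_l^+$ does not increase it because the off-diagonal entries $\int\nabla\xi_l\cdot\nabla\xi_m\,\di x\leq0$ are multiplied by products that can only decrease (a standard computation shows $\sum_{l,m}c_l^+c_m^+ a_{lm}\leq\sum_{l,m}c_lc_m a_{lm}$ when $a_{lm}\leq0$ for $l\neq m$ and $\sum_m a_{lm}\geq0$, which holds since constants have zero gradient). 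Therefore $\J_h(u_j,\tilde v)\leq\J_h(u_j,v_j)$, and by uniqueness $\tilde v=v_j$, i.e.\ all $c_l\geq0$, which gives $v_j\geq0$ in $\Om$.

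\emph{Upper bound $v_j\leq1$.} Here I would run the analogous truncation from above: set $\hat v:=\sum_l(c_l\wedge1)\xi_l\in\F_h$. Since $c_l\leq v_{j-1}(x_l)$ and also $c_l\wedge1\leq c_l$, the competitor $\hat v$ satisfies $\hat v\leq v_{j-1}$ (using $v_{j-1}\leq1$, which holds by induction — for $j=1$ one needs $v_0\leq1$; if $v_0$ is only assumed $\geq0$ one instead compares against $v_0\wedge1$, noting $c_l\wedge1\leq v_0(x_l)$ still holds). Then $\p_h(\hat v^2)\leq\p_h(v_j^2)$ at nodes where $c_l>1$ and equal elsewhere, so $\E_h$ does not increase; $(1-c_l\wedge1)^2\leq(1-c_l)^2$ for $c_l>1$ since $|1-c_l\wedge1|=0<c_l-1=|1-c_l|$; and the Dirichlet term does not increase by the same stiffness-condition argument as above (truncation toward a constant value decreases the Dirichlet energy). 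Uniqueness then forces $\hat v=v_j$, giving $v_j\leq1$.

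\emph{Main obstacle.} The delicate step is the monotonicity of the Dirichlet energy under nodal truncation; this is exactly where the stiffness condition~\eqref{stiffness} is essential, and getting the inequality $\sum_{l,m}(T c)_l(Tc)_m a_{lm}\leq\sum_{l,m}c_lc_m a_{lm}$ (with $T$ the nodewise truncation) right — for both one-sided truncations simultaneously, or after combining them — requires care. I expect this to be handled by a short lemma on M-matrices / discrete maximum principle, and it is the only non-routine ingredient; the interpolation-operator terms, by contrast, decouple completely into nodal sums and are straightforward.
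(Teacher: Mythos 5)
Your proposal is correct, but it establishes the bounds $0\le v_j\le1$ by a genuinely different mechanism than the paper. The paper argues by contradiction at an extremal vertex: if $v_j(x_l)<0$ at a node where $v_j$ attains its minimum, the two-sided admissible variation $v_j+\varepsilon\xi_l$ yields a nodal Euler--Lagrange identity in the direction of the single hat function $\xi_l$, whose sign is then contradicted by expanding $\int_\Om\nabla v_j\cdot\nabla\xi_l\,\di x=\sum_m\bigl(v_j(x_m)-v_j(x_l)\bigr)\int_\Om\nabla\xi_m\cdot\nabla\xi_l\,\di x\le 0$ via the stiffness condition~\eqref{stiffness}; the bound $v_j\le1$ is handled analogously at a maximizing node with one-sided variations. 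You instead construct the nodally truncated competitor and show term by term that the energy does not increase, the only nontrivial term being the Dirichlet energy, for which you use exactly the M-matrix structure coming from~\eqref{stiffness} together with the vanishing row sums (the hat functions sum to $1$, so $a_{ll}=-\sum_{m\ne l}a_{lm}$ with $a_{lm}=\int_\Om\nabla\xi_l\cdot\nabla\xi_m\,\di x$): then $\int_\Om|\nabla v|^2\,\di x=\tfrac12\sum_{l\ne m}|a_{lm}|\bigl(v(x_l)-v(x_m)\bigr)^2$, and any componentwise $1$-Lipschitz truncation decreases this; uniqueness then forces the truncation to act trivially. Both proofs hinge on~\eqref{stiffness}; yours packages it as a comparison/truncation (discrete maximum principle) lemma and treats the two bounds symmetrically, while the paper's pointwise extremal-node argument avoids the energy identity and the explicit competitor. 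Your existence argument (coercivity from the interpolated zeroth-order term plus norm equivalence on the finite-dimensional space $\F_h$) also differs from the paper's (uniform Lipschitz bound on a minimizing sequence constrained by $v\le v_0$, after normalizing so that some nodal value is nonnegative), and your justification of strict convexity is more explicit than the paper's bare assertion.

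Two slips, both harmless but worth fixing. First, the inequality ``$0\le c_l^+\le c_l$'' is backwards: one has $c_l\le c_l^+$; admissibility of $\tilde v$ for problem~\eqref{minv} instead follows from $c_l^+\le\max\bigl(v_{j-1}(x_l),0\bigr)=v_{j-1}(x_l)$, i.e.\ from the bound $v_{j-1}\ge0$, which is the induction hypothesis (and is the assumption $v_0\ge0$ for $j=1$) and should be invoked explicitly. Second, in the upper bound no information $v_{j-1}\le1$ is needed at all: $c_l\wedge1\le c_l\le v_{j-1}(x_l)$ already gives admissibility of $\hat v$, so the parenthetical discussion of $v_0\le1$ can simply be dropped.
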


\begin{proof}
	See \ref{ProofProp1}.
\end{proof}

In what remains of this section, we show that any limit $(\bar{u},\bar{v})$ of the sequence $(u_{j},v_{j})$ defined in~\eqref{minu}\nobreakdash--\eqref{minv} is a critical point of the functional~$\J_{h}$ satisfying $\bar{u}\in\A_{h}(w)$ and $\bar{v} \leq v_{0}$ in~$\Om$. To do so, we first show a stability property of the minimum problems~\eqref{minu} and~\eqref{minv}. This is the aim of the following lemma, which is stated in a more general setting than the one needed in this section, since it will be useful also in the proof of Theorem~\ref{thm1}.

\begin{lemma}\label{lemma1}
	Let $u_{k}, v_{k}, w_{k}, z_{k} \in \F_{h}$ be such that $u_{k}\in\A_{h}(w_{k})$ and
	\begin{alignat}{2}
		\J_{h}(u_{k},z_{k}) &\leq \J_{h}(u,z_{k}) &&\qquad \text{for every $u\in\A_{h}(w_{k})$}, \label{a}\\
		\J_{h}(u_{k},v_{k}) &\leq \J_{h}(u_{k},v) &&\qquad \text{for every $v\in\F_{h}$ such that $v\leq z_{k}$ in~$\Om$}. \label{b}
	\end{alignat} 
	Assume that there exist $\bar{u}, \bar{v}, \bar{w}, \bar{z}\in\F_{h}$ such that $u_{k}\to \bar{u}$, $v_{k}\to \bar{v}$, $w_{k}\to \bar{w}$, and $z_{k}\to \bar{z}$ in~$\F_{h}$ as $k\to+\infty$. Then $\bar{u}\in\A_{h}(\bar{w})$ and
	\begin{alignat}{2}
		\J_{h}(\bar{u},\bar{z}) &\leq \J_{h}(u,\bar{z}) &&\qquad \text{for every $u\in\A_{h}(\bar{w})$},\label{c}\\
		\J_{h}(\bar{u},\bar{v}) &\leq \J_{h}(\bar{u},v) &&\qquad \text{for every $v\in\F_{h}$ such that $v\leq \bar{z}$ in~$\Om$} \,.\label{d}
	\end{alignat}
\end{lemma}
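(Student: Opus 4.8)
The statement is a stability-under-limits result for the pair of inequalities defining a ``one-step'' critical point of $\J_h$. Since we work in the fixed finite-dimensional space $\F_h\times\F_h$, where strong convergence, weak convergence, and pointwise (nodal) convergence all coincide, the plan is essentially to pass to the limit in the two inequalities \eqref{a} and \eqref{b} using continuity of all the quantities involved. The only genuine point of care is the set-constraint in \eqref{b}, namely $v\le z_k$ vs.\ $v\le\bar z$, because the admissible set moves with $k$.

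\textbf{Step 1: The boundary condition.} First I would check that $\bar u\in\A_h(\bar w)$. Each $u_k$ equals $w_k$ on $\partial\Om$; since $\F_h$ is finite-dimensional, $u_k\to\bar u$ and $w_k\to\bar w$ in $H^1(\Om)$ implies nodal convergence of the coefficients, hence $u_k\to\bar u$ and $w_k\to\bar w$ uniformly on $\overline\Om$, and in particular $\bar u=\bar w$ on $\partial\Om$. So $\bar u\in\A_h(\bar w)$.

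\textbf{Step 2: The $u$-inequality \eqref{c}.} Fix $u\in\A_h(\bar w)$. The difficulty here is only that $u$ is admissible for the limit boundary datum $\bar w$, not for $w_k$; I would correct it by setting $u_k\coloneq u+(w_k-\bar w)\in\A_h(w_k)$, so that $u_k\to u$ in $\F_h$. Plug $u_k$ as competitor in \eqref{a}: $\J_h(u_k,z_k)\le\J_h(u_k,z_k)$. Now $\J_h\colon\F_h\times\F_h\to\R$ is continuous (it is a polynomial-type expression in the finitely many nodal values, the interpolant $\p_h$ being linear and continuous), so letting $k\to\infty$ gives $\J_h(\bar u,\bar z)\le\J_h(u,\bar z)$, which is \eqref{c}.

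\textbf{Step 3: The $v$-inequality \eqref{d}, the main obstacle.} Fix $v\in\F_h$ with $v\le\bar z$ in $\Om$. I must produce competitors $v_k\in\F_h$ with $v_k\le z_k$ and $v_k\to v$, then pass to the limit in \eqref{b}. The naive choice $v_k=\min\{v,z_k\}$ fails because the pointwise minimum of two $\F_h$-functions need not lie in $\F_h$; instead I would use the Lagrange interpolant $v_k\coloneq\p_h(\min\{v,z_k\})$. Since $\p_h$ preserves nodal values and both $v$ and $z_k$ are in $\F_h$, one has $v_k(x_l)=\min\{v(x_l),z_k(x_l)\}\le z_k(x_l)$ at every node $x_l\in\Delta_h$, and an $\F_h$-function is $\le z_k$ in all of $\Om$ iff this holds at the nodes (affine interpolation between values that are dominated), so $v_k\le z_k$ in $\Om$; moreover $v_k$ is admissible in \eqref{b}. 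For convergence: nodal values $z_k(x_l)\to\bar z(x_l)\ge v(x_l)$, so for $k$ large $\min\{v(x_l),z_k(x_l)\}\to v(x_l)$ at each node, whence $v_k\to v$ in $\F_h$. (A small subtlety: if $v(x_l)=\bar z(x_l)$ at some node, the convergence $\min\{v(x_l),z_k(x_l)\}\to v(x_l)$ still holds since both bounds converge to that common value.) Now feed $v_k$ into \eqref{b}: $\J_h(u_k,v_k)\le\J_h(u_k,v_k)$... more precisely $\J_h(u_k,v_k)\le\J_h(u_k,\tilde v)$ is not what we want; rather \eqref{b} with competitor $v_k$ reads $\J_h(u_k,v_k^{\mathrm{min}})\le\J_h(u_k,v_k)$ where $v_k^{\mathrm{min}}$ is the actual minimizer --- so I use \eqref{b} in the form $\J_h(u_k,v_k)\le\J_h(u_k,\hat v_k)$ for the admissible competitor $\hat v_k\coloneq\p_h(\min\{v,z_k\})$: $\J_h(u_k,v_k)\le\J_h(u_k,\hat v_k)$. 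Passing $k\to\infty$ using continuity of $\J_h$ and $u_k\to\bar u$, $v_k\to\bar v$, $\hat v_k\to v$ yields $\J_h(\bar u,\bar v)\le\J_h(\bar u,v)$, which is \eqref{d}. This completes the proof.

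The main obstacle, as indicated, is Step 3: constructing admissible recovery competitors for the moving constraint $v\le z_k$ while staying inside $\F_h$; the resolution is to interpolate the pointwise minimum and to exploit that, for piecewise-affine functions, the ordering $\le$ over $\Om$ is equivalent to the ordering at the nodes. Everything else reduces to the continuity of $\J_h$ on the finite-dimensional space and the equivalence of all topologies there.
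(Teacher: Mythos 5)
Your proof is correct, and Steps 1--2 coincide with the paper's argument: the paper also handles the moving boundary datum by testing \eqref{a} with the translate $u+w_{k}-\bar{w}\in\A_{h}(w_{k})$ and passing to the limit using continuity of $\J_{h}$ on the finite-dimensional space. The only real divergence is in Step 3, where you handle the moving constraint $v\leq z_{k}$ by the interpolated minimum $\hat v_{k}\coloneq\p_{h}(\min\{v,z_{k}\})$, justified via the observation that for functions of $\F_{h}$ the ordering on $\Om$ is equivalent to the ordering at the nodes. This works, but the paper does something simpler and symmetric with Step 2: it tests \eqref{b} with the translate $z_{k}+v-\bar{z}$, which lies in $\F_{h}$, satisfies $z_{k}+v-\bar{z}\leq z_{k}$ directly because $v-\bar{z}\leq0$, and converges to $v$; no nodal-ordering argument or interpolation of a non-$\F_{h}$ function is needed. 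Your construction buys nothing extra here (both competitors converge strongly to $v$ and continuity of $\J_{h}$ does the rest), so the translation trick is preferable for brevity, though your interpolated-min device is the kind of construction that becomes necessary when the constraint is not given by a function of the same discrete space. Two cosmetic remarks: you reuse the symbols $u_{k}$ and $v_{k}$ both for the given sequences and for your competitors, which makes some displayed inequalities read as tautologies before you correct them; and the continuity of $\J_{h}$ on $\F_{h}\times\F_{h}$ (through the nodal values, $\p_{h}$ being linear) deserves the one-line justification you give, since it is also the ingredient the paper uses implicitly.
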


\begin{proof}
	Let us prove~\eqref{c}. For every $k\in\mathbb{N}$ and every $u\in\A_{h}(\bar{w})$ we have
	\begin{equation}\label{0.1}
		\J_{h}(u_{k},z_{k})\leq\J_{h}(u+w_{k}-\bar{w},z_{k})\,.
	\end{equation}
	Since $u+w_{k}-\bar{w}\to u$ in~$\F_{h}$ as $k\to+\infty$, passing to the limit in~\eqref{0.1} we get~\eqref{c}.
	
	As for~\eqref{d}, for every $v\in\F_{h}$ such that $v\leq\bar{z}$ in~$\Om$ we have that $z_{k}+v-\bar{z}\leq z_{k}$. Hence, by~\eqref{b},
	\begin{equation*}
		\J_{h}(u_{k},v_{k})\leq\J_{h}(u_{k},z_{k}+v-\bar{z})\,.
	\end{equation*}
	Passing to the limit in the previous inequality we get~\eqref{d}.
\end{proof}

We are now ready to show the convergence of the sequence $(u_{j},v_{j})$, defined by~\eqref{minu} and~\eqref{minv}, to a critical point of~$\J_{h}$.
\begin{proposition}\label{prop2}
	Let $v_0, w \in \F_{h}$ with $v_0 \geq 0$, and let $u_j, v_j$ be defined by~\eqref{minu} and~\eqref{minv}, respectively. Then the following facts hold:
	\begin{enumerate}
		\item there exist $\bar{u}, \bar{v}\in\F_{h}$ such that $u_{j}\to \bar{u}$ and $v_{j}\to \bar{v}$ in~$\F_{h}$ as $j\to+\infty$; \label{convergence}
		\item the limit function $\bar v$ satisfies $0\leq \bar v \leq 1$; \label{3}
		\item \label{stability} the limit functions $\bar{u}, \bar{v}\in\F_{h}$ satisfy
		\begin{alignat}{2}
			\J_{h}(\bar{u},\bar{v}) &\leq \J_{h}(u,\bar{v}) \qquad &&\text{for every $u\in\A_{h}(w)$} \,, \label{1}\\
			\J_{h}(\bar{u},\bar{v}) &\leq \J_{h}(\bar{u},v) \qquad &&\text{for every $v\in\F_{h}$ with $v\leq \bar{v}$} \,. \label{2}
		\end{alignat}
	\end{enumerate}
\end{proposition}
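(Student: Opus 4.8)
The plan is to exploit the monotone structure of the alternating scheme. First I would establish that the energies $\J_h(u_j, v_{j-1})$ and $\J_h(u_j, v_j)$ form a single non-increasing sequence: by~\eqref{minu} we have $\J_h(u_{j+1}, v_j) \le \J_h(u_j, v_j)$ (since $u_j \in \A_h(w)$ is admissible), and by~\eqref{minv} we have $\J_h(u_j, v_j) \le \J_h(u_j, v_{j-1})$ (since $v_j \le v_{j-1}$ is admissible for the next step's constraint too, but more directly $v_j$ is the minimizer over $v \le v_{j-1}$). Chaining these gives a bounded-below, non-increasing real sequence, hence convergent, and in particular the increments tend to zero. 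Simultaneously, the constraint $v_j \le v_{j-1}$ shows $(v_j)$ is pointwise non-increasing and, by Proposition~\ref{Prop1}, bounded in $[0,1]$, so $v_j \to \bar v$ pointwise at every vertex, hence in $\F_h$ (all norms on the finite-dimensional space $\F_h$ are equivalent, and convergence of nodal values is convergence in $\F_h$); the bound $0 \le \bar v \le 1$ passes to the limit, giving item~\eqref{3}.

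Next I would extract convergence of $(u_j)$. From the energy bound and coercivity of $\J_h(\cdot, v)$ — note $\J_h(u,v) \ge \tfrac{\eta}{2}\|\nabla u\|_{L^2}^2$ and the Dirichlet condition $u = w$ on $\partial\Om$ pins down the constant via a Poincaré-type inequality on $\F_h$ — the sequence $(u_j)$ is bounded in $\F_h$, hence precompact. To get convergence of the \emph{whole} sequence rather than subsequences, I would argue that any two limit points must coincide: if $u_{j_k} \to \bar u$ and $u_{j_\ell} \to \tilde u$, then using that the energy increments vanish together with the strict convexity of $\J_h(\cdot, v)$ for fixed $v$ — and the fact that, by the pointwise monotone convergence $v_{j-1} \to \bar v$, the quadratic forms $u \mapsto \J_h(u, v_{j-1})$ converge (coefficients $\p_h(v_{j-1}^2) + \eta$ converge uniformly) — one sees both $\bar u$ and $\tilde u$ solve the same strictly convex minimization problem $\min\{\J_h(u, \bar v) : u \in \A_h(w)\}$, so $\bar u = \tilde u$. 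This yields item~\eqref{convergence}.

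For item~\eqref{stability} I would apply Lemma~\ref{lemma1} with the identifications $u_k \rightsquigarrow u_j$, $v_k \rightsquigarrow v_j$, $w_k \rightsquigarrow w$ (constant), and $z_k \rightsquigarrow v_{j-1}$. Hypotheses~\eqref{a} and~\eqref{b} of the lemma are exactly~\eqref{minu} and~\eqref{minv}: $u_j$ minimizes $\J_h(\cdot, v_{j-1})$ over $\A_h(w)$, and $v_j$ minimizes $\J_h(u_j, \cdot)$ over $\{v \le v_{j-1}\}$. The convergences needed by the lemma are $u_j \to \bar u$, $v_j \to \bar v$, $w \to w$, and $v_{j-1} \to \bar v$; the last holds because $v_{j-1} \to \bar v$ as well (same limit, index shift is harmless). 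The lemma then delivers $\J_h(\bar u, \bar v) \le \J_h(u, \bar v)$ for all $u \in \A_h(w)$, which is~\eqref{1}, and $\J_h(\bar u, \bar v) \le \J_h(\bar u, v)$ for all $v \in \F_h$ with $v \le \bar v$, which is~\eqref{2}.

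The main obstacle I anticipate is item~\eqref{convergence}: boundedness and subsequential convergence of $(u_j)$ are routine, but upgrading to convergence of the full sequence requires the argument that all cluster points solve one and the same limiting problem — this is where one must carefully use that the energy increments $\J_h(u_j, v_{j-1}) - \J_h(u_j, v_j) \to 0$ and $\J_h(u_j, v_j) - \J_h(u_{j+1}, v_j) \to 0$ control the distance of $u_j$ to the minimizer of $\J_h(\cdot, v_{j-1})$ via the uniform strict convexity (the Hessian in $u$ is bounded below by $\eta I$, uniformly in the coefficient $\p_h(v_{j-1}^2) \ge 0$). The convergence of $v_j$, by contrast, is essentially free from the monotonicity plus the $[0,1]$ bound. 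A minor point to handle cleanly is that $\p_h$ is continuous on $\F_h$ with respect to its norm (indeed it interacts with nodal values), so all the interpolated coefficients converge along with $v_j$.
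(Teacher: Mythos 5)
Your proposal is correct and follows essentially the same route as the paper: monotonicity of the alternating energies, monotone convergence of $v_{j}$ with the $[0,1]$ bound from Proposition~\ref{Prop1}, boundedness and (subsequential) convergence of $u_{j}$, Lemma~\ref{lemma1} applied with $z_{k}=v_{j-1}$ and constant boundary datum $w$ to obtain~\eqref{1}--\eqref{2}, and uniqueness of the minimizer in~\eqref{1} to upgrade to convergence of the whole sequence $u_{j}$. The only difference is cosmetic: you argue full convergence of $u_{j}$ directly before invoking the lemma, whereas the paper extracts a subsequence, applies the lemma, and then uses uniqueness.
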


\begin{proof}
	By definition of~$u_{j}$ and $v_{j}$, for every $j\geq2$ we have
	\begin{equation}\label{4}
		\J_{h}(u_{j},v_{j})\leq\J_{h}(u_{j},v_{j-1})\leq\J_{h}(u_{j-1},v_{j-1})\,.
	\end{equation}
	Iterating inequality~\eqref{4}, we obtain 
	\begin{equation*}
		\J_{h}(u_{j},v_{j})\leq\J_{h}(u_{1},v_{1})\leq\J_{h}(w,v_{0})<+\infty\,,
	\end{equation*}
	from which we deduce that the sequences~$u_{j}$ and~$v_{j}$ are bounded in~$\F_{h}$. Being $v_{j}$ a decreasing sequence with values in the interval~$[0,1]$ and being~$\F_{h}$ finite-dimensional, we deduce that there exists~$\bar{v}\in\F_{h}$ such that $v_{j}\to\bar{v}$ in~$\F_{h}$ and $0\leq\bar{v}\leq1$ in~$\Om$, so that property~\eqref{3} holds. Moreover, by compactness, there exists~$\bar{u}\in\F_{h}$ such that, up to a subsequence, $u_{j}\to\bar{u}$.	
	
	Property~\eqref{stability} results from Lemma~\ref{lemma1} applied to the sequences $u_{j}$, $v_{j}$, with fixed boundary datum $w\in\F_{h}$. By uniqueness of solution to~\eqref{1}, we also deduce that the whole sequence~$u_{j}$ converges to~$\bar{u}$ in~$\F_{h}$, and the proof is thus concluded.
\end{proof}

\section{Construction of time-continuous finite-dimensional quasi-static evolutions}\label{qsfindim}
We are now ready to prove the existence of a finite-dimensional quasi-static evolution in the sense of Definition~\ref{quasi-static}. The strategy of the proof is based on a time discretization procedure, typical of many rate-independent processes (see, e.g.,~\cite{Mielke2005}): we approximate continuous in time evolutions with discrete in time limits of the auxiliary incremental problems discussed in Section~\ref{auxpb}. As the time step tends to zero, we obtain a finite-dimensional quasi-static evolution in the sense of Definition~\ref{quasi-static}.

\begin{proof}[Proof of Theorem \ref{thm1}.]
	We perform in details the time discretization procedure described above for fixed $h\in\mathbb{N}{\setminus}\{0\}$ and given initial condition $v_{0,h}$ and $u_{0,h}$ as in the statement of the theorem.
	
	 For every $k\in\mathbb{N}$, we consider the uniform subdivision of the time interval~$[0,T]$ given by $t^{k}_{i}\coloneq \frac{iT}{k}$, $i=0,\ldots,k$. In order to construct a discrete in time evolution in the finite-dimensional setting we follow the algorithm proposed in~\cite{Artina2015}: for $i=0$ we set $u^{k,h}_{0}\coloneq u_{0,h}$ and $v^{k,h}_{0}\coloneq v_{0,h}$. For~$i\geq1$, at the instant~$t^{k}_{i}$ 
	we construct a critical point $(u^{k,h}_{i},v^{k,h}_{i})\in\F_{h}{\,\times\,}\F_{h}$ of $\J_{h}$ as limit of the alternating minimization process described in Section~\ref{auxpb} with initial condition $(u^{k,h}_{i-1},v^{k,h}_{i-1})$. More precisely, let us set $u^{k,h}_{i,0}\coloneq u^{k,h}_{i-1}$ and $v^{k,h}_{i,0}\coloneq  v^{k,h}_{i-1}$. For $j\in\mathbb{N}$, $j\geq1$, we define iteratively two sequences of functions~$u^{k,h}_{i,j}$ and~$v^{k,h}_{i,j}$ as
	\begin{align}
		u^{k,h}_{i,j} &\coloneq \argmin \{\J_{h}(u,v^{k,h}_{i,j-1}) : u\in\A_{h}(w_{h}(t^{k}_{i}))\} \,, \label{10}\\
		v^{k,h}_{i,j} &\coloneq \argmin \{\J_{h}(u^{k,h}_{i,j},v) : v\in\F_{h} ,\, v\leq v^{k,h}_{i,j-1}\} \,. \label{11}
	\end{align}
	We notice that, since by assumption $v^{k,h}_{0}\geq0$, combining Propositions~\ref{Prop1} and~\ref{prop2} we deduce that~\eqref{10} and~\eqref{11} always admit unique solutions and, for every $k\in\mathbb{N}$ and every $i\in\{1,\ldots,k\}$, there exist $u^{k,h}_{i}, v^{k,h}_{i}\in\F_{h}$ such that $u^{k,h}_{i,j}\to u^{k,h}_{i}$ and $v^{k,h}_{i,j}\to v^{k,h}_{i}$ in~$\F_{h}$ as $j\to+\infty$. Moreover, $0\leq v^{k,h}_{i}\leq v^{k,h}_{i-1}\leq1$ in~$\Om$ and, again thanks to Proposition~\ref{prop2},
	\begin{alignat}{2}
		\J_{h}(u^{k,h}_{i},v^{k,h}_{i}) &\leq\J_{h}(u,v^{k,h}_{i}) &&\qquad \text{for every $u\in\A_{h}(w_{h}(t^{k}_{i}))$},\label{12}\\
		\J_{h}(u^{k,h}_{i},v^{k,h}_{i}) &\leq\J_{h}(u^{k,h}_{i},v) &&\qquad \text{for every $v\in\F_{h},\,v\leq v^{k}_{i}$ in $\Om$}.\label{13}
	\end{alignat}
	
	We now introduce the following piecewise constant interpolation functions: for every $k\in\mathbb{N}$ and every $i\in\{1,\ldots,k\}$ we set
	\begin{equation}\label{interpolant}
		 u^{k}_{h}(t) \coloneq u^{k,h}_{i-1} \qquad \text{and} \qquad v^{k}_{h}(t)\coloneq v^{k,h}_{i-1} \qquad \text{for every $t\in[t^{k}_{i-1},t^{k}_{i})$}.
	 \end{equation}
	
	We now show a discrete energy inequality for the sequence $(u^{k}_{h}(\cdot),v^{k}_{h}(\cdot))$. Namely, we prove that for every $k\in\mathbb{N}$, every $i\in\{1,\ldots,k\}$, and every $t\in[t^{k}_{i-1},t^{k}_{i})$
	\begin{align}
	 	\J_{h}(u_{h}^{k}(t),v_{h}^{k}(t)) \leq{}& \J_{h}(u_{h}^{k}(0),v_{h}^{k}(0)) \nonumber\\
	 	& + \int_{0}^{t^{k}_{i}} \int_{\Om} \bigl(\p_{h}(v_{h}^{k}(s))^{2} + \eta \bigr) \nabla u_{h}^{k}(s) \cdot \nabla \dot{w}_{h}(s) \,\di x\,\di s+R_{k} \label{14}
	\end{align}
	for some rest~$R_{k}$ converging to~$0$ as $k\to+\infty$. 
	
	Let us fix $k\in\mathbb{N}$ and $i\in\{1,\ldots,k\}$. By the construction~\eqref{10} and~\eqref{11} of the sequences~$(u^{k}_{i,j})_{j\in\mathbb{N}}$ and~$(v^{k}_{i,j})_{j\in\mathbb{N}}$, for every $j\geq2$ we have
	\begin{equation}\label{4.1}
		\J_{h}(u^{k,h}_{i,j},v^{k,h}_{i,j})\leq\J_{h}(u^{k,h}_{i,j},v^{k,h}_{i,j-1})\leq\J_{h}(u^{k,h}_{i,j-1},v^{k,h}_{i,j-1})\,.
	\end{equation}
	Iterating inequality~\eqref{4.1}, we deduce that 
	\begin{equation}\label{5}
		\J_{h}(u^{k,h}_{i,j},v^{k,h}_{i,j})\leq\J_{h}(u^{k,h}_{i,1},v^{k,h}_{i,1})\,.
	\end{equation}
	
	Since $u^{k,h}_{i-1}+w_{h}(t^{k}_{i})-w_{h}(t^{k}_{i-1})\in\A_{h}(w_{h}(t^{k}_{i}))$, again by definition~\eqref{10} and~\eqref{11} of the pair $(u^{k,h}_{i,1},\,v^{k,h}_{i,1})$, we can continue in~\eqref{5} obtaining
	\begin{align}
			&\J_{h} (u^{k,h}_{i,j},v^{k,h}_{i,j}) \leq \J_{h}(u^{k,h}_{i,1},v^{k,h}_{i-1}) \nonumber \\
			\leq{}& \J_{h}  \bigl(u^{k,h}_{i-1}+w_{h}(t^{k}_{i})-w_{h}(t^{k}_{i-1}),v^{k,h}_{i-1} \bigr) \nonumber \\
			= {} &\J_{h}(u^{k,h}_{i-1},v^{k,h}_{i-1}) + \int_{\Om} \bigl( \p_{h}((v^{k,h}_{i-1})^{2}) + \eta \bigr) \nabla u^{k,h}_{i-1} \cdot \nabla \bigl( w_{h}(t^{k}_{i}) - w_{h}(t^{k}_{i-1}) \bigr) \di x \nonumber \\
			& + \frac{1}{2} \int_{\Om} \bigl( \p_{h}((v^{k,h}_{i-1})^{2}) + \eta \bigr) \bigabs{\nabla \bigl( w_{h}(t^{k}_{i}) - w_{h}(t^{k}_{i-1}) \bigr)}^{2} \,\di x\,. \label{6}
	\end{align}
	
	Recalling the definition~\eqref{interpolant} of the interpolant functions~$u_{h}^{k}(\cdot), v_{h}^{k}(\cdot)$, the hypothesis $w_{h}\in W^{1,2}([0,T];\F_{h})$, and the condition $0\leq v^{k,h}_{i-1}\leq1$, we can estimate~\eqref{6} by
	\begin{align}
		\J_{h}(u^{k,h}_{i,j},v^{k,h}_{i,j}) \leq{}& \J_{h}(u^{k,h}_{i-1},v^{k,h}_{i-1}) + \int_{t^{k}_{i-1}}^{t^{k}_{i}} \int_{\Om} \bigl( \p_{h}((v_{h}^{k}(s))^{2}) + \eta \bigr) \nabla u_{h}^{k}(s) \cdot \nabla \dot{w}_{h}(s) \,\di x\,\di s \nonumber \\
		&+ \frac{T(1 + \eta)}{2k} \int_{t^{k}_{i-1}}^{t^{k}_{i}} \bignorm{\dot{w}_{h}(s)}^{2}_{H^{1}} \,\di s\,.\label{15}
	\end{align}
	Passing to the limit in~\eqref{15} as $j\to+\infty$ we get
	\begin{align}
		\J_{h}(u^{k,h}_{i},v^{k,h}_{i}) \leq{}& \J_{h}(u^{k,h}_{i-1},v^{k,h}_{i-1}) + \int_{t^{k}_{i-1}}^{t^{k}_{i}} \int_{\Om} \bigl( \p_{h}((v_{h}^{k}(s))^{2}) + \eta \bigr) \nabla u_{h}^{k}(s) \cdot \nabla \dot{w}_{h}(s) \,\di x\,\di s \nonumber \\
		&+ \frac{T(1 + \eta)}{2k} \int_{t^{k}_{i-1}}^{t^{k}_{i}} \bignorm{\dot{w}_{h}(s)}^{2}_{H^{1}} \,\di s\,.\label{16}
	\end{align}
	Finally, iterating inequality~\eqref{16} and setting
	\begin{equation*}
		R_{k} \coloneq \frac{T(1 + \eta)}{2k}\int_{0}^{T} \bignorm{\dot{w}_{h}(s)}^{2}_{H^{1}}\,\di s\,,
	\end{equation*}
	we deduce~\eqref{14} for every $t\in[t^{k}_{i-1},t^{k}_{i})$.
	
	Let us now perform the passage to the time-continuous limit. In view of the discrete energy inequality~\eqref{14}, of the hypothesis $w_{h}\in W^{1,2}([0,T];\F_{h})$, and of the uniform bound $0\leq v_{h}^{k}(t)\leq 1$ for $k\in\mathbb{N}$ and $t\in[0,T]$, we deduce that
	\begin{equation}\label{17}
		\sup_{\substack{t \in [0,T] \\ k \in \mathbb{N}}} \norm{u_{h}^{k}(t)}_{H^{1}} < +\infty \qquad \text{and} \qquad \sup_{\substack{t \in [0,T] \\ k \in \mathbb{N}}} \norm{ v_{h}^{k}(t)}_{H^1} < +\infty\,.
	\end{equation}
	
	Since the functions $v_{h}^{k}\colon[0,T]\to\F_{h}$ are non-increasing in time and uniformly bounded, we can apply Helly's Selection Principle (see~\cite[Section~VIII.~3.~§4]{Natanson1964}). Consequently, there exists a non-increasing function $v_{h}\colon[0,T]\to\F_{h}$ such that, along a subsequence~$k_{l}$, $v_{h}^{k_{l}}(t)\to v_{h}(t)$ in~$\F_{h}$ for every $t\in [0,T]$. In particular,~$v_{h}$ satisfies the irreversibility condition~\eqref{irreversCond} of Definition~\ref{quasi-static}.
	 
	In view of the first inequality in~\eqref{17}, for every $t\in[0,T]$ there exists a function $u_{h}(t)\in\F_{h}$ such that, up to a subsequence, $u_{h}^{k_{l}}(t)\to u_{h}(t)$ in~$\F_{h}$. Applying Lemma~\ref{lemma1} we obtain that~$u_{h}(t)$ and~$v_{h}(t)$ satisfy the stability conditions~\eqref{stab1} and~\eqref{stab2} of Definition~\ref{quasi-static}. In particular, $u_h (t)$ being the unique solution of~\eqref{stab1}, we deduce that for every $t\in [0,T]$ the whole sequence~$u_{h}^{k_{l}}(t)$ converges to~$u_{h}(t)$ in~$\F_{h}$ as $k_{l}\to+\infty$.
	
	Finally, in order to prove the energy inequality~\eqref{energyineq} of Definition~\ref{quasi-static}, we only need to pass to the limit in the discrete energy inequality~\eqref{14}. Since all the convergences described above are strong and hold for every $t\in[0,T]$, letting $k_l\to+\infty$ we deduce~\eqref{energyineq}, and the proof is thus concluded.
\end{proof}

\section{From space-discrete to space-continuous evolution}\label{sectionevolution}
This section is devoted to the proof of Theorem~\ref{mainthm}. In particular, we show that any limit of a sequence $(u_{h},v_{h})\colon[0,T]\to\F_{h}{\,\times\,}\F_{h}$ of finite-dimensional quasi-static evolutions is a quasi-static evolution in the sense of Definition~\ref{quasi-staticcontinuous}. 

Before proving Theorem~\ref{mainthm}, we show two useful properties. Firstly, we state a uniform estimate on the operator~$\p_{h}$.  Secondly, we prove a stability property of the functionals~$\J_{h}$ and~$\J$ analogous to Lemma~\ref{lemma1}, and which takes into account the ``convergence'' of the finite dimensional spaces $\F_{h}$ to $H^{1}(\Om)$ as $h\to+\infty$. 

Although the following result is standard, we provide its proof in the \ref{ProofLemma0} for the sake of completeness.
\begin{lemma}\label{lemma0}
	Let $h\in\mathbb{N}{\setminus}\{0\}$, let $\p_{h}\colon C(\overline{\Om})\to H^{1}(\Om)$ be the operator defined by~\eqref{ph}, and let $g\in C^{2}(\R)$. Then, for every~$M>0$ there exists a positive constant~$C=C(g,M)$ depending only on~$g$ and~$M$ such that for every~$v\in\F_{h}$ with~$\|v\|_{\infty}\leq M$
	\begin{equation}\label{discreteestimate}
		\bignorm{(g\circ v) - \p_{h}( g\circ v)}_{1} \leq Ch^{-2} \norm{\nabla v}_{2}^{2} \,.
	\end{equation}
\end{lemma}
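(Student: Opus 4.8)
The estimate compares a nonlinear function $g \circ v$ with its Lagrange interpolant on the finite element mesh $\T_h$. Since the claim is local in nature (it is an integral over $\Om$, which decomposes as a sum over triangles $K \in \T_h$), the plan is to prove the inequality triangle by triangle and then sum. On a single triangle $K$ of diameter $\sim h^{-1}$, the standard interpolation error estimate for the $P^1$ Lagrange interpolant gives
\begin{equation*}
	\norm{f - \p_h f}_{L^1(K)} \leq C h^{-2} \, \abs{f}_{W^{2,1}(K)}
\end{equation*}
for any $f \in W^{2,1}(K)$, where the constant $C$ is universal (independent of $h$ and $K$, thanks to the uniformity of the triangulation and a scaling argument to a reference triangle). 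Applying this with $f = g \circ v$ reduces the problem to estimating the second-order derivatives of $g \circ v$ on each $K$.

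The key computation is then the chain rule: since $v$ is affine on $K$, we have $\nabla v$ constant on $K$ and $D^2 v = 0$ there, so
\begin{equation*}
	D^2 (g \circ v) = (g'' \circ v) \, \nabla v \otimes \nabla v \qquad \text{a.e.\ on } K\,.
\end{equation*}
Hence $\abs{D^2 (g \circ v)} \leq \norm{g''}_{L^\infty([-M,M])} \abs{\nabla v}^2$ pointwise on $K$, using the bound $\norm{v}_\infty \leq M$. Integrating and summing over all $K \in \T_h$ yields
\begin{equation*}
	\bignorm{(g \circ v) - \p_h(g \circ v)}_1 \leq C h^{-2} \sum_{K \in \T_h} \int_K \abs{D^2(g \circ v)} \,\di x \leq C h^{-2} \norm{g''}_{L^\infty([-M,M])} \norm{\nabla v}_2^2\,,
\end{equation*}
which is the desired inequality with $C = C(g,M)$ absorbing the universal interpolation constant and $\norm{g''}_{L^\infty([-M,M])}$. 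One should also briefly note that $g \circ v$ is continuous on $\overline{\Om}$ and piecewise $C^2$ (indeed piecewise $W^{2,\infty}$) on the triangulation, so that $\p_h(g \circ v)$ is well defined and the piecewise interpolation estimate applies on each $K$ separately.

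The only mildly delicate point — and the one I would expect to require a little care — is the justification of the local interpolation estimate when $g \circ v$ is only piecewise smooth across the mesh: one must work on each closed triangle $K$ individually, where $g \circ v$ restricted to $K$ is genuinely $W^{2,\infty}(K)$ (as $v|_K$ is affine and $g \in C^2$), and invoke the Bramble--Hilbert / scaling argument there, rather than trying to apply a global Sobolev estimate. Apart from that, everything is the standard FEM quasi-interpolation machinery combined with the trivial chain rule afforded by the affineness of $v$ on each element; I would cite \cite{Quarteroni1994} for the reference-element interpolation bound and the uniform-mesh scaling.
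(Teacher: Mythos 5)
Your proof is correct, and it reaches the estimate by the same element-by-element decomposition and the same underlying observation as the paper, but via a different mechanism for the per-triangle bound. The paper does not invoke any interpolation theorem: on each triangle $K$ it writes $v(x)=v(0)+\nabla v\rvert_K\cdot x$, Taylor-expands $g$ around $v(0)$ both at a generic point $x\in K$ and at the two remaining vertices, writes out $\p_h(g\circ v)$ explicitly in terms of the vertex values, and observes that the affine parts cancel, leaving only second-order remainders bounded pointwise by $C(g,M)\,h^{-2}\abs{\nabla v\rvert_K}^2$; integrating over $K$ and summing (using that $\nabla v$ is constant on $K$, so $\abs{\nabla v\rvert_K}^2\abs{K}=\int_K\abs{\nabla v}^2\,\di x$) gives~\eqref{discreteestimate}. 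You instead quote the standard local Lagrange interpolation estimate $\norm{f-\p_h f}_{L^1(K)}\leq C h^{-2}\abs{f}_{W^{2,1}(K)}$ from the Bramble--Hilbert/scaling machinery of \cite{Quarteroni1994} and reduce everything to the chain-rule identity $D^2(g\circ v)=(g''\circ v)\,\nabla v\otimes\nabla v$ on each element, where $D^2 v=0$ because $v$ is affine; this is legitimate since $g\circ v$ restricted to each closed triangle is $W^{2,\infty}$ and the global interpolant agrees with the local one on $K$, exactly the point you flag. The two routes buy slightly different things: yours is more modular and generalizes immediately (e.g.\ to other meshes or norms, and it makes transparent that the relevant constant is $\sup_{[-M,M]}\abs{g''}$ times a shape-regularity constant), while the paper's explicit Taylor computation is self-contained and sidesteps having to justify the applicability of the textbook interpolation theorem to a function that is only piecewise smooth across the mesh --- a point you handle correctly by localizing to a single element, where no borderline $W^{2,1}$ embedding issue even arises.
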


\begin{proof}
	See \ref{ProofLemma0}.	
\end{proof}

\begin{lemma}\label{lemma2}
	For every $h\in\mathbb{N}$, let $w_{h}, u_{h},v_{h}\in\F_{h}$ be such that $u_{h}\in\A_{h}(w_{h})$, $0\leq v_{h}\leq1$ in~$\Om$, and
	\begin{alignat}{2}
		\J_{h}(u_{h},v_{h}) &\leq \J_{h}(u,v_{h}) &&\qquad \text{for every $u\in\A_{h}(w_{h})$}, \label{20}\\
		\J_{h}(u_{h},v_{h}) &\leq \J_{h}(u_{h},v) &&\qquad \text{for every $v\in\F_{h}$ such that $v\leq v_{h}$}. \label{21}
	\end{alignat}
	Assume that $w_{h}\to\bar{w}$ in~$H^{1}(\Om)$, $u_{h}\rightharpoonup \bar{u}$ and $v_{h}\rightharpoonup \bar{v}$ weakly in~$H^{1}(\Om)$ as $h\to+\infty$. Then $\bar{u}\in\A(\bar{w})$ and
	\begin{alignat}{2}
		\J(\bar{u},\bar{v}) &\leq \J(u,\bar{v}) &&\qquad \text{for every $u\in \A(\bar{w})$}, \label{22} \\
		\J(\bar{u},\bar{v}) &\leq \J(\bar{u},v) &&\qquad \text{for every $v\in H^{1}(\Om)$ with $v\leq\bar{v}$}. \label{23}
	\end{alignat}
	Moreover, $u_{h}\to\bar{u}$ strongly in~$H^{1}(\Om)$.
\end{lemma}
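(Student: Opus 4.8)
The strategy is to pass to the limit in the two discrete stability inequalities \eqref{20}--\eqref{21} separately, exploiting lower semicontinuity on the left and a recovery-sequence construction on the right, and then to upgrade the weak convergence of $u_h$ to strong convergence via a convexity/energy-convergence argument. The crucial new ingredient compared with Lemma~\ref{lemma1} is that the functionals change with $h$ (they contain $\p_h$), so I must control the discrepancy between $\J_h$ and $\J$ using Lemma~\ref{lemma0}, together with the interpolation estimates for $\p_h$ on the approximating sequences $w_h$.

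First I would record that $\bar u\in\A(\bar w)$: since $u_h-w_h\in\F_h\cap H^1_0(\Om)$ and this space sits inside $H^1_0(\Om)$ which is weakly closed, the weak limit $\bar u-\bar w$ lies in $H^1_0(\Om)$, i.e.\ $\bar u=\bar w$ on $\partial\Om$ in the trace sense. Next, to prove \eqref{22}, fix $u\in\A(\bar w)$ and build a recovery sequence $u_h':=\p_h(u)+(w_h-\p_h(\bar w))\in\A_h(w_h)$ (using density of smooth functions and the quasi-interpolation estimates $\|\p_h\varphi-\varphi\|_{H^1}\to0$ from \cite{Quarteroni1994}; by an approximation argument it suffices to treat $u\in C^\infty(\overline\Om)$ first and then pass to a general $u$). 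Then $u_h'\to u$ strongly in $H^1(\Om)$. From \eqref{20}, $\J_h(u_h,v_h)\le\J_h(u_h',v_h)$. On the right-hand side, $\J_h(u_h',v_h)\to\J(u,\bar v)$: the elastic term is handled by writing $\E_h(u_h',v_h)=\tfrac12\int_\Om(\p_h(v_h^2)+\eta)|\nabla u_h'|^2$, noting $\nabla u_h'\to\nabla u$ strongly in $L^2$, $\p_h(v_h^2)\to\bar v^2$ strongly in $L^1$ (combine $v_h^2\to\bar v^2$ in $L^1$ from the compact embedding $H^1\hookrightarrow L^4$ with Lemma~\ref{lemma0} and the uniform bound $\|v_h\|_\infty\le 1$ plus \eqref{17}-type bounds on $\|\nabla v_h\|_2$, which follow from $0\le v_h\le1$ and the stability inequality \eqref{21} tested with $v\equiv$ const — in fact the $H^1$-bound on $v_h$ is assumed via weak convergence), and similarly $\D_h(v_h)\to\D(\bar v)$ because $\p_h((1-v_h)^2)\to(1-\bar v)^2$ in $L^1$ and $\|\nabla v_h\|_2^2$ has $\liminf$ bounded below by $\|\nabla\bar v\|_2^2$ — but here I need an \emph{upper} bound; it is cleaner to keep $\D_h(v_h)$ on both sides and cancel it, since it appears identically in $\J_h(u_h,v_h)$ and $\J_h(u_h',v_h)$. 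So the inequality reduces to $\E_h(u_h,v_h)\le\E_h(u_h',v_h)$; on the left I use weak lower semicontinuity: $\liminf_h\E_h(u_h,v_h)\ge\E(\bar u,\bar v)$, which holds because $(\p_h(v_h^2)+\eta)^{1/2}\nabla u_h$ converges weakly in $L^2$ to $(\bar v^2+\eta)^{1/2}\nabla\bar u$ (product of an $L^\infty$-bounded sequence converging a.e./strongly in $L^2$ with a weakly-$L^2$-convergent sequence), and on the right $\E_h(u_h',v_h)\to\E(u,\bar v)$ strongly by the convergences just listed. Passing to the limit yields \eqref{22}.

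For \eqref{23}, fix $v\in H^1(\Om)$ with $v\le\bar v$ a.e.; the obstacle is that the discrete competitor must satisfy $v_h'\le v_h$ \emph{in $\Om$} (pointwise on nodes, hence everywhere since affine on simplices), while only $v\le\bar v$ is known. I would set $v_h':=\p_h(\min\{v,\bar v\})+(v_h-\p_h(v_h))$ — but $\p_h(v_h)=v_h$, so more carefully one takes $v_h':=v_h-\p_h((\bar v-v)^+)\cdot\chi$... the honest construction is: pick a smooth $\tilde v\le\bar v$ close to $v$ in $H^1$ (possible since $\{v\le\bar v\}$ is convex and closed, approximate by mollification being careful with the constraint — here one typically uses that $\bar v$ itself can be taken in the closure and truncates $\tilde v=\min\{v^{(\delta)},\bar v\}$), then set $v_h':=\p_h(\tilde v)-\p_h(\bar v)+v_h$; since $\p_h$ is monotone (it preserves nodal ordering) and $\tilde v\le\bar v$, we get $\p_h(\tilde v)\le\p_h(\bar v)$, hence $v_h'\le v_h$ in $\Om$, and $v_h'\to\tilde v$ strongly in $H^1$ provided $\p_h(\bar v)\to\bar v$ in $H^1$ — which is where I need $\bar v\in H^1$ and the interpolation estimate (valid after a further smoothing of $\bar v$, absorbed into the $\delta$-approximation). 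Then \eqref{21} gives $\J_h(u_h,v_h)\le\J_h(u_h,v_h')$; the elastic terms $\E_h(u_h,v_h)$ and $\E_h(u_h,v_h')$ must now both be handled — the left one by lsc as above, the right one $\E_h(u_h,v_h')=\tfrac12\int(\p_h((v_h')^2)+\eta)|\nabla u_h|^2$: since $\nabla u_h\rightharpoonup\nabla\bar u$ only weakly, I cannot directly pass to the limit in a product with another weakly convergent factor. The fix is to use the \emph{strong} convergence $u_h\to\bar u$ established in the previous step (note: \eqref{23} can be proved \emph{after} strong convergence of $u_h$, which itself only used \eqref{22}); then $|\nabla u_h|^2\to|\nabla\bar u|^2$ in $L^1$ and $\p_h((v_h')^2)\to\tilde v^2$ in $L^\infty$-weak-$*$/$L^1$, so $\E_h(u_h,v_h')\to\E(\bar u,\tilde v)$, while $\D_h(v_h')\to\D(\tilde v)$; passing to the limit and then letting $\delta\to0$ (so $\tilde v\to v$ in $H^1$) yields \eqref{23}.

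Finally, the strong convergence $u_h\to\bar u$ in $H^1(\Om)$: test \eqref{20} with the recovery sequence $u_h'$ for $u=\bar u$ (so $u_h'\to\bar u$ strongly), giving $\E_h(u_h,v_h)\le\E_h(u_h',v_h)\to\E(\bar u,\bar v)$; combined with $\liminf_h\E_h(u_h,v_h)\ge\E(\bar u,\bar v)$ this forces $\E_h(u_h,v_h)\to\E(\bar u,\bar v)$, i.e.\ $\int_\Om(\p_h(v_h^2)+\eta)|\nabla u_h|^2\,\di x\to\int_\Om(\bar v^2+\eta)|\nabla\bar u|^2\,\di x$. Since $\eta>0$ gives uniform ellipticity and $\p_h(v_h^2)+\eta\to\bar v^2+\eta$ strongly while $\nabla u_h\rightharpoonup\nabla\bar u$ weakly in $L^2$, convergence of the quadratic energies plus weak convergence upgrades to strong convergence of $(\p_h(v_h^2)+\eta)^{1/2}\nabla u_h$ in $L^2$ (a Hilbert-space argument: weak convergence $+$ norm convergence $\Rightarrow$ strong convergence, after checking the weak limit of the weighted gradient is the weighted limit gradient), and dividing by the factor $\ge\sqrt\eta$ gives $\nabla u_h\to\nabla\bar u$ in $L^2$, hence $u_h\to\bar u$ in $H^1(\Om)$. \textbf{Main obstacle.} The delicate point is not any single limit passage but the bookkeeping around the $\p_h$ operators: ensuring that the weights $\p_h(v_h^2)$, $\p_h((1-v_h)^2)$ converge strongly in $L^1$ (which needs Lemma~\ref{lemma0} together with the uniform $H^1$-bound and $L^\infty$-bound on $v_h$), and constructing discrete competitors that respect the one-sided constraint $v_h'\le v_h$ \emph{pointwise} — this requires exploiting the monotonicity of the Lagrange interpolant $\p_h$ and a careful smoothing of the constrained test function $v$, and is the step most likely to need the $\delta$-approximation argument sketched above.
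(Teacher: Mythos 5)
Your overall architecture for \eqref{22} and for the strong convergence of $u_h$ matches the paper's: compare with a recovery competitor of the form ``interpolated perturbation $+$ $w_h$'', use Lemma~\ref{lemma0} to get $\p_h(v_h^2)\to\bar v^2$ in $L^p$, obtain the liminf bound on the left and energy convergence for the competitor, and upgrade to strong $H^1$ convergence by testing with $\bar u$; the paper does the same (citing a lower-semicontinuity theorem where you argue directly with the weighted gradient), and it likewise proves \eqref{22} and the strong convergence of $u_h$ before \eqref{23}. However, there is a genuine gap in your limit passage for \eqref{23}: the claim $\D_h(v_h')\to\D(\tilde v)$ is false under the standing assumptions. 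Indeed $\nabla v_h'=\nabla\bigl(\p_h(\tilde v)-\p_h(\bar v)\bigr)+\nabla v_h$ with $v_h\rightharpoonup\bar v$ only \emph{weakly} in $H^1$ (strong $H^1$ convergence of $v_h$ is neither assumed, nor proved, nor part of the conclusion), so $\int_\Om\abs{\nabla v_h'}^2\,\di x$ only satisfies a liminf inequality, whereas it sits on the majorant side of $\J_h(u_h,v_h)\leq\J_h(u_h,v_h')$, where you need a limsup bound. Concretely, if $\int_\Om\abs{\nabla v_h}^2\,\di x\to \int_\Om\abs{\nabla\bar v}^2\,\di x+c$ with $c>0$, your asserted convergence of the right-hand side fails, and the chain ``$\J(\bar u,\bar v)\le\liminf\J_h(u_h,v_h)\le\limsup\J_h(u_h,v_h')\le\J(\bar u,\tilde v)$'' breaks at the last step.

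The repair is exactly the cancellation you already used for \eqref{22}, applied now to the \emph{difference} of the two sides rather than to each side separately: expand the quadratic terms so that $\tfrac12\int_\Om\abs{\nabla v_h}^2\,\di x$ (the only purely weakly convergent quantity) cancels between $\J_h(u_h,v_h)$ and $\J_h(u_h,v_h')$, leaving the harmless terms $\tfrac12\int_\Om\abs{\nabla\p_h(\varphi^k)}^2\,\di x+\int_\Om\nabla\p_h(\varphi^k)\cdot\nabla v_h\,\di x$, where the cross term passes to the limit by strong--weak pairing; this is precisely the paper's displays \eqref{26}--\eqref{28}, after which the identity $\tfrac12\abs{\nabla(v-\bar v)}^2+\nabla(v-\bar v)\cdot\nabla\bar v=\tfrac12\abs{\nabla v}^2-\tfrac12\abs{\nabla\bar v}^2$ reassembles \eqref{23}. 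Two further points to tighten: $\p_h$ is only defined on $C(\overline\Om)$, and in two dimensions neither $\bar w$ nor $\bar v$ need be continuous, so expressions like $\p_h(\bar w)$ and $\p_h(\bar v)$ must be avoided; the paper does so by perturbing with smooth functions only (a $\varphi\in C^\infty_c(\Om)$ added to $w_h$ for \eqref{22}, and a smooth $\varphi^k\le0$ approximating $v-\bar v$ added to $v_h$ for \eqref{23}, whose nonpositive interpolant automatically respects the constraint $v_h^k\le v_h$), which also spares you the $\min$-smoothing and the monotonicity-of-$\p_h$ detour in your construction. With these corrections your argument coincides with the paper's proof.
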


\begin{proof}
	Let us prove~\eqref{22}. Let $u\in H^{1}(\Om)$ be such that $u=\bar{w}$ on~$\partial\Om$. By the interpolation error estimates in, e.g., \cite[Theroem~3.4.2]{Quarteroni1994}, for every $\varphi\in C^{\infty}_{c}(\Om)$ there exists a sequence $\varphi_{h}\in\F_{h}$ such that $\varphi_{h}=0$ on~$\partial\Om$ and $\varphi_{h}\to \varphi$ in $H^{1}(\Om)$ as $h\to \infty$. Let us consider as a competitor in~\eqref{20} the function $\psi_{h}\coloneq \varphi_{h}+w_{h}$. For such a $\psi_h$ we have
	\begin{equation}\label{24}
		\int_{\Om} \bigl( \p_{h}(v_{h}^{2}) + \eta \bigr) \abs{\nabla u_{h}}^{2} \,\di x \leq \int_{\Om} \bigl( \p_{h}(v_{h}^{2}) + \eta \bigr) \abs{\nabla \psi_{h}}^{2} \,\di x \,.
	\end{equation}
	
	It is clear that $\psi_{h} \to \psi \coloneq \varphi + \bar{w}$ in $H^{1}(\Om)$ and, by Lemma~\ref{lemma0}, $\p_{h}(v_{h}^{2}) \to \bar{v}^{2}$ strongly in~$L^{p}(\Om)$ for every $p\in[1, +\infty)$. Therefore, applying~\cite[Theorem 7.5]{Fonseca2007} and passing to the limit as $h \to +\infty$ in~\eqref{24}, we get
	\begin{align}
		\int_{\Om} (\bar{v}^{2} + \eta) \abs{\nabla \bar{u}}^{2} \,\di x &\leq \liminf_{h\to \infty} \int_{\Om} \bigl( \p_{h}(v_{h}^{2}) + \eta \bigr) \abs{\nabla u_{h}}^{2}\,\di x \nonumber\\
		&\leq \limsup_{h\to \infty} \int_{\Om} \bigl( \p_{h}(v_{h}^{2}) + \eta \bigr) \abs{\nabla \psi_{h}}^{2} \,\di x \nonumber \\
		&\leq \int_{\Om} \bigl( \bar{v}^{2} + \eta \bigr) \bigabs{\nabla \psi}^{2} \,\di x \,. \label{25}
	\end{align}
	By density, we have that the chain of inequalities~\eqref{25} holds for every $\psi \in \A(\bar w)$. Moreover, it is easy to see that~\eqref{25} is equivalent to~\eqref{22}.
	
	Specifying~\eqref{25} for~$\psi=\bar{u}$, we get that
	\begin{equation*}
		\lim_{h\to \infty} \int_{\Om} \bigl( \p_{h}(v_{h}^{2}) + \eta \bigr) \abs{\nabla u_{h}}^{2} \,\di x = \int_{\Om} (\bar{v}^{2} + \eta) \abs{\nabla \bar{u}}^{2} \,\di x\,,
	\end{equation*}
	which implies the strong convergence of~$u_{h}$ to~$\bar{u}$ in~$H^{1}(\Om)$.
	
	We now prove~\eqref{23}. Let us first consider a competitor $v\in H^{1}(\Om)\cap L^{\infty}(\Om)$, $v\leq\bar{v}$ in~$\Om$. Let $\varphi^{k}\in C^{\infty}(\overline{\Om})$ be such that $\varphi^{k}\leq0$ in~$\Om$ and $\varphi^{k}\to v-\bar{v}$ in~$H^{1}(\Om)$ as $k\to+\infty$. Let us set $v^{k}_{h}\coloneq \p_{h}(\varphi^{k})+v_{h}$. Then, $v^{k}_{h}\leq v_{h}$ for every $h,k\in\mathbb{N}$, $v_{h}^{k}\in\F_{h}$, and $v^{k}_{h}\rightharpoonup \bar{v}+\varphi^{k}$ weakly in~$H^{1}(\Om)$ as $h\to+\infty$. By the quadratic structure of~$\J_{h}$, by~\eqref{21}, and by the definition of~$v^{k}_{h}$,
	\begin{align}
		& \frac{1}{2} \int_{\Om} \bigl( \p_{h}(v_{h}^{2}) + \eta \bigr) \abs{\nabla u_{h}}^{2} \,\di x + \frac{1}{2} \int_{\Om} \p_{h}\bigl( (1-v_{h})^{2} \bigr) \,\di x \nonumber \\
		\leq{}& \frac{1}{2} \int_{\Om} \bigl( \p_{h}((v^{k}_{h})^{2}) + \eta \bigr) \abs{\nabla u_{h}}^{2} \,\di x + \frac{1}{2} \int_{\Om} \p_{h}\bigl( (1-v^{k}_{h})^{2} \bigr) \,\di x \nonumber \\
		&+ \frac{1}{2} \int_{\Om} \bigabs{\nabla \p_{h}(\varphi^{k})}^{2} \,\di x + \int_{\Om} \nabla \p_{h}(\varphi^{k}) \cdot \nabla v_{h} \,\di x\,. \label{26}
	\end{align}
	Since $u_{h}\to\bar{u}$ and $\p_{h}(\varphi^{k})\to\varphi^{k}$ strongly in~$H^{1}(\Om)$, $\p_{h}(v_{h}^{2})\to \bar{v}^{2}$ and $\p_{h}((v^{k}_{h})^{2})\to (\varphi^{k}+\bar{v})^{2}$ strongly in~$L^{p}(\Om)$ for every $p\in[1,+\infty)$ (see Lemma~\ref{lemma0}), and $v_{h}\rightharpoonup \bar{v}$ weakly in~$H^{1}(\Om)$, passing to the limit as $h\to+\infty$ in~\eqref{26} we deduce that
	\begin{align}
		&\frac{1}{2} \int_{\Om} (\bar{v}^{2} + \eta) \abs{\nabla \bar{u}}^{2} \,\di x + \frac{1}{2} \int_{\Om} (1-\bar{v})^{2} \,\di x \nonumber \\
		\leq{}& \frac{1}{2}  \int_{\Om} \bigl( (\varphi^{k} + \bar{v})^{2} + \eta \bigr) \abs{\nabla \bar{u}}^{2} \,\di x + \frac{1}{2}\int_{\Om} \bigl( 1 - (\varphi^{k} + \bar{v}) \bigr)^{2}\,\di x \nonumber \\
		&+ \frac{1}{2} \int_{\Om} \abs{\nabla \varphi^{k}}^{2} \,\di x + \int_{\Om} \nabla \varphi^{k} \cdot \nabla \bar{v} \,\di x\,. \label{27}
	\end{align}
	If we let $k\to+\infty$ in~\eqref{27}, recalling that $\varphi^{k}\to v-\bar{v}$ in~$H^{1}(\Om)$, we obtain
	\begin{align}
		& \frac{1}{2} \int_{\Om} (\bar{v}^{2} + \eta) \abs{\nabla \bar{u}}^{2} \,\di x + \frac{1}{2} \int_{\Om} (1 - \bar{v})^{2} \,\di x \nonumber \\
		\leq{}& \frac{1}{2} \int_{\Om} (v^{2} + \eta) \abs{\nabla \bar{u}}^{2} \,\di x + \frac{1}{2} \int_{\Om} (1-v)^{2} \,\di x \nonumber \\
		& + \frac{1}{2} \int_{\Om} \bigabs{\nabla (v-\bar{v})}^{2} \,\di x + \int_{\Om} \nabla (v-\bar{v}) \cdot \nabla \bar{v} \,\di x\,. \label{28}
	\end{align}
	Rearranging the last two terms in the right-hand side of~\eqref{28}, we get the stability condition~\eqref{23} for $v\in H^{1}(\Om)\cap L^{\infty}(\Om)$ with $v\leq\bar{v}$. By a truncation argument, we get the same conclusion for $v\in H^{1}(\Om)$ with $v\leq\bar{v}$.
\end{proof}

We are now ready to prove Theorem~\ref{mainthm}.

\begin{proof}[Proof of Theorem~\ref{mainthm}]
	As already mentioned in Remark~\ref{rmk2}, in order to prove the existence of a quasi-static evolution in the sense of Definition~\ref{quasi-staticcontinuous} we show that any sequence of finite-dimensional quasi-static evolutions $(u_{h},v_{h})\colon[0,T]\to\F_{h}$ converges, up to a subsequence, to a quasi-static evolution as the mesh parameter~$h$ tends to~$+\infty$.
	
	For every $h\in\mathbb{N}{\setminus}\{0\}$, we need first to find the right sequence of finite-dimensional quasi-static evolutions~$(u_{h},v_{h})$ starting from a suitable initial datum $u_{0,h}, v_{0,h}\in\F_{h}$ and with a suitable boundary Dirichlet condition $w_{h}\colon[0,T]\to\F_{h}$.
	
	As mentioned in Section~\ref{Discrete}, there exists a sequence $w_{h}\in W^{1,2}([0,T];H^{1}(\Om))$ such that $w_{h}\in W^{1,2}([0,T];\F_{h})$ and $w_{h}\to w$ in $W^{1,2}([0,T];H^{1}(\Om))$ as $h\to+\infty$ (see~\cite{Quarteroni1994}). In particular, the last convergence implies that $w_{h}(t)\to w(t)$ in $H^{1}(\Om)$ for every $t\in[0,T]$ and $\dot{w}_{h}(t)\to\dot{w}(t)$ in $H^{1}(\Om)$ for a.e. $t\in[0,T]$. Again by~\cite{Quarteroni1994}, we can also find two sequences~$u_{0,h}, v_{0,h}\in\F_{h}$ such that $u_{0,h}\in\A_{h}(w_h(0))$, $0\leq v_{0,h}\leq1$ in~$\Om$, and, as $h\to+\infty$, $u_{0,h}\to u_{0}$ and $v_{0,h}\to v_{0}$ in~$H^{1}(\Om)$.
	
	By Theorem~\ref{thm1}, for every~$h\in\mathbb{N}{\setminus}\{0\}$ there exists a finite-dimensional quasi-static evolution~$(u_{h},v_{h})\colon[0,T]\to\F_{h}{\,\times\,}\F_{h}$ with $u_{h}(0)=u_{0,h}$, $v_{h}(0)=v_{0,h}$, and $u_{h}(t) \in \A(w_{h}(t))$ for every $t\in[0,T]$.
	
	In view of~\eqref{stab1}--\eqref{energyineq} and of the construction of~$u_{0,h},v_{0,h}$, and~$w_{h}$, we have that
	\begin{equation}\label{18}
		\sup_{\substack{h>0\\ t\in[0,T]}} \norm{u_{h}(t)}_{H^{1}} < +\infty \qquad \text{and} \qquad \sup_{\substack{h>0\\ t\in[0,T]}} \norm{v_{h}(t)}_{H^1} <+\infty\,.
	\end{equation}
	As in the proof of Theorem~\ref{thm1}, since the sequence $v_{h}\colon[0,T]\to H^{1}(\Om)$ is such that~\eqref{18} holds,~$t\mapsto v_{h}(t)$ is non-increasing, and, for every $t\in[0,T]$,~$v_{h}(t)$ takes values in~$[0,1]$, applying a generalized version of Helly's Selection Theorem (see~\cite[Theorem~2.3]{Fleischer2001}, we find a non-increasing function $v\colon[0,T]\to H^{1}(\Om)$ such that, along a suitable subsequence $h_{k}\to+\infty$, for every $t\in[0,T]$ $v_{h_{k}}(t)$ converges to~$v(t)$ weakly in~$H^{1}(\Om)$ and strongly in~$L^{p}(\Om)$ for every $p\in[1,+\infty)$. In particular, $v(0)=v_{0}$ and $0\leq v(t)\leq1$ in~$\Om$ for every~$t\in[0,T]$, hence condition~(i) of Definition~\ref{quasi-staticcontinuous} is satisfied.
	
	In view of~\eqref{18}, for every $t\in[0,T]$ we have that, up to a subsequence (possibly dependent on~$t$), $u_{h_{k}}(t)\rightharpoonup u(t)$ weakly in~$H^{1}(\Om)$ for some $u(t)\in \A(w(t))$. By Lemma~\ref{lemma2}, we deduce that the pair $(u(t),v(t))$ satisfies the stability conditions~\eqref{contstabu} and~\eqref{contstabv}, and $u_{h_{k}}(t)\to u(t)$ strongly in $H^{1}(\Om)$. Moreover, by Lemma~\ref{lemma2} and by uniqueness of solution of the minimum problem
	\begin{equation*}
		\min\,\{\J(u,v(t)):\,u\in H^{1}(\Om),\,u\in\A(w(t))\}\,,
	\end{equation*}
	we have that the whole sequence~$u_{h_{k}}(t)$ converges to~$u(t)$ strongly in~$H^{1}(\Om)$ for every $t\in[0,T]$.
	
	Finally, in order to prove the energy inequality~\eqref{energyineq2}, we need to pass to the limit in the finite-dimensional energy inequality~\eqref{energyineq} as $h_{k}\to+\infty$. Since $u_{0,h}\to u_{0}$ and $v_{0,h}\to v_{0}$ and Lemma~\ref{lemma0} holds, we have that $\J_{h_{k}}(u_{0,h_{k}},v_{0,h_{k}})\to\J(u_{0},v_{0})$. Again by Lemma~\ref{lemma0}, $\p_{h_{k}}(v_{h_{k}}^{2}(t))\to v(t)$ in~$L^{p}(\Om)$ for every $p\in[1,+\infty)$ and every $t\in[0,T]$. Finally, by construction, $w_{h}\to w$ in $W^{1,2}([0,T]; H^{1}(\Om))$. Hence, passing to the limit in~\eqref{energyineq} as $h_{k}\to+\infty$ and applying the dominated convergence theorem, we get~\eqref{energyineq2}, and this concludes the proof of the theorem.
\end{proof}

\section{Numerical Experiments}\label{Numerics}
In this section we illustrate numerically the previous findings using the example of the phase-field model of brittle fracture. For these experiments we use the original Ambrosio-Tortorelli functional~$\J_{\varepsilon}$ defined in~\eqref{AmbTort}, whose discretized version is still denoted by~$\J_h$. We recall it here for the reader's convenience: for all $u,v \in \F_h$
\begin{equation} \label{discFunc}
	\J_h (u,v) \coloneq \frac{1}{2}\int_{\Omega} \bigl( \p_h(v^2) + \eta_\varepsilon \bigr) \abs{\nabla u}^2 \, \dd x + \! \int_{\Omega} \Big(\kappa \varepsilon\abs{\nabla v}^2 + \frac{\kappa}{4\varepsilon}\p_h \bigl( (1-v)^2 \bigr)\Big) \, \dd x \,.
\end{equation}
The following implementations are made with \texttt{Freefem++} \nocite{Hecht2012} and are performed on a MacBook Pro, 2.6 GHz Intel Core i7, 8 GB 1600 MHz DDR3. 

For all the examples in this section we fix the basic domain $\Omega \coloneq [0,1] {\,\times\,} [0,1]$ and we choose the time step $\tau \coloneq 0.01$. Hence, with an initial time $t_0 \coloneq 0$ we set $t_i \coloneq i\tau$. By~$v_i$ and~$u_i$ we denote the phase and the displacement field at time~$t_i$, respectively. We present three simulations for different boundary data~$w$ and initial cracks~$\Gamma$. The initialization of the crack is performed by removing the set~$\Gamma$ from our domain~$\Omega$. Hence, the calculations take place on the set $\widetilde \Omega \coloneq \Omega \setminus \Gamma$. Moreover, in the last example we influence the crack propagation by putting an additional hole into the domain. 

The basic algorithm we want to implement is the one given at the beginning of the proof of Theorem \ref{thm1}. Given $i\in\mathbb{N}\setminus\{0\}$ and the phase-field $v_{i-1}$ at time $t_{i-1}$, we set $v_{i,0}\coloneq v_{i-1}$,and we define inductively on $j\in\mathbb{N}$
\begin{align}
	u_{i,j} &\coloneq \argmin\, \{\J_{h}(u,v_{i,j-1}) : u\in\A_{h}(w_{h}(t_{i}))\} \label{minualg}\,,\\
	v_{i,j} &\coloneq \argmin\, \{\J_{h}(u_{i,j},v) : v\in\F_{h} ,\, v\leq v_{i,j-1}\} \label{minvalg} \,.
\end{align}
Due to the convergence of both sequences as $j\to+\infty$, we stop the loop when the phase-field does not show any significant changes. More precisely, we fix a threshold $0< \TOL_v \ll 1$, and we perform the alternate minimization~\eqref{minualg}-\eqref{minvalg} until we find $\bar{\jmath}\in\mathbb{N}$ such that $\norm{v_{i,\bar{\jmath}}-v_{i,\bar{\jmath}-1}}_{\infty}\leq \TOL_v$ and set $(u_i,v_i) \coloneq (u_{i,\bar{\jmath}},v_{i,\bar{\jmath}})$.

One of the main difficulties in the scheme described above is the numerical treatment of the irreversibility condition $v\leq v_{i,j-1}$ in~\eqref{minvalg}. Unfortunately, there is no way to implement this inequality constraint in a direct way like standard boundary conditions. In many papers (see, for instance,~\cite{Artina2015,Bourdin2000,Bourdin2007a,Bourdin2007,Burke2010}) this difficulty has been overcome by replacing the irreversibility constraint with a Dirichlet boundary condition, forcing~$v$ to be zero where $v_{i,j-1}$ is below a certain threshold. The method turned out to be numerically very efficient; however, up to our knowledge there is a lack of a rigorous theoretical proof of convergence of the scheme in the framework of quasi-static evolution problems.

In order to be as close as possible to the theoretical result presented in this paper, in our numerical simulations we do not exploit the algorithm proposed in~\cite{Bourdin2000}, but we rather enforce the inequality constraint in a weak sense by adding to the original discretized functional~$J_{h}(u,v)$ in ~\eqref{discFunc} a term that penalizes phase-fields that are larger than~$v_{i,j-1}$. Hence, instead of minimizing~$\J_h (u,v)$ under the constraint $v \leq v_{i,j-1}$ in~\eqref{minvalg}, we target an unconstrained minimization of the following functional
\begin{equation*}
	\widetilde \J_h (u,v) := \J_h (u,v) + \frac{\zeta}{2} \int_\Omega \p_h\bigl( [v-v_{i,j-1}]_+^2 \bigr) \,\dd x \,.
\end{equation*}
Here, $\zeta \gg 1$ is a penalization constant and $[\cdot]_+$ denotes the positive part function. It is straightforward to see that $\widetilde \J_h (u,\cdot)$  approximates $\J_h(u,\cdot)$ in the sense of $\Gamma$\nobreakdash-converges for $\zeta \to \infty$, telling us that the minima of $\widetilde \J_h (u,\cdot)$ converge to minima of $\J_h (u, \cdot)$ under the condition $v \leq v_{i, j-1}$ as $\zeta \to \infty$.

The functional~$\widetilde \J_h(u,\cdot)$ is still strictly convex and coercive. Therefore, it has a unique minimizer. 
In order to find the minimum point of~$\widetilde \J_h(u,\cdot)$, it is very convenient to solve the Euler-Lagrange equation
\begin{displaymath}
0 = \int_{\Om}  v \psi \bigabs{\nabla  u}^{2} \,\di x + \int_{\Om} \nabla  v \cdot \nabla \psi \,\di x - \int_{\Om} \bigl( 1 -  v \bigr) \psi \,\di x + \zeta \int_\Omega [v-v_{i-1}]_+  \psi \,\dd x
\end{displaymath}
for every $\psi \in \F_h$.

Due to the penalization term, this equation is non-linear, and hence it is not possible to solve it directly using the stiffness matrix. Therefore, instead of using a direct solver, we use the Newton method: given~$v_{i,j-1}$, we set~$v^0_{i,j}\coloneq v_{i,j-1}$ and for each~$\ell \in \N$
\begin{align*}
v^\ell_{i,j} \coloneq v^{\ell-1}_{i,j} - \bigl( \nabla_v^2 \widetilde \J_h (u_{i,j}, v_{i,j}^{\ell-1}) \bigr)^{-1} \cdot \nabla_v \widetilde{\J}_h (u_{i,j}, v_{i,j}^{\ell-1}) \,.
\end{align*}
The iteration is stopped as soon as the change $\norm{v^{\bar\ell}_{i,j} - v^{\bar{\ell}-1}_{i,j}}_\infty$ and the value $\norm{\nabla_v \widetilde \J_h (u_{i,j}, v^{\bar \ell}_{i,j})}_\infty$ are sufficiently small for some $\bar\ell\in\N$ and define $v_{i,j} \coloneq v^{\bar\ell}_{i,j}$.

On most parts of the domain the phase-field will be nearly constant. Only close to the crack it is expected to be very steep. To get an appropriate interpolation error the mesh has to be very fine in the neighborhood  of the crack, while it can be coarse elsewhere. Thus, we use an adaptive triangulation refining the mesh where it is necessary. Such approaches has been particularly investigated in~\cite{Artina2015,Burke2010}. For our purposes, we regularly adapt the mesh in the iteration procedure using the standard routine implemented in \texttt{Freefem++}, which uses the standard anisotropic interpolation error estimator.

In what follows, we present three different examples.The basic numerical scheme we use is described in Algorithm \ref{pseudocode} and all the appearing numerical parameters are fixed to the values in Table \ref{numParam}.
As an output, at each time step we visualize the number of iterations in the alternating minimization scheme (corresponding to the middle while-loop in Algorithm \ref{pseudocode}) and the number of required iterations in the Newton method (corresponding to the inner while-loop). Moreover, we present the crack length with respect to time, which we approximate by $\frac{1}{\varepsilon} \int_{\Omega} (1-v_i) \,\dd x$.
\begin{algorithm}[htbp]
	\begin{algorithmic}
		\For{$i=1$ to $k$}
		\State $v_{i} \leftarrow v_{i-1}$
		\Do
			\State $\mathit{cnt} \leftarrow 0$
			\Do
				\State $\mathit{cnt} \leftarrow \mathit{cnt} + 1$
				\State $v_\text{old} \leftarrow v_i$
				\State $u_i \leftarrow \argmin \bigl\{\J_h (u, v_\text{old}) \mid u \in \A_h(w(t_i)) \bigr\}$
				\Do
					\State $\hat v_\text{old} \leftarrow v_i$
					\State $\displaystyle v_i \leftarrow \hat v_\text{old} - \bigl( \nabla_v^2 \widetilde \J_h (u_i, \hat v_\text{old} ) \bigr)^{-1} \cdot \nabla_v \widetilde{\J}_h (u_i, \hat v_\text{old}) \,.$
				\doWhile{$\norm{v_i - \hat v_{\text{old}}}_\infty > \TOL_v$ AND $\norm{\nabla_v \widetilde \J_h (u_i, v_i) }_\infty >\TOL_v$}
			\doWhile{$\norm{v_i -  v_\text{old}}_\infty > \TOL_v$ AND $\mathit{cnt} < 10$}
			\State Perform the mesh adaption
			\State $\mathit{rel}_\text{mesh} \leftarrow$ ''relative change of nodes''
		\doWhile{$\mathit{rel}_\text{mesh} > \TOL_\text{mesh}$}
		\EndFor
	\end{algorithmic}
	\caption{}\label{pseudocode}
\end{algorithm}

\begin{table}[htbp]
	\centering
	\caption{Numerical Parameters}\label{numParam}
	\begin{tabular}{cccccc}
		\hline
		$\varepsilon$ & $\eta_\varepsilon$ & $\kappa$ &  $\zeta$ & $\TOL_v$ & $\TOL_\text{mesh}$ \\
		\hline 
		$2 \cdot 10^{-2}$ & $10^{-5}$ & $0.5$ & $10^6$ & $2\cdot10^{-3}$ & $10^{-2}$ \\
		\hline
	\end{tabular}
\end{table}

\subsection*{First Example}
Our first example starts from a fully symmetrical setting, where we impose an initial crack orthogonal to and in the middle of the left boundary of the domain $\Omega$ described by the set
\begin{equation*}
	\Gamma_1 := [0,0.125]\times \bigl( 0.5(1-10^{-3}), 0.5(1 + 10^{-3}) \bigr) \,.
\end{equation*}
We also use a symmetric boundary condition
\begin{equation*} 
	w_1(t) = \begin{cases}
		t & \text{on } \{0\} \times [0,0.5(1-10^{-3})] \,, \\
		-t & \text{on } \{0\} \times [0.5(1+10^{-3}),1] \,.
	\end{cases}
\end{equation*}
The different measured values are visualized in Figure \ref{firstExample}. The crack propagates until the domain is fully broken at time $0.85$. In Figure \ref{firstExamplePhaseField} one can see the corresponding phase-field  at different time steps.
\begin{figure}[htbp]
	\centering
	\footnotesize
	\input{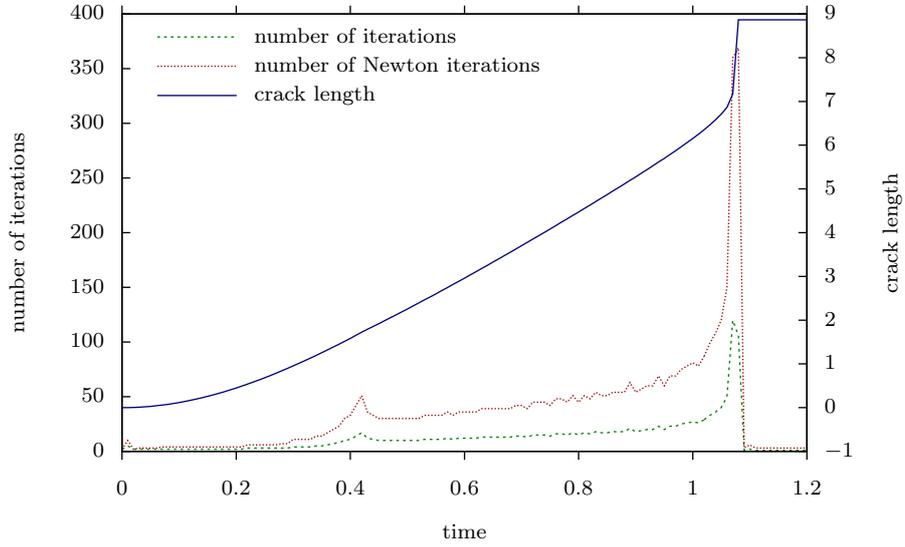}
	\normalsize
	\caption{\label{firstExample}Numerical output data of the first Example with initial crack $\Gamma_1$ and boundary condition $w_1$. The total calculation time until $t=1.08$ is approx. 2.0 hours.}
\end{figure}
\begin{figure}[htbp]
	\centering
	\begin{subfigure}{0.3\textwidth}
		\includegraphics[width=\textwidth]{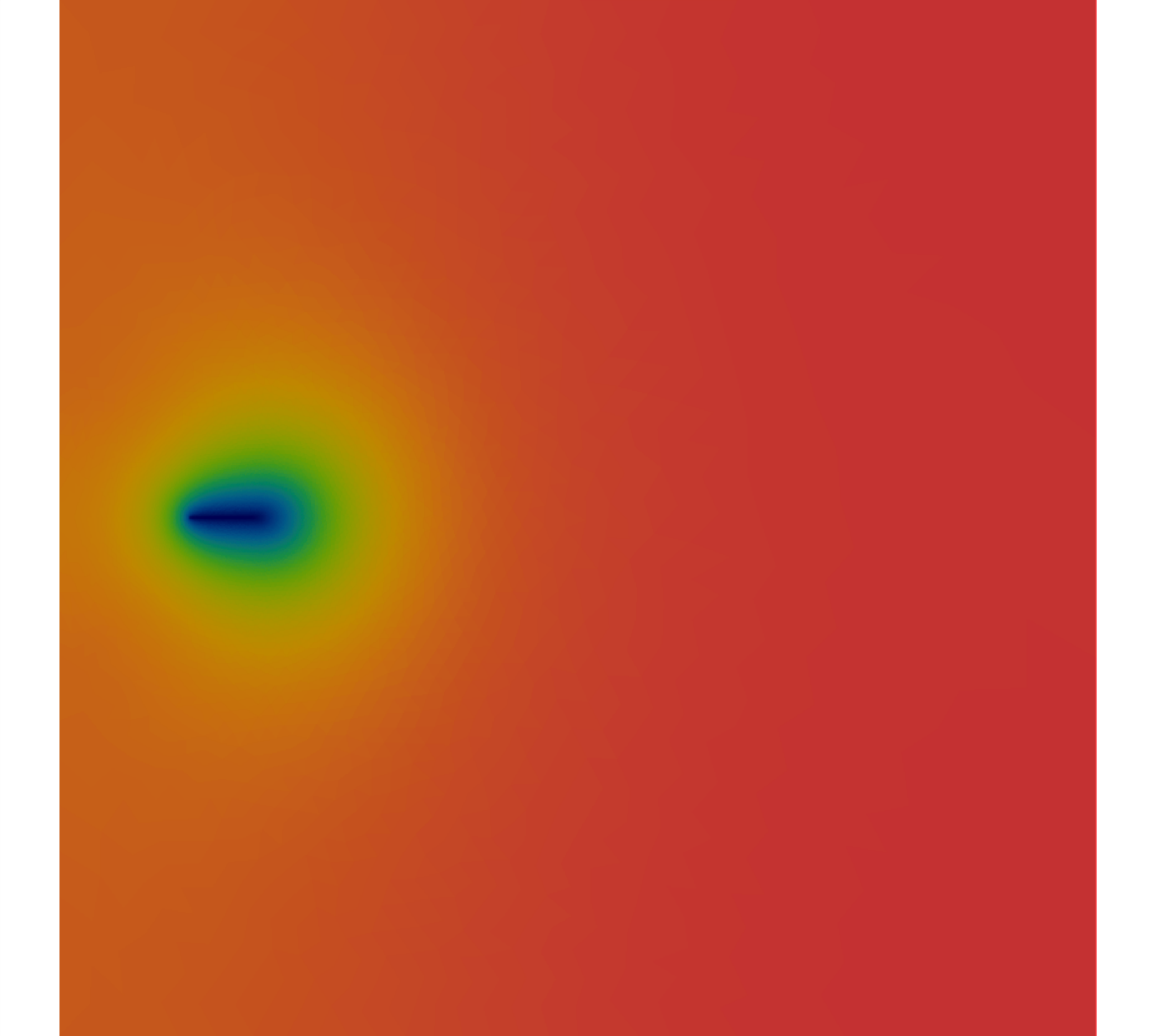}
		\caption{$t=0.6$}
	\end{subfigure}
	\begin{subfigure}{0.3\textwidth}
		\includegraphics[width=\textwidth]{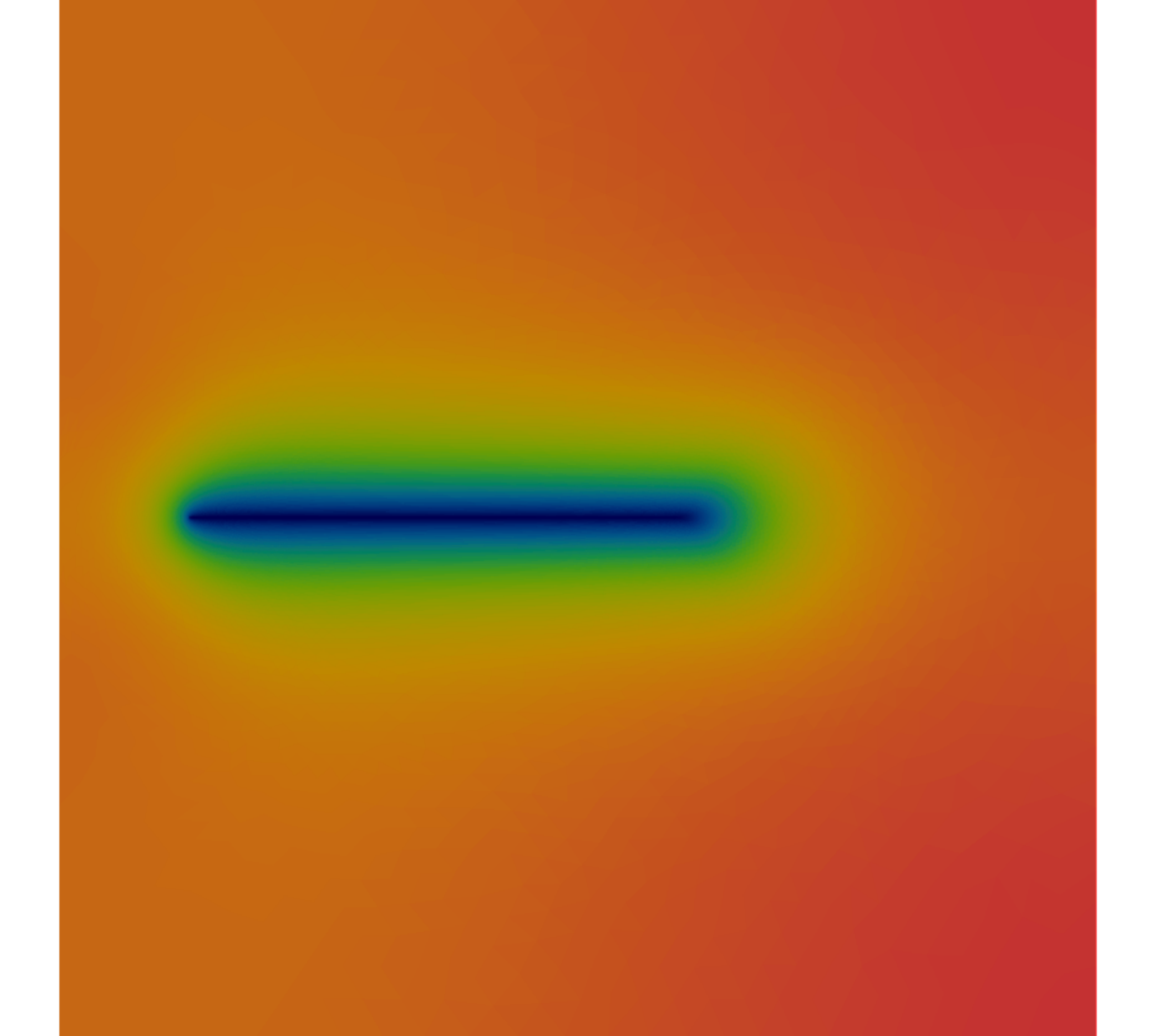}
		\caption{$t=1.07$}
	\end{subfigure}
	\begin{subfigure}{0.3\textwidth}
		\includegraphics[width=\textwidth]{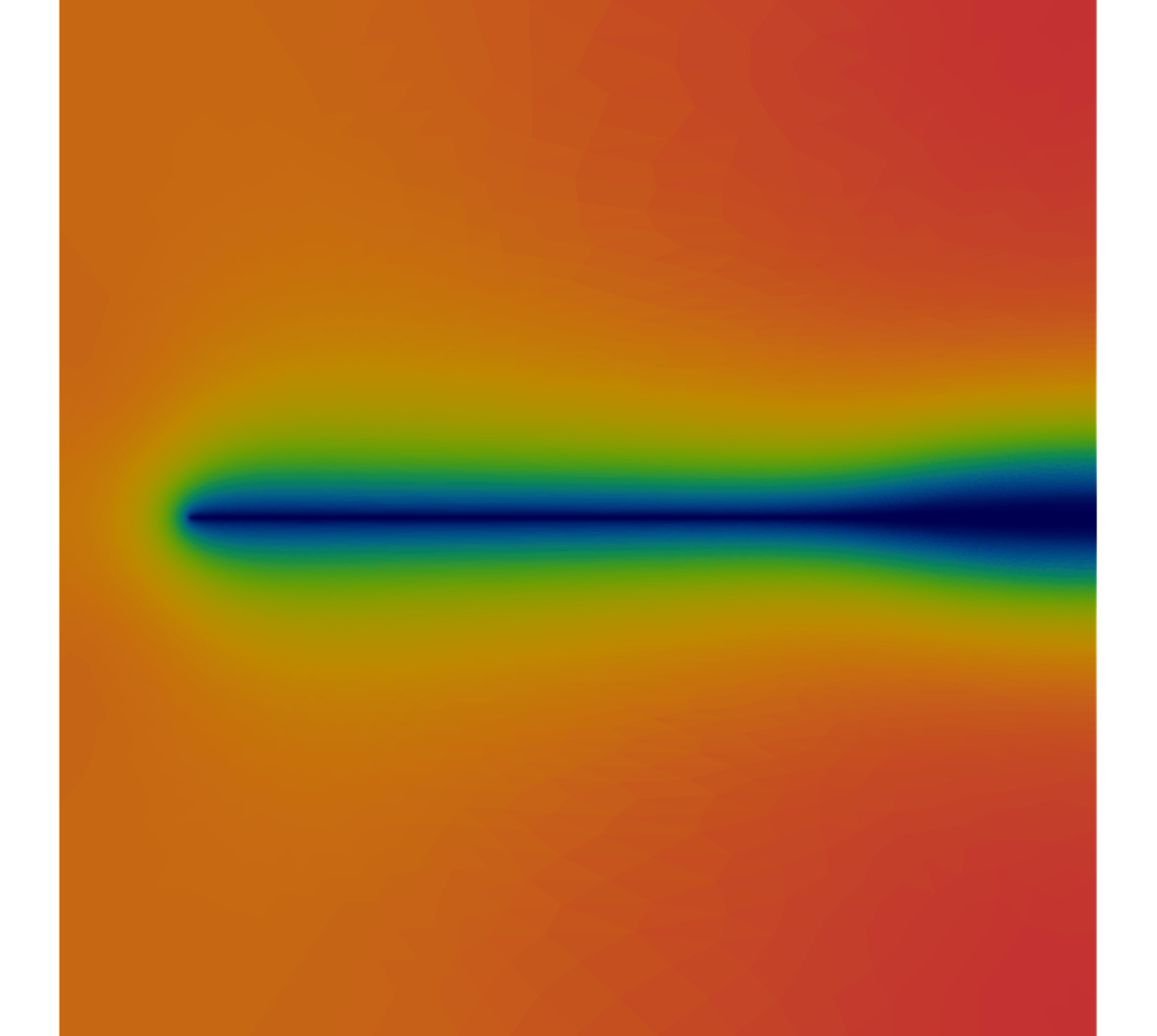}
		\caption{$t=1.08$}
	\end{subfigure}
	\caption{\label{firstExamplePhaseField}Phase field of first example at various times.}
\end{figure}

\subsection*{Second Example}
In our second example we break the symmetries by turning the initial crack into a set not orthogonal to the left boundary and shifted out of the middle, namely,
\begin{equation*}
	\Gamma_2 := \biggl\{ (x,y) \in \R^2 \mid x\in \biggl[0,\frac{1}{8}\biggr],   \frac{1}{2}x + \frac{2}{5} - \frac{10^{-3}}{2}< y <  \frac{1}{2}x + \frac{2}{5} + \frac{10^{-3}}{2} \biggr\} \,.
\end{equation*}
The boundary condition stays the same in the sense that it is $t$ below the slit and~$-t$ above it. Thus, we have
\begin{equation*} 
	w_2(t) = \begin{cases}
	t & \text{on } \{0\} \times [0,0.4-0.5\cdot 10^{-3}] \,,\\
	-t & \text{on } \{0\} \times [0.4+0.5 \cdot 10^{-3},1] \,.
	\end{cases}
\end{equation*}
With this settings, the specimen completely breaks after the 72nd time step, where the crack crosses the bottom border as it can be seen in Figure \ref{secondExamplePhaseField}. Figure \ref{secondExample} show the corresponding numerical measurements.
\begin{figure}[bhtp]
	\centering
	\footnotesize
	\input{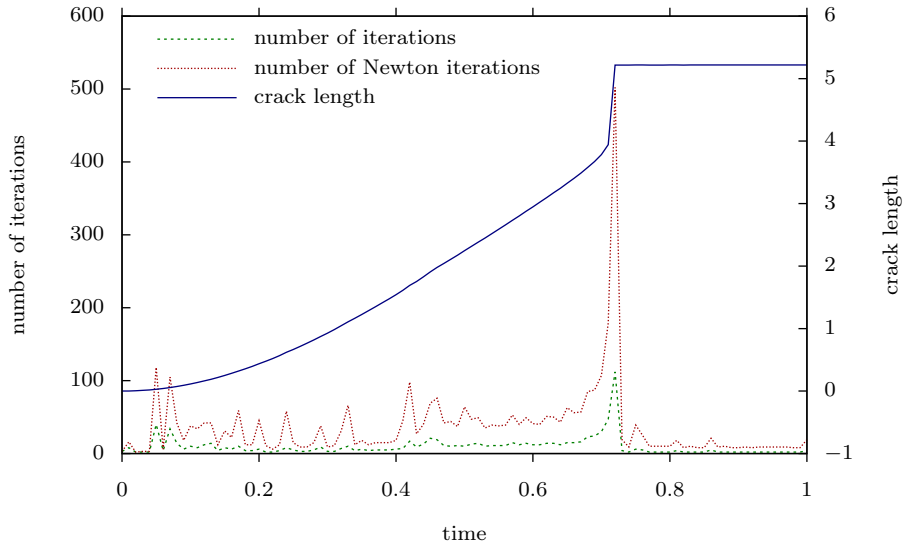}
	\normalsize
	\caption{\label{secondExample}Numerical output data of the second Example with initial crack $\Gamma_2$ and boundary condition $w_2$. The total calculation time until $t=0.72$ is approx. 1.7 hours.}
\end{figure}
\begin{figure}[htbp]
	\centering
	\begin{subfigure}{0.3\textwidth}
		\includegraphics[width=\textwidth]{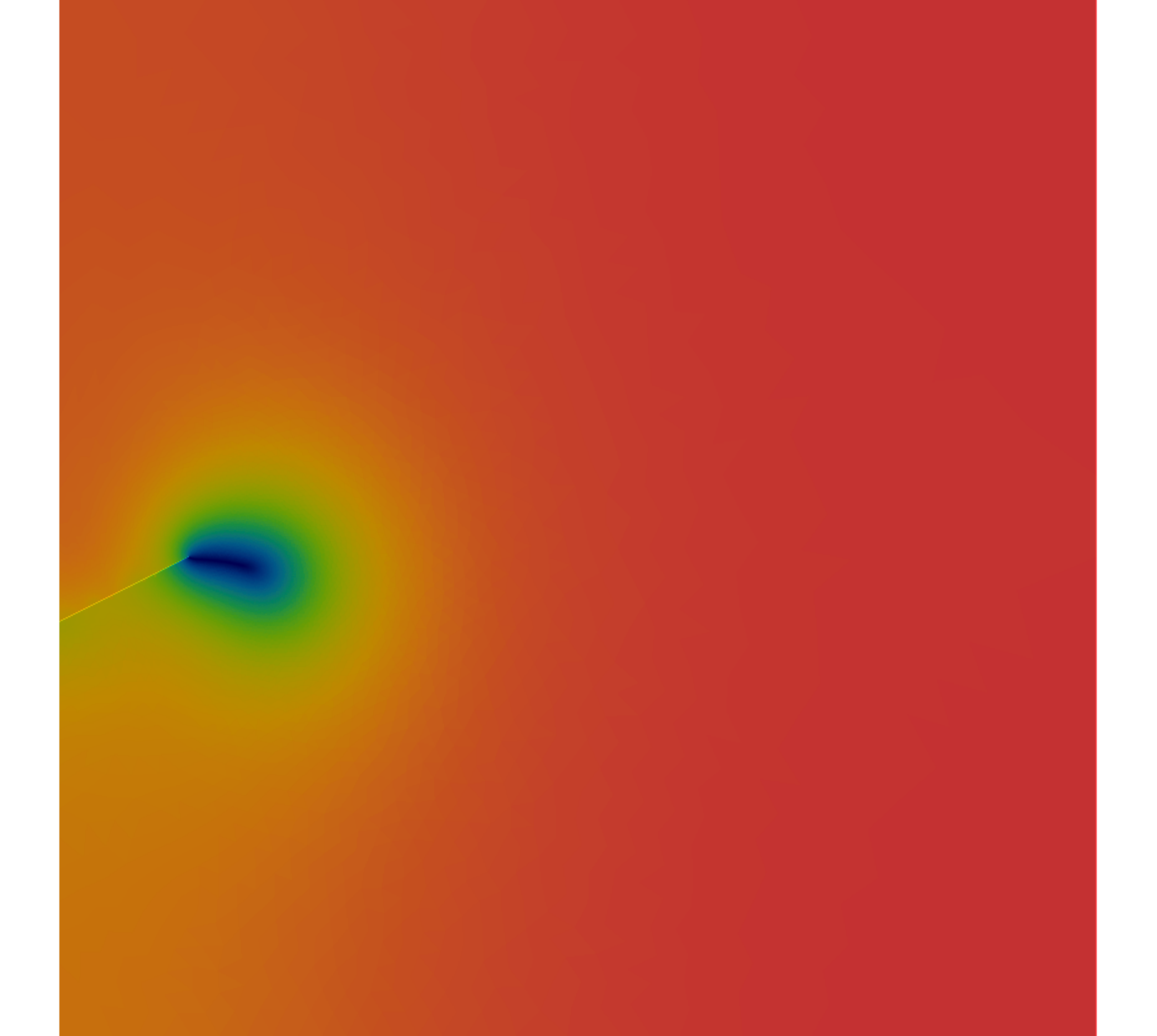}
		\caption{$t=0.6$}
	\end{subfigure}
	\begin{subfigure}{0.3\textwidth}
		\includegraphics[width=\textwidth]{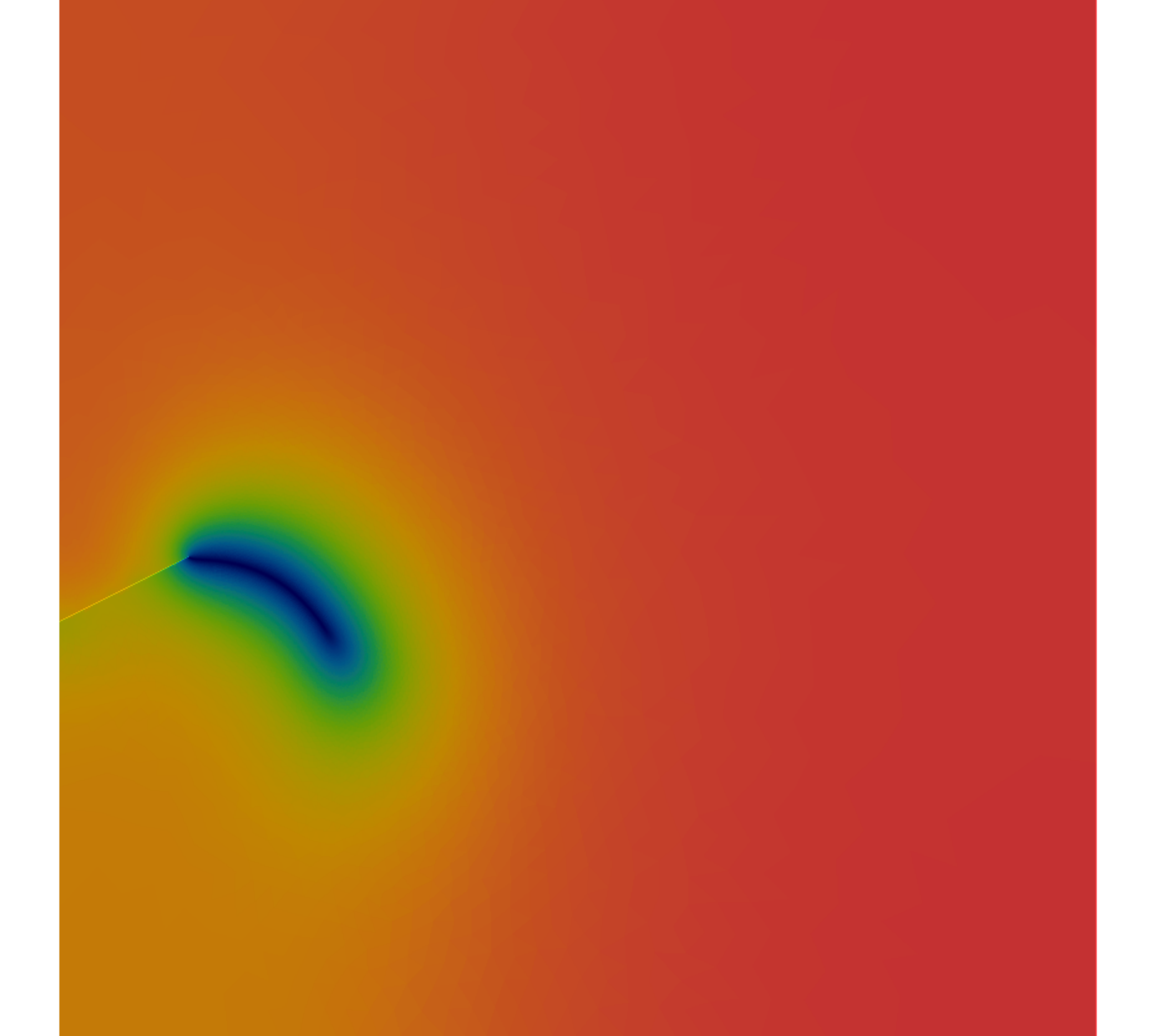}
		\caption{$t=0.71$}
	\end{subfigure}
	\begin{subfigure}{0.3\textwidth}
		\includegraphics[width=\textwidth]{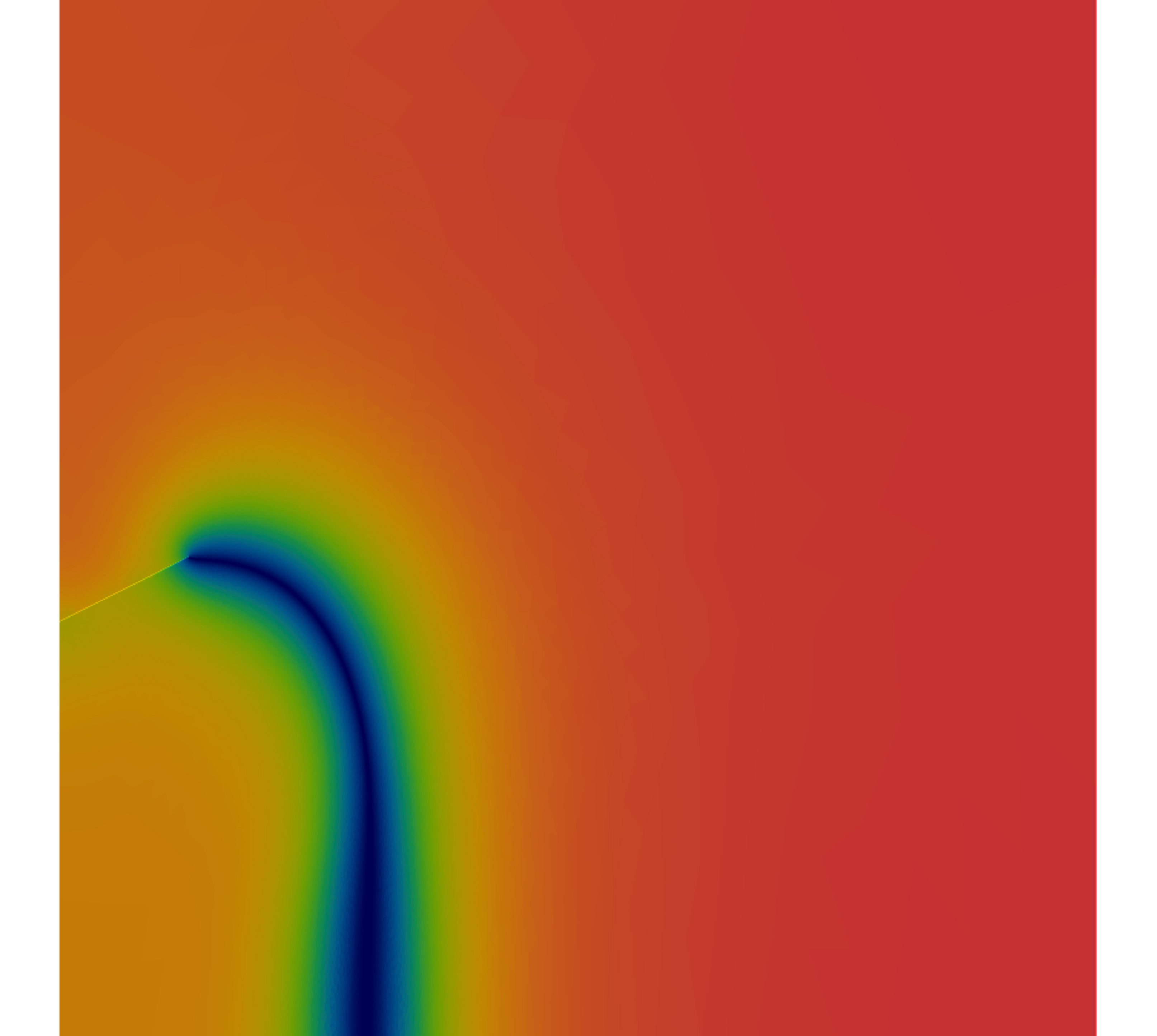}
		\caption{$t=0.72$}
	\end{subfigure}
	\caption{\label{secondExamplePhaseField}Phase field of second example at various times.}
\end{figure}

\subsection*{Third Example}
For the last example we again use $\Gamma_1$ as a pre-crack and $w_1$ has the driving boundary condition. Unlike the first example, we cut out a whole of the domain with center $(0.7,0.3)$ and radius $0.1$. This setting makes the crack deviate from the middle line into the hole, as it can be seen in Figure \ref{thirdExamplePhaseField}. The last part of the crack from the border of the hole to the border of the domain appears instantaneously at time $1.11$.
\begin{figure}[htbp]
	\centering
	\footnotesize
	\input{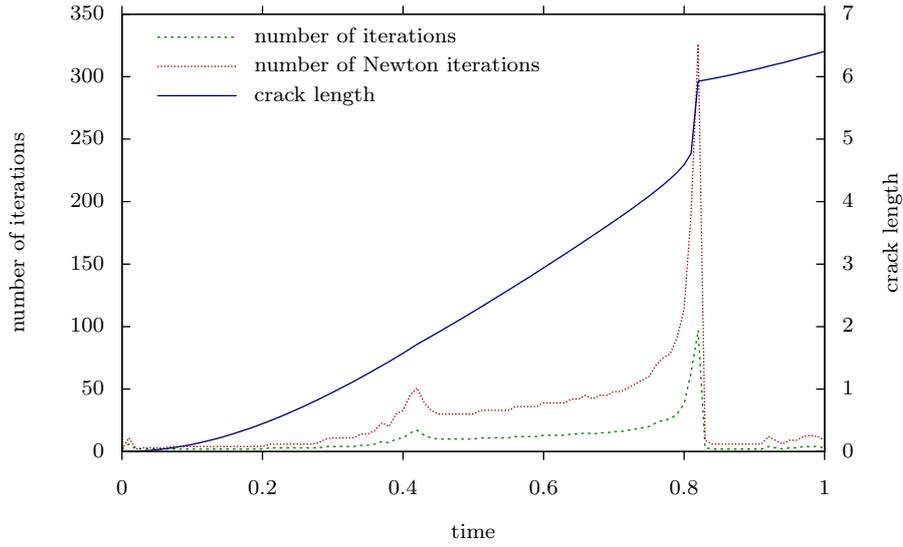}
	\normalsize
	\caption{\label{thirdExample}Numerical output data of the third Example with initial crack $\Gamma_1$ and boundary condition $w_1$ and the hole $B_{(0.7,0.3)}(0.1)$ cut out of the domain. The total calculation time until $t=1.11$ is approx. 1.0 hours.}
\end{figure}
\begin{figure}[htbp]
	\centering
	\begin{subfigure}{0.29\textwidth}
		\includegraphics[width=\textwidth]{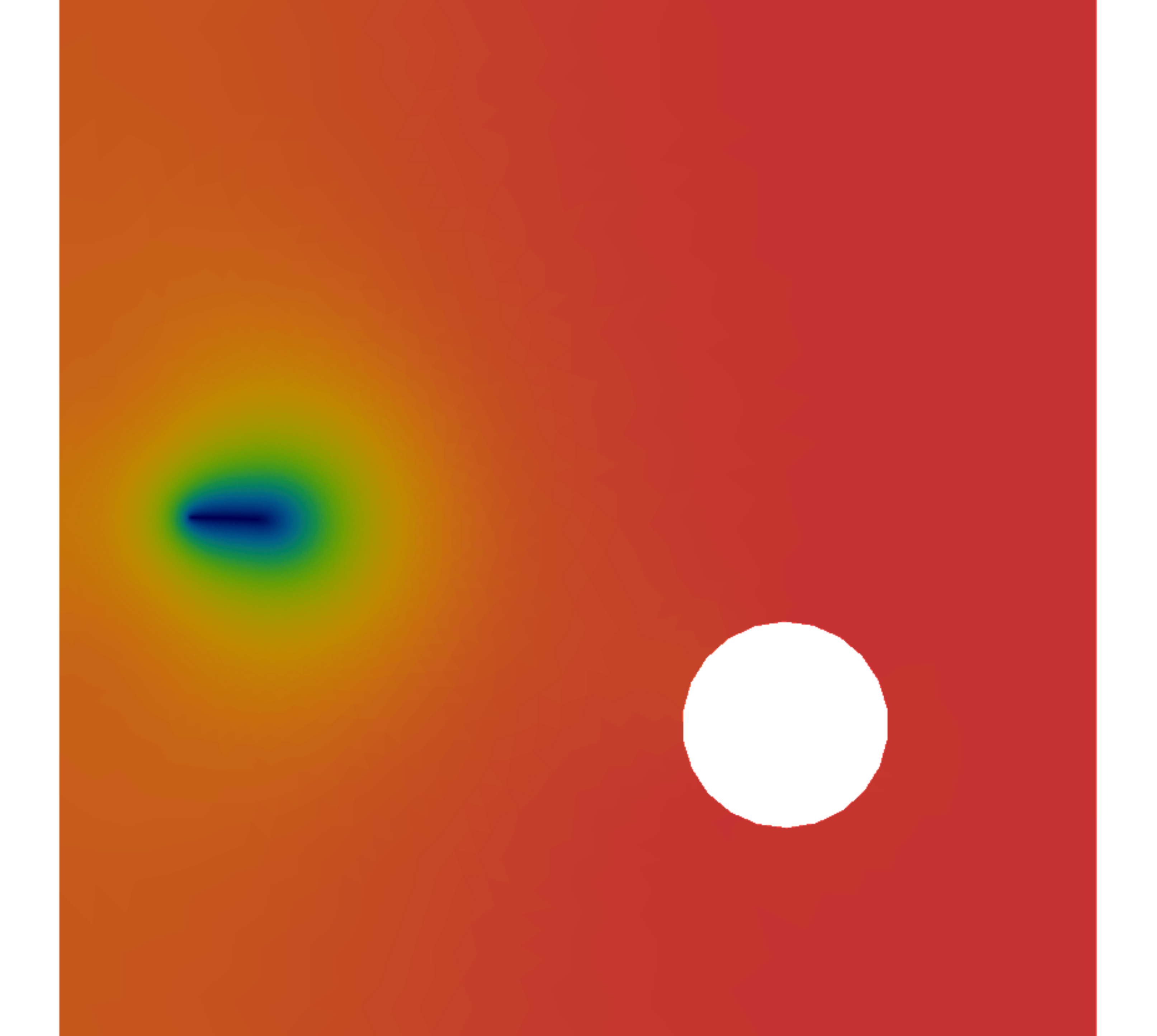}
		\caption{$t=0.6$}
	\end{subfigure}
	\begin{subfigure}{0.29\textwidth}
		\includegraphics[width=\textwidth]{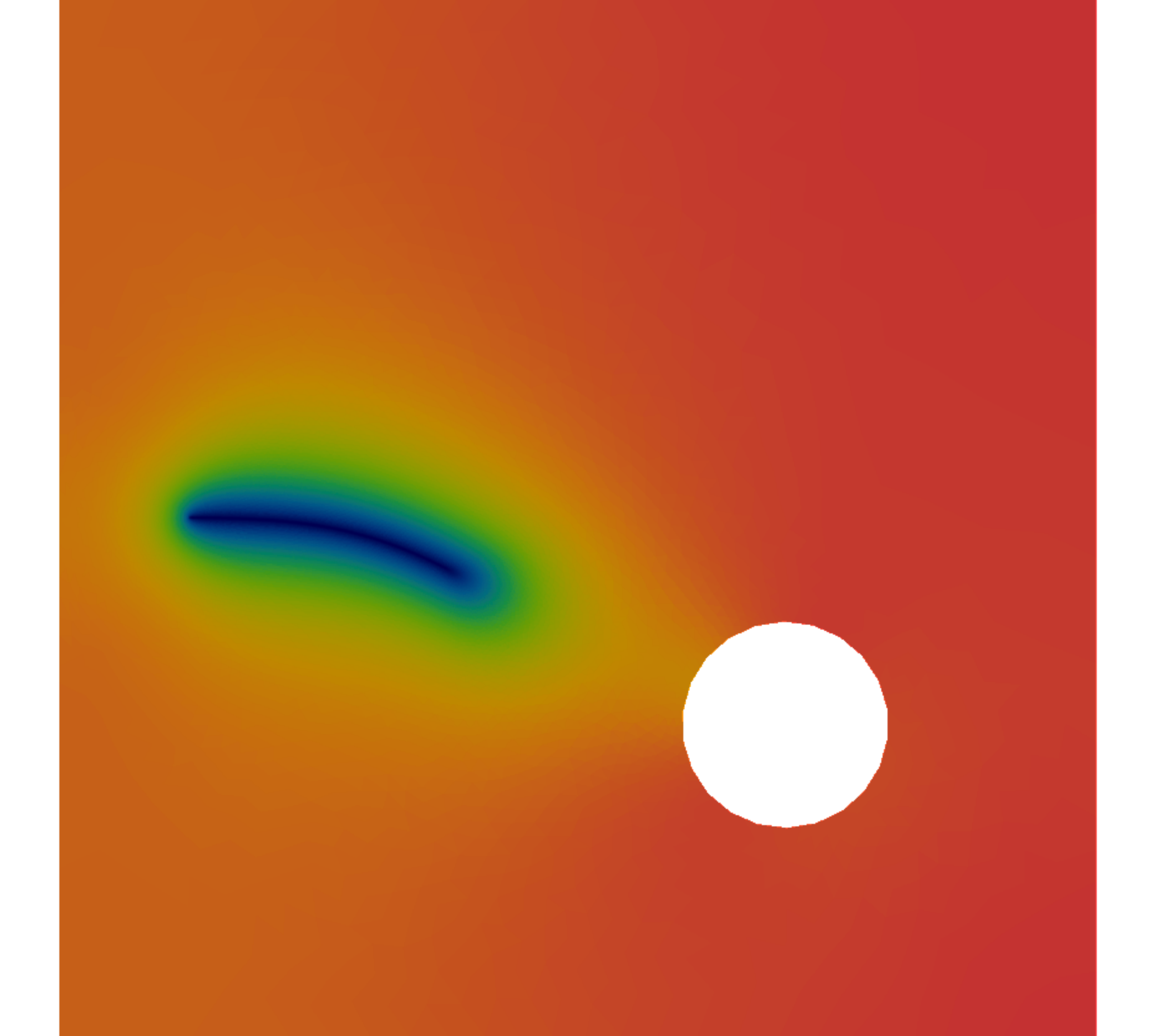}
		\caption{$t=0.81$}
	\end{subfigure}\\
	\begin{subfigure}{0.29\textwidth}
		\includegraphics[width=\textwidth]{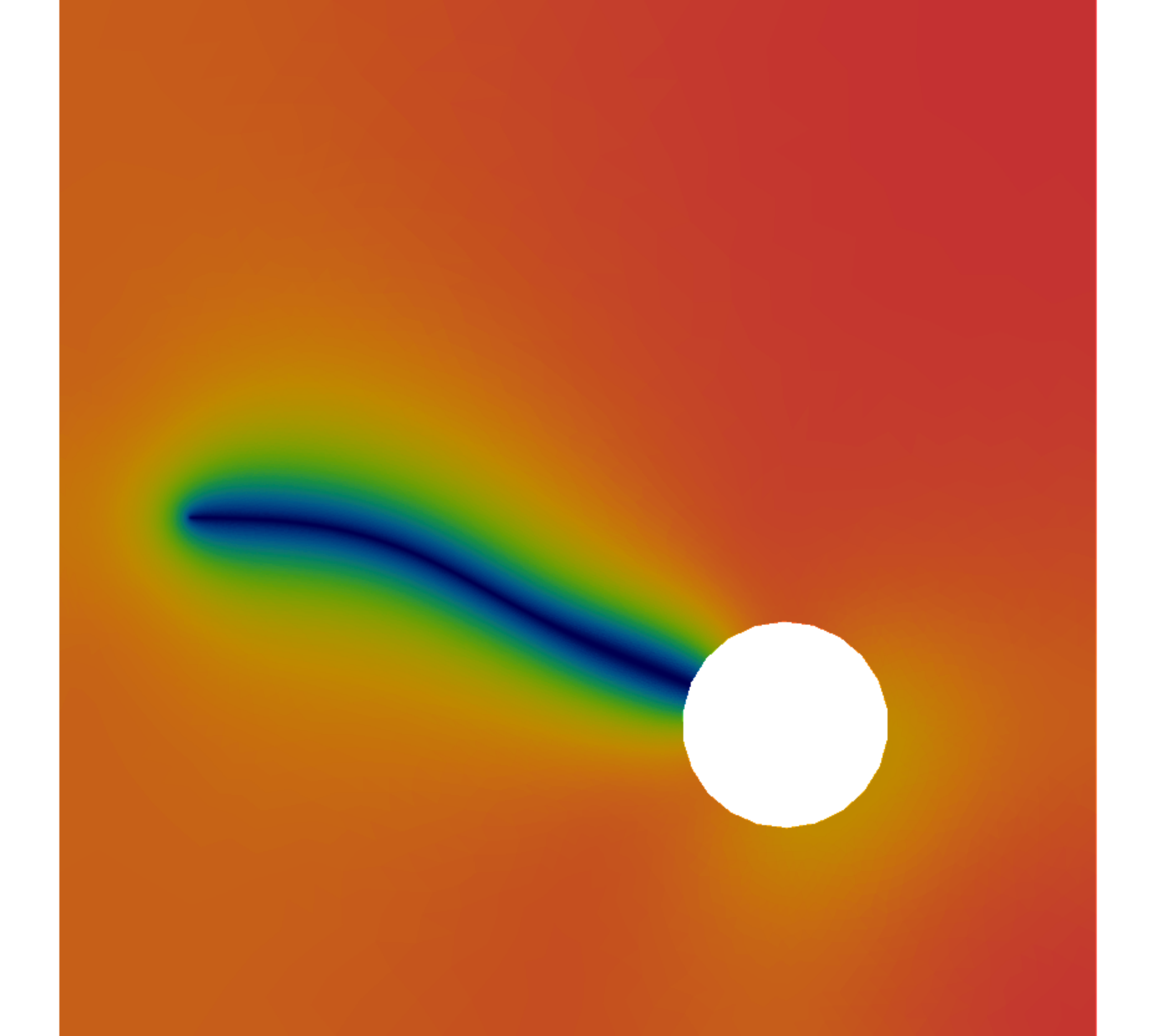}
		\caption{$t=0.82$}
	\end{subfigure}
	\begin{subfigure}{0.29\textwidth}
		\includegraphics[width=\textwidth]{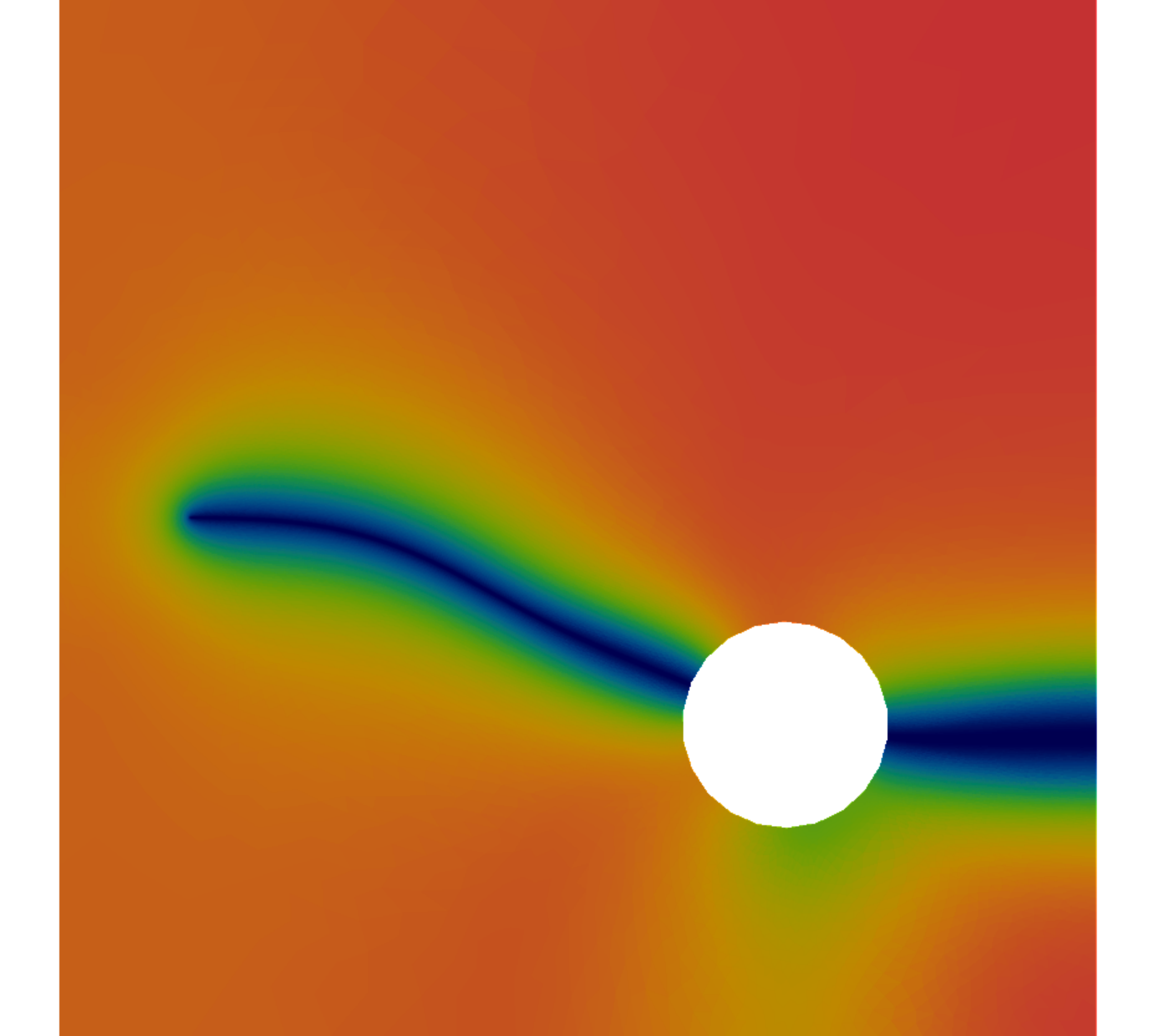}
		\caption{$t=1.11$}
	\end{subfigure}
	\caption{\label{thirdExamplePhaseField}Phase field of third example at various times.}
\end{figure}

\appendix
\section{}
~
\begin{proof}[Proof of Proposition \ref{Prop1}]\label{ProofProp1}
	It is enough to prove the statement for $j=1$. In order to show the existence of a minimizer for~\eqref{minv} we want to apply the direct method of the calculus of variations. Since $\J_{h}(u_{1},\cdot)$ is continuous with respect to the convergence in~$\F_{h}$, we only need to show that a minimizing sequence~$z_{k}\in\F_{h}$ for~\eqref{minv} admits a limit, at least up to a subsequence. Since
	\begin{equation*}
		\sup_{k\in\mathbb{N}} \,\norm{\nabla z_{k}}_{2}^{2} \leq \sup_{k\in\mathbb{N}} \, 2\J_{h}(u_{1},z_{k}) < +\infty \,,
	\end{equation*}
	by construction of the function space~$\F_{h}$~\eqref{discretespace} and of the triangulation~$\T_{h}$, we easily deduce that
	\begin{equation}\label{linftybound}
		\sup_{k} \,\norm{\nabla z_{k}}_{\infty} < + \infty \,,
	\end{equation}
	so that the functions~$z_{k}$ are uniformly Lipschitz in~$\Om$. Moreover, since by hypothesis~$v_{0}\geq0$, it is not restrictive to assume that there exists at least one vertex~$x_{l}$, $l\in\{1,\ldots,N_{h}\}$, such that $z_{k}(x_{l})\geq0$. Indeed, if~$z_{k}<0$ in $\Om$, it is readily seen that $\J_{h}(u_{1},0)\leq\J_{h}(u_{1},z_{k})$. Therefore, thanks to~\eqref{linftybound} and to the inequality $z_{k}\leq v_{0}$ in~$\Om$, we also deduce that the sequence~$z_{k}$ is uniformly bounded in~$W^{1,\infty}(\Om)$. Thus,~$z_{k}$ converges, up to a subsequence, to some~$v_{1}$ in~$\F_{h}$, and this concludes the proof of existence. The uniqueness of solution follows by the strict convexity of the functional~$\J_{h}(u_{1},\cdot)$.
	
	We now prove the second part of the statement, i.e., that $0\leq v_{1}\leq 1$. For the sake of contradiction, let us first assume that~$v_{1}\ngeq0$. Using the notation described in Section~\ref{Discrete}, let~$x_{l}\in\Delta_{h}$ be such that $v_{1}(x_{l})\leq v_{1}(x_{m})$ for every $m=1,\ldots,N_{h}$. In particular, we have~$v_{1}(x_{l})<0$. Let $\xi_{l}\geq0$ be the $l$-th element of the basis of~$\F_{h}$ defined by~\eqref{basis}. Being $v_{0}\geq0$ in $\Om$ and~$v_{1}(x_{l})<0$, for every~$\varepsilon\in\R$ with~$|\varepsilon|$ small enough we have that $v_{1}+\varepsilon\xi_{l}\leq v_{0}$ in~$\Om$, and, by the minimality of~$v_{1}$, $\J_{h}(u_{1},v_{1})\leq \J_{h}(u_{1},v_{1}+\varepsilon\xi_{l})$. By the quadratic structure of~$\J_{h}(u_{1},\cdot)$, from the previous inequality and the arbitrariness of $\varepsilon$ we deduce that
	\begin{equation}\label{discretecritical2}
		\int_{\Om} \p_{h} (v_{1}\xi_{l}) \abs{\nabla u_{1}}^{2} \,\di x + \int_{\Om} \nabla v_{1} \cdot \nabla \xi_{l} \di x - \int_{\Om} \p_{h} \bigl( (1-v_{1}) \xi_{l}\bigr) \,\di x= 0 \,.
	\end{equation}
	
	Since~$v_{1}(x_{l})<0$ and~\eqref{basis} holds, we have that
	\begin{equation}\label{contr0}
		\p_{h} (v_{1}\xi_{l}) \leq 0 \quad \text{and} \quad \p_{h} \bigl( (1 - v_{1}) \xi_{l} \bigr) \geq 0 \qquad \text{in~$\Om$}.
	\end{equation}
	Hence, from~\eqref{discretecritical2} and~\eqref{contr0} we get 
	\begin{equation}\label{contr1}
		\int_{\Om} \nabla v_{1} \cdot \nabla \xi_{l} \,\di x = -\int_{\Om} \p_{h} (v_{1}\xi_{l}) \abs{\nabla u_{1}}^{2} \,\di x + \int_{\Om}\p_{h} \bigl( (1-v_{1}) \xi_{l} \bigr) \,\di x \geq 0 \,.
	\end{equation}
	
	On the other hand, we can write~$v_{1}$ as a linear combination of the elements of the basis~$\{\xi_{m}\}_{m=1}^{N_{h}}$ of~$\F_{h}$, namely, $v_{1}=\sum_{m=1}^{N_{h}}v_{1}(x_{m})\xi_{m}$. Then, by direct computation,
	\begin{align}
		&\int_{\Om}  \nabla v_{1} \cdot \nabla \xi_{l} \,\di x = \sum_{m=1}^{N_{h}} v_{1}(x_{m}) \int_{\Om} \nabla \xi_{m} \cdot \nabla \xi_{l} \,\di x \nonumber \\
		={}& v_{1}(x_{l}) \sum_{m=1}^{N_{h}} \int_{\Om} \nabla \xi_{m} \cdot \nabla \xi_{l} \,\di x + \sum_{m=1}^{N_{h}} \bigl( v_{1}(x_{m}) - v_{1}(x_{l}) \bigr) \int_{\Om} \nabla \xi_{m} \cdot \nabla \xi_{l} \,\di x \nonumber \\
		={}& \sum_{m=1}^{N_{h}} \bigl( v_{1}(x_{m}) - v_{1}(x_{l}) \bigr) \int_{\Om} \nabla \xi_{m}  \cdot\ \nabla \xi_{l} \,\di x\leq 0\,, \label{contr2}
	\end{align}
	where, in the last equality, we have used~\eqref{stiffness}, the particular choice of the vertex~$x_{l}$, and the fact that
	\begin{equation*}
		\sum_{m=1}^{N_{h}}\int_{\Om}\nabla{\xi_{m}}{\,\cdot\,}\nabla{\xi_{l}}\,\di x=0\,.
	\end{equation*}
	Therefore, combining~\eqref{contr1} and~\eqref{contr2} we get a contradiction, and thus~$v_{1}\geq0$.
	
	In order to show that $v_{1}\leq 1$, we can argue again by contradiction, assuming that there exists a vertex~$x_{l}$, $l\in\{1,\ldots,N_{h}\}$ such that $v_{1}(x_{l})\geq v_{1}(x_{m})$ for every $m=1,\ldots,N_{h}$ and $v_{1}(x_{l})>1$. As before, being $\xi_l$ the $l$\nobreakdash-th element of the basis of $\F_h$, for $\varepsilon \in \R$ with $\abs{\varepsilon}$ small enough there holds $v_1 +\varepsilon \xi_l \geq v_0$, such that we can repeat~\eqref{discretecritical2}\nobreakdash--\eqref{contr2} with opposite inequality signs, concluding that $0\leq v_{1}\leq 1$ in~$\Om$. This concludes the proof of the proposition.
\end{proof}

\begin{proof}[Proof of Lemma \ref{lemma0}]\label{ProofLemma0}
	Let~$M>0$,~$g\in C^{2}(\R)$, the mesh parameter
	$h\in \N\setminus\{0\}$, and the triangulation~$\T_{h}$ be fixed, and let us consider a function~$v\in\F_{h}$ such that~$\|v\|_{\infty}\leq M$.
	
	Given an element~$K$ of the triangulation~$\T_{h}$, we first prove~\eqref{discreteestimate} on~$K$. Without loss of generality, we may assume that the origin is a vertex of~$K$ and
	\begin{equation*}
		K =\bigl\{ x = (x_{1},x_{2}) \in \Om : x_{1} \in [0,h^{-1}] ,\, x_{2} \in [0,x_{1}] \bigr\}\,.
	\end{equation*}
	The general case follows by an affine transformation.
	
	Recalling that if a function~$v\in\F_{h}$, then it is affine on every element of~$\T_{h}$, for every $x\in K$ we have
	\begin{equation*}
		v(x) = v(0) + \nabla v\rvert_K \cdot x \,.
	\end{equation*}
	Hence, by Taylor expansion, there exists some $\xi \in K$ such that
	\begin{equation}\label{a1}
		g ( v(x) )  = g( v(0) ) + g' ( v(0) ) \bigl( v(x) - v(0) \bigr) + \frac{1}{2} g''( v(\xi) ) \bigl( v(x) - v(0)\bigr)^2\,.
		\end{equation}
In particular, the last term in~\eqref{a1} can be simply estimated by
	\begin{equation}\label{a2}
		|g''( v(\xi) )|\bigl( v(x) - v(0)\bigr)^2\leq |g''( v(\xi) )| \bigabs{\nabla v \rvert_K }^2 \abs{x}^2 \,.
	\end{equation}
	 Recalling the definition of~$\p_{h}$~\eqref{ph} and using the expansion~\eqref{a1} we can write
	\begin{align}\label{a3}
		\p_{h}( g \circ v ) (x)  ={}& g( v(0) ) + h\bigl(g( v(h^{-1},0) ) - g( v(0) )\bigr) x_{1} \nonumber \\
		& + h\bigl(g( v(0,h^{-1}) ) - g( v(0) )\bigr) x_{2} \nonumber\\
		={}& g( v(0)) + h g'( v(0) ) \biggl((v(h^{-1},0) - v(0)) x_{1} + (v(0,h^{-1}) - v(0)) x_{2} \biggr)  \nonumber\\
		&+ \frac{h}{2}g''(v(\xi_{1})) \bigabs{v(h^{-1},0) - v(0)}^2 x_{1} \nonumber\\
		& +\frac{h}{2}g''(v(\xi_{2})) \bigabs{v(0,h^{-1}) - v(0)}^2  x_{2} 
	\end{align}
	for some~$\xi_{1},\xi_{2}\in K$. Hence, in view of~\eqref{a1}-\eqref{a3}, for $x\in K$ there holds
	\begin{align*}
		\bigabs{ g \bigl( v (x) \bigr) - \p_{h} ( g \circ v  ) (x)} \leq C h^{-2} \bigabs{\nabla v \rvert_K }^2
	\end{align*}
	for some positive constant~$C=C(g,M)$ depending only on~$g\in C^{2}(\R)$ and on~$M$. Integrating the previous inequality, we end up with~\eqref{discreteestimate}, and the proof is concluded. 
\end{proof}

\section*{Acknowledgements}
 This material is based on work supported by the European Research Council under Grant No. 501086
``High-Dimensional Sparse Optimal Control''. S.A. is member of the Gruppo Nazionale per l'Analisi Matematica, la Probabilit\`a e le loro Applicazioni (GNAMPA) of the Istituto Nazionale di Alta Matematica (INdAM). S.B. acknowledges the support by the DFG through the International Research Training Group IGDK 1754 ``Optimization and Numerical Analysis for Partial Differential Equations with Nonsmooth Structures''.

\providecommand{\bysame}{\leavevmode\hbox to3em{\hrulefill}\thinspace}
\providecommand{\MR}{\relax\ifhmode\unskip\space\fi MR }
\providecommand{\MRhref}[2]{%
  \href{http://www.ams.org/mathscinet-getitem?mr=#1}{#2}
}
\providecommand{\href}[2]{#2}


\begin{thebibliography}{10}

\bibitem{Ambrosio1990a}
L.~Ambrosio and V.~M. Tortorelli, \emph{Approximation of functionals depending
  on jumps by elliptic functionals via ${\Gamma}$-convergence}, Commun. Pure
  Appl. Math. \textbf{43} (1990), no.~8, 999--1036.

\bibitem{Ambrosio2000}
Luigi Ambrosio, Nicola Fusco, and Diego Pallara, \emph{Functions of bounded
  variation and free discontinuity problems}, Oxford Mathematical Monographs,
  Clarendon Press, Oxford, 2000.

\bibitem{Ambrosio1992}
Luigi Ambrosio and Vincenzo~Maria Tortorelli, \emph{On the approximation of
  free discontinuity problems}, Boll. Un. Mat. Ital. B \textbf{7} (1992),
  no.~6, 105--123.

\bibitem{Artina2016}
Marco Artina, Filippo Cagnetti, Massimo Fornasier, and Francesco Solombrino,
  \emph{Linearly constrained evolutions of critical points and an application
  to cohesive fractures}, Math. Models Methods Appl. Sci. \textbf{27} (2017),
  no.~2, 231--290.

\bibitem{Artina2015}
Marco Artina, Massimo Fornasier, Sefano Micheletti, and Simona Perotto,
  \emph{Anisotropic mesh adaption for crack detection in brittle materials},
  SIAM J. Sci. Comput. \textbf{37} (2015), no.~4, B633--B659.

\bibitem{Artina2015b}
Marco Artina, Massimo Fornasier, Stefano Micheletti, and Simona Perotto,
  \emph{The benefits of anisotropic mesh adaptation for brittle fractures under
  plane-strain conditions}, New Challenges in Grid Generation and Adaptivity
  for Scientific Computing (Simona Perotto and Luca Formaggia, eds.), SEMA
  SIMAI Springer Series, vol.~5, Springer, Cham, 2015, pp.~43--67.

\bibitem{Bourdin2000}
J-J.~Marigo B.~Bourdin, G.A.~Francfort, \emph{Numerical experiments in
  revisited brittle fracture}, J. Mech. Phys. Solids \textbf{48} (2000), no.~4,
  797--826.

\bibitem{Bourdin2007}
B.~Bourdin, \emph{The variational formulation of brittle fracture: numerical
  implementation and extensions}, IUTAM Symposium on Discretization Methods for
  Evolving Discontinuities (Alain Combescure and René De~Borst, eds.), IUTAM
  Bookseries, vol.~5, Springer, Dordrecht, 2007, p.~381.

\bibitem{Bourdin2007a}
Blaise Bourdin, \emph{Numerical implementation of the variational formulation
  for quasi-static brittle fracture}, Interfaces and Free Boundaries \textbf{9}
  (2007), no.~3, 411--430.

\bibitem{Bourdin2008}
Blaise Bourdin, Gilles~A. Francfort, and Jean-Jacques Marigo, \emph{The
  variational approach to fracture}, J. Elast. \textbf{91} (2008), no.~1--3,
  5--148.

\bibitem{Braides2011}
Andrea Braides and Christopher~J. Larsen, \emph{{$\Gamma$}-convergence for
  stable states and local minimizers}, Ann. Sc. Norm. Super. Pisa Cl. Sci. (5)
  \textbf{X} (2011), no.~1, 193--206.

\bibitem{Braides2008}
Andrea Braides and Lev Truskinovsky, \emph{Asymptotic expansions by
  {$\Gamma$}-convergence}, Continuum Mech. Thermodyn. \textbf{20} (2008),
  no.~1, 21--62.

\bibitem{Burke2010}
Siobhan Burke, Christoph Ortner, and Endre S\"uli, \emph{An adaptive finite
  element approximation of a variational model of brittle fracture}, SIAM J.
  Numer. Anal. \textbf{48} (2010), no.~3, 980--1012.

\bibitem{Burke2011}
\bysame, \emph{Adaptive finite element approximation of the
  {F}rancfort–{M}arigo model of brittle fracture}, Approximation and
  Computation, Springer Optim. Appl., vol.~42, Springer, 2011, pp.~297--310.

\bibitem{Ciarlet1973}
P.~G. Ciarlet and P.-A. Raviart, \emph{Maximum principle and uniform
  convergence for the finite element method}, Comput. Methods Appl. Mech. Eng.
  \textbf{2} (1973), no.~1, 17--31.

\bibitem{Fleischer2001}
Isidore Fleischer and John~E. Porter, \emph{Convergence of metric space-valued
  {BV} functions}, Real Anal. Exchange \textbf{27} (2001), no.~1, 315--320.

\bibitem{Fonseca2007}
Irene Fonseca and Giovanni Leoni, \emph{Modern methods in the calculus of
  variations: {$L^p$} spaces}, Springer Monographs in Mathematics, Springer,
  New York, 2007.

\bibitem{Francfort1998}
G.A. Francfort and J.-J. Marigo, \emph{Revisiting brittle fracture as an energy
  minimization problem}, J. Mech. Phys. Solids \textbf{46} (1998), no.~8,
  1319--1342.

\bibitem{Giacomini2005}
Alessandro Giacomini, \emph{{A}mbrosio-{T}ortorelli approximation of
  quasi-static evolution of brittle fracture}, Calc. Var. Partial Dif.
  \textbf{22} (2005), no.~2, 129--172.

\bibitem{Hecht2012}
F.~Hecht, \emph{New development in freefem++}, J. Numer. Math. \textbf{20}
  (2012), no.~3-4, 251--265.

\bibitem{Knees2015}
Dorothee Knees and Matteo Negri, \emph{Convergence of alternate minimization
  schemes for phase-field fracture and damage}, Math. Models Methods Appl. Sci.
  (2017), 1--52.

\bibitem{Mielke2005}
Alexander Mielke, \emph{Evolution of rate-independent systems}, Handbook of
  Differetial Equations: Evolutionary Equations (C.M. Dafermos and E.Feireisl,
  eds.), vol.~2, Elsevier, Amsterdam, 2005, pp.~461--559.

\bibitem{Mielke2012}
Alexander Mielke, Riccarda Rossi, and Giuseppe Savar\'e, \emph{{BV} solutions
  and viscosity approximations of rate-independent systems}, ESAIM Control
  Optim. Calc. Var. \textbf{18} (2012), no.~1, 36--80. \MR{2887927}

\bibitem{Mielke2008}
Alexander Mielke, Tom\'a\v~s Roub\'\i~\v cek, and Ulisse Stefanelli,
  \emph{{$\Gamma$}-limits and relaxations for rate-independent evolutionary
  problems}, Calc. Var. Partial Dif. \textbf{31} (2008), no.~3, 387--416.
  \MR{2366131}

\bibitem{Natanson1964}
I.~P. Natanson, \emph{Theory of functions of a real variable, volume 1},
  revised ed., Frederick Ungar, 1964.

\bibitem{Negri2014}
Matteo Negri, \emph{Quasi-static rate-independent evolutions: characterization,
  existence, approximation and application to fracture mechanics}, ESAIM Contr.
  Op. Ca. Va. \textbf{20} (2014), no.~4, 983--1008.

\bibitem{Quarteroni1994}
Alfio Quarteroni and Alberto Valli, \emph{Numerical approximation of partial
  differential equations}, Springer Series in Computational Mathematics,
  vol.~23, Springer, Heidelberg, 1994.

\end{thebibliography}
\end{document}